\def\PK{\mbox{\scriptsize PK}}
\def\B{\beta}
\def\Pup{\tau^{(k)}_{m,r}}
\def\Pdown{\delta^{(k)}_{m,r}}
\numberwithin{equation}{section} \theoremstyle{plain}
\newtheorem{thm}{Theorem}[section]
\newtheorem{defin}{Definition}[section]
\newtheorem{prop}{Proposition}[section]
\newtheorem{lem}{Lemma}[section]
\newtheorem{rem}{Remark}[section]
\newtheorem{cor}{Corollary}[section]
\newtheorem{property}{Property}[section]
\newcommand{\indic}{\mathbb{I}}
\newcommand{\fst}{ F_* }
\newcommand {\epo}{ \varepsilon_0}
\newcommand {\epone}{ \varepsilon_1}
\newcommand {\eptwo}{ \varepsilon_2}
\newcommand {\epi}{ \varepsilon_i}
\newcommand {\tjs}{ T_{j}^*}
\newcommand{\br}{\mbox{$\scriptstyle{\rm br}$}}
\newcommand{\ex}{\mbox{$\scriptstyle{\rm ex}$}}
\newcommand{\BB}{B^{\br}}
\newcommand{\Bex}{B^{\ex}}
\newcommand{\la}{\lambda}
\newcommand{\muu}{\mu}
\newcommand{\ggla}{\la^{-\gamma} \Gamma_{\delta}}
\newcommand{\gamal}{\Gamma_{\delta}}
\newcommand{\fbrj}{F_j^{\br}}
\newcommand{\fbro}{F_1^{\br}}
\newcommand{\fbrt}{F_2^{\br}}
\begin{document}

\begin{frontmatter}
\title{Gibbs Partitions (EPPF's) Derived From a Stable Subordinator are Fox $H-$ and Meijer $G-$Transforms\protect}
\runtitle{Stable Poisson Kingman}

\begin{aug}
\author{\fnms{Man-Wai} \snm{Ho}\thanksref{t2}\ead[label=e1]{stahmw@nus.edu.sg}
\ead[label=u1,url]{www.stat.nus.edu.sg/$\sim$stahmw}},
\author{\fnms{Lancelot F.} \snm{James}\thanksref{t3}\ead[label=e2]{lancelot@ust.hk}
\ead[label=u2,url]{ihome.ust.hk/$\sim$lancelot} } and
\author{\fnms{John W.} \snm{Lau}
\ead[label=e3]{john.w.lau@googlemail.com}}
\ead[label=u3,url]{http://www.stats.wits.ac.za/john1.html}

\thankstext{t2}{Supported in part by National
University of Singapore research grant R-155-050-067-101
and R-155-050-067-133.}
\thankstext{t3}{Supported in part by the grant HIA05/06.BM03 of the HKSAR } \runauthor{M.-W. Ho, L.~F. James
and J.~W. Lau}

\affiliation{National University of Singapore, Hong Kong
University of Science and Technology and University of
Witwatersrand}

\address{Department of Statistics and Applied Probability \\
National University of Singapore\\
6 Science Drive 2\\
Singapore 117546\\
Republic of Singapore\\
\printead{e1}\\
\printead{u1} }

\address{The Hong Kong University of Science and Technology\\
Department of Information and Systems Management\\
Clear Water Bay, Kowloon\\
Hong Kong\\
\printead{e2}\\
\printead{u2} }

\address{Department of Statistics and Actuarial Science\\
University of Witwatersrand\\
Johannesburg WITS2050\\
South Africa\\
\printead{e3}\\
\printead{u3} }
\end{aug}

\begin{abstract}
This paper derives explicit results for the infinite Gibbs
partitions generated by the jumps of an $\alpha-$stable
subordinator, derived in Pitman~\cite{Pit02, Pit06}. We first show
that for general $\alpha$ the conditional EPPF can be represented as
ratios of Fox $H$-functions, and in the case of rational $\alpha,$
Meijer $G$-functions. This extends results for the known case of
$\alpha=1/2,$ which can be expressed in terms of Hermite functions,
hence answering an open question. Furthermore the results show that
the resulting unconditional EPPF's can be expressed in terms of $H$-
and $G$-transforms indexed by a function $h.$ Hence when $h$ is
itself a $H$- or $G$-function the EPPF is also an $H$- or
$G$-function. An implication, in the case of rational $\alpha,$ is
that one can compute explicitly thousands of EPPF's derived from
possibly exotic special functions. This would also apply to all
$\alpha$ except that computations for general Fox $H$-functions are
not yet available. However, moving away from special functions, we
demonstrate how results from probability theory may be used to
obtain calculations. We show that a forward recursion can be applied
that only requires calculation of the simplest components.
Additionally we identify general classes of EPPF's where explicit
calculations can be carried out using distribution theory.
Specifically what we call the Lamperti class and Beta-Gamma class.
As a special application, we use the latter class to obtain EPPF's
based on mixing distributions derived from the laws of of ranked
functionals of self similar Markovian excursions. The work serves
importantly a dual purpose. One is to obtain explicit calculations
for large classes of EPPF's and hence of use to the growing number
of applications involving combinatorial stochastic processes. The
other, perhaps surprising, is the introduction of new techniques for
calculating explicitly certain Fox $H$-transforms and related
quantities via probabilistic arguments.
\end{abstract}

\begin{keyword}[class=AMS]
\kwd[Primary ]{62G05} \kwd[; secondary ]{62F15}
\end{keyword}

\begin{keyword}
\kwd{beta gamma algebra, Brownian and Bessel processes, Fox
$H$- and Meijer $G$-functions, Lamperti type laws, stable
Poisson Kingman Gibbs partitions.}
\end{keyword}


\end{frontmatter}

\section{Introduction} Let $S_{\alpha},$ for $0<\alpha<1,$ denote a positive
$\alpha$-stable random variable, whose law is specified by
the Laplace transform,
$$
\mathbb{E}[{\mbox e}^{-\lambda S_{\alpha}}]={\mbox
e}^{-\lambda^{\alpha}}
$$
for $\lambda>0,$ and with density denoted as $f_{\alpha}.$ Then,
following~\cite{PPY92, PY97} it is well-known that
$$
S_{\alpha}=\sum_{i=1}^{\infty}J_{i}
$$
where $J_{1}\ge J_{2}\ge \ldots>0$ are the ranked jump sizes of a
stable subordinator
$$
S_{\alpha}(s)=\sum_{i=1}^{\infty}J_{i}\indic(U_{i}\leq s),\quad
0\leq s\leq 1, {\mbox { with }} S_{\alpha}(1)=S_{\alpha},
$$
where the $(U_{i})$ are independent random times distributed
uniformly on $(0,1)$ and also independent of the $(J_{i}).$
Furthermore, the subordinator is characterized by its L\'evy
density,
$$
\rho_{\alpha}(s)=\frac{\alpha}{\Gamma(1-\alpha)} s^{-\alpha-1}
{\mbox { for }}s>0.
$$
Following  Kingman~\cite{Kingman75} and
Perman-Pitman-Yor~\cite{PY92,PPY92, PY97},
Pitman~\cite{Pit02} discussed the laws of the ranked jumps
normalized by their random total mass $S_{\alpha}$ and
further conditioned on $S_{\alpha}=t$, that is,
$$ \mathcal{L}\left(P_{1},P_{2},
\ldots\mid S_{\alpha}=t \right)
$$
where $(P_{i})=(J_{i}/S_{\alpha}).$ The distribution is referred to
as the~(conditional) \emph{Poisson Kingman distribution with L\'evy
density} $\rho_{\alpha}$ and denoted as \PK$(\rho_{\alpha}|t)$.
Furthermore, one can create an infinite number of laws from this
construction by mixing over $t$ with any distribution $\gamma$ on
$(0,\infty).$ The law of the sequence $(P_{i})$ is then referred to
as the \emph{Poisson Kingman distribution with L\'evy density
$\rho_{\alpha}$ and mixing distribution $\gamma,$} denoted as
$$
\PK(\rho_{\alpha},\gamma)=\int_{0}^{\infty}{\mbox{\scriptsize
PK}}(\rho_{\alpha}|t)\gamma(dt).
$$

In this paper we provide explicit calculations and
interpretations for the \emph{exchangeable partition
probability function} (EPPF) which characterizes the law of
the exchangeable random partitions $\Pi_{\infty}=(\Pi_{n})$
on $\mathbb{N}$ generated by the ranked jumps of an
$\alpha$ stable subordinator, that is, the
\PK$(\rho_{\alpha},\gamma)$ partitions. Equivalently, using
\emph{Kingman's paintbox representation}~\cite{Pit97,
Gnedin06, Gnedin97, GnedinPit} this class of exchangeable
random partitions can all be constructed by a random closed
set $Z \subset[0,1]$ where $Z$ is the scaled range of an
$\alpha-$stable subordinator conditioned on its value at a
fixed time. Conditional on $t$ this construction produces
the \PK$(\rho_{\alpha}|t)$ partition, where the conditional
EPPF was derived by Pitman~\cite{Pit02}.

Our work may be divided into two parts. One is the use of special
functions and the theory of fractional calculus to help interpret
these EPPF's and in many cases to obtain explicit numerical
calculations. The other is to use some interesting probability
distribution theory to obtain explicit results for large classes of
EPPF's and which in turn yields calculations for various special
functions.

Specifically we will show that for general $\alpha$ these EPPF's may
be represented in terms of Fox $H$-functions, and for the case where
$\alpha$ takes on rational values in terms of Meijer $G$-functions.
There are several significant implications of these representations.
One is that a large number of special functions commonly appearing
in, for instance, physics, probability, finance or fractional
calculus, can be represented in terms of $H$- and $G$-functions.
See, for instance,~\cite{Hilfer,
Kilbas04,Kirbook,Mainardi03,MLP,MPS,Prudnikov90}. In addition,
because these functions are well understood this offers additional
interpretability of the relevant EPPF's. That is one is not merely
applying a numerical calculation. The case of rational $\alpha$ is
particularly interesting as calculations involving general Meijer
$G$-functions are at the heart of mathematical computer packages
such as \textit{Mathematica} and \textit{Maple.} This literally
allows one to explicitly calculate thousands of EPPF's, while in the
present literature only a few cases of explicit EPPF's are known.
However, as of yet, while in the general case of $\alpha$ we can
express many EPPF's in terms of $H$-transforms there does not exist
general mathematical packages to compute them. This brings up our
other approach which is based on probability distribution theory
associated with beta, gamma and stable random variables relying in
large part on a recent work of James~\cite{JamesLamp}, see
also~(\cite{JamesMean}), and which further relies on some results in
Perman-Pitman-Yor~\cite{PY92,PPY92, PY97} and Pitman~\cite{Pit06},
and some lesser known results for $S_{\alpha}$. In this regard, the
fact that the relevant components in the EPPF satisfy a forward
recursion, which follows from a backward equation as seen in Gnedin
and Pitman~\cite{Gnedin06}, plays an important role. As will be
discussed in Section~\ref{sec:recursion}, the recursion shows that
one need only calculate the simplest components to calculate all
components via a recursion. Specifically, it suffices to compute the
probability of having one block in a partition of integers
$\{1,\ldots,n\},$ that is,
$$
\mathbb{P}(\{1,\ldots,n\})=V_{n,1}[1-\alpha]_{n-1},
$$
where $V_{n,1}$ is the quantity that needs to be computed, and the
other notation will be explained shortly. We should note that even
$V_{n,1}$ was thought not to be easily calculated, however we will
demonstrate that this can be done quite readily. We will also, in
many cases, be able to give explicit expressions for the more
complex quantities. This also sets up some interesting
relationships between various special functions, integral
transforms, and probabilities.

We provide a discussion and definition of Fox $H$- and
Meijer $G$-functions in the appendix, which is obtained
from various sources. We now proceed to address some
preliminaries and present a more specific outline.

\subsection{Preliminaries}
Again from Pitman~\cite{Pit02}, for $\Pi_{n}$ an exchangeable
partition of $\{1,2,\ldots,n\}$ and a particular partition
$\{A_{1},\ldots,A_{k}\}$ of $\{1,2,\ldots,n\}$ with
$|A_{i}|=n_{i}$ for $1\leq i\leq k,$ where $n_{i}\ge 1$ and
$\sum_{i=1}^{k}n_{i}=n,$ then letting,
$$
[a]_n = a (a+1) (a+2) \cdots (a+n-1) =
\frac{\Gamma(a+n)}{\Gamma(a)},
$$
the conditional EPPF, associated with the
\PK$(\rho_{\alpha}|t)$ partition, is defined as $
p_{\alpha}(n_{1},\ldots,n_{k}|t)=\mathbb{P}(\Pi_{n}=\{A_{1},\ldots,A_{k}\}|t),
$ where
\begin{equation}\label{PitEPPF}
p_{\alpha}(n_{1},\ldots,n_{k}|t)=
\mathbb{G}_{\alpha}^{(n,k)}(t)\prod_{j=1}^{k}[1-\alpha]_{n_{j}-1}
\end{equation}
and one can write
\begin{eqnarray}
\label{bigG}
\mathbb{G}_{\alpha}^{(n,k)}(t) &=&
\frac{\alpha^{k}}{t^{n}\Gamma(n-k\alpha)f_{\alpha}(t)}
\left[\int_{0}^{t}f_{\alpha}(t-v)v^{n-k\alpha-1}dv\right]
\\\nonumber
&=&\frac{\alpha^{k}t^{-k\alpha}}{\Gamma(n-k\alpha)f_{\alpha}(t)}
\left[\int_{0}^{1}f_{\alpha}(tu){(1-u)}^{n-k\alpha-1}du\right].
\end{eqnarray}
Using the terminology in~\cite{Gnedin06}, call the
\PK$(\rho_{\alpha}|t)$ partitions the
$(\alpha|t)$-partitions.

Now suppressing dependence on $\alpha$ and $\gamma,$ for
each $n$ and $k,$ set
$$
V_{n,k}=\int_{0}^{\infty}\mathbb{G}_{\alpha}^{(n,k)}(t)\gamma(dt).
$$
Pitman~\cite{Pit02} shows that the EPPF of the
\PK$(\rho_{\alpha},\gamma)$ partition is given by
\begin{equation}
\label{EPPFu}
p_{\alpha,\gamma}(n_{1},\ldots,n_{k})=V_{n,k}\prod_{j=1}^{k}[1-\alpha]_{n_{j}-1}.
\end{equation}
Note that by setting $\gamma$ to be point mass at $t,$~(\ref{EPPFu})
equates with~(\ref{PitEPPF}). The most well-known member of this
class is the case where for $\theta>-\alpha,$ $\gamma$ corresponds
to the distribution of the random variable $S_{\alpha,\theta}$
having density
\begin{equation}
f_{S_{\alpha,\theta}}(t)=\frac{\Gamma(\theta+1)}{\Gamma(\theta/\alpha+1)}t^{-\theta}f_{\alpha}(t)
\label{stabletemp}
\end{equation}
and satisfies for $\delta+\theta>-\alpha$,
$$
\mathbb{E}[S^{-\delta}_{\alpha,\theta}]=\frac{\Gamma(\theta+1)}{\Gamma(\theta/\alpha+1)}
\mathbb{E}[S^{-(\delta+\theta)}_{\alpha}]=\frac{\Gamma(\frac{(\theta+\delta)}{\alpha}+1)}
{\Gamma({\theta+\delta}+1)}\frac{\Gamma(\theta+1)}{\Gamma(\theta/\alpha+1)}.
$$
Note $S_{\alpha,0}\overset{d}=S_{\alpha}.$ Furthermore, the random
variables satisfy the remarkable identity that may be found in
Pitman~\cite[section 4.2]{Pit06} and Perman, Pitman and
Yor~\cite{PPY92}. That is, for any $\theta>-\alpha$,
\begin{equation}
\frac{1}{S_{\alpha,\theta}}\overset{d}=\frac{\beta_{\theta+\alpha,1-\alpha}}{S_{\alpha,\theta+\alpha}},
\label{key2}
\end{equation}
where $\beta_{\theta+\alpha,1-\alpha}$ is a
beta$(\theta+\alpha,1-\alpha)$ random variable independent of
$S_{\alpha,\theta}.$ Note that the identity~(\ref{key2}) plays an
important role in the recent work of~\cite{JamesLamp} and hence,
through that work, will play a prominent role here. That is,
using~(\ref{stabletemp}) as the mixing density gives the EPPF of
the two parameter $(\alpha,\theta)$ Poisson-Dirichlet
distribution, say, $\mbox{\scriptsize{PD}}(\alpha,\theta),$ given
by
\begin{equation}
p_{\alpha,\theta}(n_{1},\ldots,n_{k})=\frac{\prod_{l=1}^{k}(\theta+l\alpha)}
{[\theta+1]_{n-1}}\prod_{j=1}^{k}[1-\alpha]_{n_{j}-1}.
\label{PitmanEwens}
\end{equation}
The quantity in (\ref{PitmanEwens}) extends beyond the stable case,
as it is also defined for the cases of $\alpha=0$ and $\theta>0$
corresponding to the famous result related to the Dirichlet process,
otherwise known as the Chinese restaurant process, or Ewens
$(0,\theta)$-partitions. The other possibility is the case of
$-\infty\leq \alpha<0$ and $\theta=m|\alpha|,$ for $m=1,2\ldots,$
referred to as $(\alpha,|\alpha|m)$ partitions. The
\mbox{\scriptsize{PD}}$(\alpha,\theta)$ plays an important role in a
variety of diverse applications. See Pitman~\cite{Pit06} for a
general overview and set of references, and in particular, its
relation to Bessel and Brownian phenomena. See
Bertoin~\cite{BerFrag} for its role in terms of fragmentation and
coagulation phenomena. See Ishwaran and
James~\cite{IshwaranJames2001, IshwaranJames2003} and
Pitman~\cite{Pit96} for some applications in Bayesian statistics.

A remarkable fact, see Gnedin and Pitman~\cite[Theorem 12]{Gnedin06}
and Pitman~\cite[Theorem 4.6, p. 86]{Pit06}, is that the EPPF's
generated by (\ref{PitEPPF}), that is (\ref{EPPFu}), and mixtures of
the Ewens $(0,\theta)$-partitions and $(\alpha,|\alpha|m)$
partitions, constitute the only infinite EPPF's having Gibbs form,
that is, infinite EPPF's of the form
$$
c_{n,k}\prod_{j=1}^{k}w_{n_{j}}.
$$
This, as discussed in regards to the
\mbox{\scriptsize{PD}}$(\alpha,\theta)$ family, has potential
implications both from a practical and theoretical point of view in
a variety of disciplines. In particular, the $(\alpha|t)$-partitions
constitute the largest and most diverse of such classes. However,
there are only a few examples where $V_{n,k}$ has been computed.
Besides the \mbox{\scriptsize{PD}}$(\alpha,\theta)$ case, there are
also the models formed by taking $\gamma$ as a density proportional
to ${\mbox e}^{-bt}f_{\alpha}(t)$. In addition, Pitman~\cite[section
8]{Pit02} and Pitman~\cite[section 4.5, p.90]{Pit06} show that
conditional EPPF of the $(1/2|t^{-2})$-partition, corresponding to
the \emph{Brownian excursion partition}, is such that
$\mathbb{G}_{1/2}^{(n,k)}(t)$ can be expressed in terms of Hermite
functions~\cite[section 10.2]{Lebedev72}. This explicit result is
due in part to the fact that $S_{1/2}$ is equivalent in distribution
to an inverse gamma distribution with shape $1/2,$ and hence in
contrast to the case of general $S_{\alpha}$ has a simple explicit
density. However, clearly, given the fact that $\gamma$ may be quite
arbitrary, it is not enough to simply know the explicit form of the
density of $f_{\alpha}.$

\subsection{Goals and Outline}
Faced with this our goal becomes quite clear. Find methods
to explicitly calculate and hopefully provide further
interpretation of the quantities
$\mathbb{G}_{\alpha}^{(n,k)}(t)$ and $V_{n,k}$ for general
$\alpha.$ We wish to emphasize that we are not interested
in merely suggesting crude numerical methods which do not
have interpretability.

Our first task will be to provide an answer to a question
posed by Pitman~\cite[Problem 4.3.3,  p. 87]{Pit06}, which
goes beyond merely wanting a numerical calculation. We
paraphrase it as follows,

\begin{quote}
Pitman~\cite[section 4.5, eq. (4.59) and (4.67)]{Pit06}
shows that in the case of $\alpha=1/2$, the integral
$\mathbb{G}_{\alpha}^{(n,k)}(t)$ can be simply expressed in
terms of an entire function of a complex variable, the
\emph{Hermite function}, which has been extensively
studied. It is natural to ask whether
$\mathbb{G}_{\alpha}^{(n,k)}(t)$ might be similarly
represented in terms of some entire functions with a
parameter $\alpha,$ which reduces to the Hermite function
for $\alpha=1/2.$
\end{quote}

In sections~\ref{sec:condEPPF} and~\ref{sec:MeijerG} of
this paper we provide an answer to this question by showing
that $\mathbb{G}_{\alpha}^{(n,k)}(t)$ can be expressed as
the ratio of Fox $H$-functions in the case of general
$\alpha$ and Meijer $G$-functions in the case where
$\alpha$ is rational. In particular, in
section~\ref{sec:hermite}, we recover the case of the
Hermite function based on the calculus of Meijer
$G$-functions when $\alpha=1/2$, and show that for general
rational $\alpha$ these may be expressed as ratios of sums
of generalized hypergeometric functions. Additionally, in
those 2 sections we show that
$\mathbb{G}_{\alpha}^{(n,k)}(t)$ can be expressed in terms
of densities derived from $S_{\alpha,k\alpha}$ and
corresponding beta random variables. In
section~\ref{sec:uncondEPPF} we obtain results for the
unconditional Gibbs models, that is, calculations for
$V_{n,k}$. In section\ref{sec:Grational} we show that one
may use the calculus of Meijer $G$-functions to express
many $V_{n,k}$ in terms of $G$-functions which are then
readily computable. Sections~\ref{sec:ModBessel}
and~\ref{sec:genhyper} provide new specific examples of
EPPF's. Section~\ref{sec:distSX} represents our first real
departure from calculations of EPPF's based on the theory
of Meijer $G$-functions. In particular, we demonstrate that
one can obtain calculations for all values of $\alpha$
using random variables connected to
Lamperti~\cite{Lamperti}, which have been recently studied
in James~\cite{JamesLamp}. Interesting special examples,
which can be computed by various other means, are given in
sections~\ref{sec:2F1} and~\ref{sec:3F2}.
Section~\ref{sec:recursion}, albeit short, is a pivotal
section as it demonstrates the important role of recursion
formulae. This, as mentioned earlier, shows that one only
need to focus on calculation of the $V_{n,1}$ terms in
order to obtain the general $V_{n,k}.$ As a side note, we
believe that even in the case where $V_{n,k}$ may be
represented in terms of $G$-functions and therefore
computable, it might be more efficient to use the recursion
when one is interested in problems potentially involving
the calculation of all $V_{n,k}$ for $k=1,\ldots, n$ and
$n=1,2,\ldots.$ Such problems occur, for instance, in the
implementation of generalized Chinese restaurant schemes as
can be seen in Ishwaran and
James~\cite{IshwaranJames2003}~(see also
Pitman~\cite[section 3.1]{Pit06}. In those cases $n$
represents sample size of data and can be in the thousands.
In addition the recursion can be used to obtain new
recursive relationships for various quantities, we will
demonstrate that in section~\ref{sec:EPPFMar}. In
section~\ref{sec:EPPFprobtran}, we describe purely
probabilistic methods to calculate $V_{n,1}$ and in fact
obtain expression for general $V_{n,k}.$  We point out
again that there are very few explicit examples of EPPF's
so it is rather striking that we will now show how to
obtain many of them. Section~\ref{sec:Lamperti} introduces
what we call the \emph{Lamperti class} which can be seen as
an extension of section~\ref{sec:distSX}.
Section~\ref{sec:BetaGamma} introduces what we call the
\emph{Beta-Gamma class}. Remarkably this class shows that
$V_{n,k}$ may be expressed in terms of expectations
involving only beta and gamma random variables.
Section~\ref{sec:EPPFMar} represents a non-trivial
application of Proposition 7.4 which exploits the quite
broad results in Pitman and Yor~\cite{PY01}.

\begin{rem} Throughout we will write $G_{\delta}$ to represent a
gamma$(\delta,1)$ random variable and $\B_{a,b}$ to represent a
beta$(a,b)$ random variable. Furthermore, unless otherwise stated,
we will assume that when we write products of random variables, that
means the individual random variables are independent.
\end{rem}

\section{Conditional EPPF}\label{sec:condEPPF}
From Schneider~\cite{Schneider86}~(see also
\cite{Mainardi03, Metzler00}), one may represent the
density of $S_{\alpha}$ in terms of an $H$-function as
follows.

\begin{equation}\label{stableH}
f_{\alpha}(t) = \frac{1}{\alpha} H_{2,2}^{1,1} \left[ \; t \left|
\begin{matrix}
\left(1-\frac{1}{\alpha},\frac{1}{\alpha}\right),(0,1)\\ \\
(0,1),(0,1)
\end{matrix}\; \right.\right],\qquad t>0.
\end{equation}
Applying~(\ref{AppHlower2}) in the appendix, one can write
\begin{equation}\label{stablesimp}
f_{\alpha}(t) = \frac{1}{\alpha} H_{1,1}^{0,1} \left[ \; t \left|
\begin{matrix} (1-\frac{1}{\alpha},\frac{1}{\alpha})
\\ \\(0,1)  \end{matrix}\; \right.\right],
\qquad t>0,
\end{equation}
and use this to describe $\mathbb{G}_{\alpha}^{(n,k)}(t).$

\begin{thm}\label{thm1condEPPF}The function
$$
\mathbb{G}_{\alpha}^{(n,k)}(t)
$$
defined by~(\ref{bigG}) appearing in~(\ref{PitEPPF}) can be
expressed as follows.
\begin{enumerate}
\item[(i)]$\mathbb{G}^{(n,k)}_{\alpha}(t)$ is representable in
terms of ratios of probability densities as,
$$
\mathbb{G}^{(n,k)}_{\alpha}(t)=\frac{\alpha^{k-1}\Gamma(k)}
{\Gamma(n)}\frac{\tilde{f}_{\alpha,(n,k)}(t)}{f_{\alpha}(t)},
$$
where $\tilde{f}_{\alpha,(n,k)}$ denotes the density of the
random variables
\begin{equation}
\dfrac{S_{\alpha,k\alpha}}{\B_{k\alpha,n-k\alpha}}
\overset{d}=\frac{S_{\alpha,(k-1)\alpha}}{\B_{(k-1)\alpha+1,n-1-(k-1)\alpha}},
\label{equaldist}
\end{equation}
for $k=1,\ldots,n.$ In particular, for $k=1$,
\begin{equation}
\frac{S_{\alpha}}{\B_{1,n-1}}\overset{d}=\dfrac{S_{\alpha,\alpha}}{\B_{\alpha,n-\alpha}}.
\label{equaldist2}
\end{equation}
\item[(ii)]For all $0<\alpha<1,$ $\mathbb{G}^{(n,k)}_{\alpha}(t)$ is
expressible as the ratio of Fox $H$-functions,
$$
\mathbb{G}^{(n,k)}_{\alpha}(t)=\alpha^{k}
\frac{H_{1,1}^{0,1} \left[ \; t \left|
\begin{matrix} (1-\frac{1}{\alpha}-k,\frac{1}{\alpha})
\\ \\(-n,1)  \end{matrix}\; \right.\right]} {H_{1,1}^{0,1} \left[ \; t \left|
\begin{matrix} (1-\frac{1}{\alpha},\frac{1}{\alpha})
\\ \\(0,1)  \end{matrix}\; \right.\right]}.
$$
\end{enumerate}
\end{thm}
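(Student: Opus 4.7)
The plan is to prove part (i) by a direct density calculation for the ratio of random variables, and part (ii) by applying the Euler-type integral transform for Fox $H$-functions to the second form of $\mathbb{G}^{(n,k)}_{\alpha}(t)$ in (\ref{bigG}).

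For part (i), I would set $Z := S_{\alpha,k\alpha}/\beta_{k\alpha,n-k\alpha}$ (with independent factors) and compute its density via the ratio formula
\[
f_Z(t)=\int_0^1 f_{S_{\alpha,k\alpha}}(ty)\,f_{\beta_{k\alpha,n-k\alpha}}(y)\,y\,dy.
\]
Substituting the tilted density (\ref{stabletemp}) with $\theta=k\alpha$, the powers of $y$ in $(ty)^{-k\alpha}\cdot y^{k\alpha-1}\cdot y$ collapse to $t^{-k\alpha}$, leaving $\int_0^1 f_\alpha(ty)(1-y)^{n-k\alpha-1}dy$ inside the integrand. Matching this against the second integral form in (\ref{bigG}) and tidying up the constants (using $\Gamma(k\alpha+1)=k\alpha\Gamma(k\alpha)$, $B(k\alpha,n-k\alpha)=\Gamma(k\alpha)\Gamma(n-k\alpha)/\Gamma(n)$, and $\Gamma(k+1)=k\Gamma(k)$) yields exactly $\mathbb{G}^{(n,k)}_{\alpha}(t)=\alpha^{k-1}\Gamma(k)\Gamma(n)^{-1}\,f_Z(t)/f_\alpha(t)$. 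For the distributional identity (\ref{equaldist}), I invoke the key identity (\ref{key2}) at $\theta=(k-1)\alpha$ to write $S_{\alpha,(k-1)\alpha}\overset{d}= S_{\alpha,k\alpha}/\beta_{k\alpha,1-\alpha}$, reducing the claim to the product identity
\[
\beta_{k\alpha,1-\alpha}\,\beta_{(k-1)\alpha+1,\,n-1-(k-1)\alpha}\overset{d}= \beta_{k\alpha,n-k\alpha},
\]
which is a direct application of the beta-beta rule $\beta_{a,b}\beta_{a+b,c}\overset{d}=\beta_{a,b+c}$ with $a=k\alpha$, $b=1-\alpha$, $c=n-1-(k-1)\alpha$, noting $a+b=(k-1)\alpha+1$ and $b+c=n-k\alpha$.

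For part (ii), I would insert the $H$-function representation (\ref{stablesimp}) for $f_\alpha(tu)$ into the second integral in (\ref{bigG}) and apply the Euler-type beta integral for Fox $H$-functions, which appends a top parameter $(0,1)$ and a bottom parameter $(k\alpha-n,1)$ to encode the weight $(1-u)^{n-k\alpha-1}$. This produces $\frac{\Gamma(n-k\alpha)}{\alpha}H^{0,2}_{2,2}\!\left[t\,\big|\,(0,1),(1-1/\alpha,1/\alpha);(0,1),(k\alpha-n,1)\right]$, and the $\Gamma(n-k\alpha)$ cancels the one in (\ref{bigG}). Next I absorb the $t^{-k\alpha}$ prefactor via the shift rule $t^{a}H^{m,n}_{p,q}[t\mid(a_i,A_i);(b_j,B_j)]=H^{m,n}_{p,q}[t\mid(a_i+aA_i,A_i);(b_j+aB_j,B_j)]$ with $a=-k\alpha$. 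This turns the top pair $(0,1)$ into $(-k\alpha,1)$ and the bottom pair $(0,1)$ into $(-k\alpha,1)$; by the reduction identity (\ref{AppHlower2}), this common pair cancels, and the remaining two pairs become exactly $(1-1/\alpha-k,1/\alpha)$ and $(-n,1)$ after the shift. Dividing by $f_\alpha(t)$ from (\ref{stablesimp}) then absorbs an extra factor of $\alpha$, converting the $\alpha^{k-1}$ from part (i) to the stated $\alpha^k$.

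The main technical obstacle is the bookkeeping in part (ii): verifying that the shift lands precisely on the pair $(-k\alpha,1)$ that will be cancelled, and that the index conditions required for both the Euler transform and the reduction identity (\ref{AppHlower2}) are met on the relevant Mellin-Barnes contour (so no poles of the integrand are crossed). Once this bookkeeping is laid out, the manipulations are mechanical consequences of the $H$-function calculus collected in the appendix.
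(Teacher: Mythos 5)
Your proof is correct and follows essentially the same route as the paper: part (i) is the identical density computation combined with the beta--beta factorization $\B_{k\alpha,n-k\alpha}\overset{d}=\B_{k\alpha,1-\alpha}\B_{(k-1)\alpha+1,n-1-(k-1)\alpha}$ and the identity~(\ref{key2}) at $\theta=(k-1)\alpha$, while part (ii) is the same $H$-function calculus (beta/Euler integral, shift by $t^{-k\alpha}$, cancellation of a common pair), merely starting from the reduced representation~(\ref{stablesimp}) and integrating before shifting rather than the reverse. One trivial correction: the $H^{0,2}_{2,2}$ you obtain has $m=0$, so the common pair $(-k\alpha,1)$ is removed by~(\ref{AppHlower1}) (after using the symmetry of the first $n$ upper and last $q-m$ lower pairs), not by~(\ref{AppHlower2}).
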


\begin{proof} Let us proceed by first deriving the density of
$S_{\alpha,k\alpha}/\B_{k\alpha,n-k\alpha}.$ First for
clarity we use the fact that $\B_{k\alpha,n-k\alpha}$ has
density,
$$
\frac{\Gamma(n)}{\Gamma(k\alpha)\Gamma(n-k\alpha)}u^{k\alpha-1}{(1-u)}^{n-k\alpha-1},
$$
for $0<u<1.$ Now setting $\theta=k\alpha$ in~(\ref{stabletemp}),
we see that the usual operations to obtain the density involves
$$
[u^{-k\alpha+1}t^{-k\alpha}]f_{\alpha}(tu)u^{k\alpha-1}{(1-u)}^{n-k\alpha-1}.
$$
Hence due to the above cancelations, the density is
$$
\tilde{f}_{\alpha,(n,k)}(t)=\frac{\Gamma(n)\alpha}{\Gamma(k)\Gamma(n-k\alpha)}
t^{-k\alpha}\int_{0}^{1}f_{\alpha}(tu){(1-u)}^{n-k\alpha-1}du.
$$
Now it follows that,
$$
\B_{k\alpha,n-k\alpha}\overset{d}=\B_{k\alpha+1-\alpha,n-k\alpha-1+\alpha}\B_{k\alpha,1-\alpha}
$$
Furthermore, setting $\theta=k\alpha$ in~(\ref{key2}), one
gets
$$
\frac{S_{\alpha,k\alpha}}{\B_{k\alpha,1-\alpha}}\overset{d}=S_{\alpha,k\alpha-\alpha}.
$$
These two points yield the equivalence in~(\ref{equaldist})
and~(\ref{equaldist2}).

In order to establish statement~(ii), first write
$$
\mathbb{H}^{(n,k)}_{\alpha}( t)=\frac{
(tu)^{-k\alpha}}{\Gamma(n-k\alpha)}
 \int_{0}^{1}f_{\alpha}(tu) u^{k\alpha} (1-u)^{n-k\alpha-1}du.
$$
Substituting the expression for $f_{\alpha}(ut)$
with~(\ref{stableH}), and then using~$(\ref{AppHshift})$ in
the appendix to obtain an expression for
$(tu)^{-k\alpha}f_{\alpha}(ut)$, one sees that
\begin{eqnarray}
\mathbb{H}^{(n,k)}_{\alpha}( t) &\equiv&
\frac{1}{\alpha\Gamma(n-k\alpha)} \int_{0}^{1} u^{k\alpha}
(1-u)^{n-k\alpha-1}\nonumber\\
&& \qquad \qquad\qquad\times H_{2,2}^{1,1} \left[ \; tu \left|
\begin{matrix} (1-\frac{1}{\alpha}-k,\frac{1}{\alpha}),
(-k\alpha,1)
\\ \\(-k\alpha,1), (-k\alpha,1)  \end{matrix}\; \right.\right]
du\nonumber\\
&=& \frac{1}{\alpha} H_{1,1}^{0,1} \left[ \; t \left|
\begin{matrix} (1-\frac{1}{\alpha}-k,\frac{1}{\alpha})
\\ \\(-n,1)  \end{matrix}\; \right.\right],\label{Htransform}
\end{eqnarray}
where the last equality follows from identity 2 in~\cite[p.
355]{Prudnikov90} and some manipulations based
on~(\ref{AppHzero1}),~(\ref{AppHlower1}) and~(\ref{AppHlower2}). The
result is concluded by applying the
representation~(\ref{stablesimp}).
\end{proof}

Statement~(i) of Theorem~\ref{thm1condEPPF} provides a
probabilistic interpretation of
$\mathbb{G}^{(n,k)}_{\alpha}(t)$. We shall see that, in
particular, the distributional identity~(\ref{equaldist2})
will play a crucial role in applying probabilistic
arguments to calculations based on
$\mathbb{G}^{(n,k)}_{\alpha}(t)$. Statement~(ii) expresses
$\mathbb{G}^{(n,k)}_{\alpha}(t)$ in terms of Fox
$H$-functions, which, among other things, allows one to
make precise interpretations of it. Overall,
Theorem~\ref{thm1condEPPF} sets up a myriad of duality
relationships between probabilistic quantities based on
stable and beta random variables and a very large class of
special functions. In particular, as we will show, one can
use statement~(i) to obtain explicit calculations for
various Fox $H$-functions which are not yet readily
computable by other means. Now when $\alpha=m/r,$ the next
result shows that expressions in statement~(ii) reduces to
ratios of Meijer $G$-functions. The significance being
there is that calculations of Meijer $G$-functions are
readily available in \textit{Mathematica} and other
mathematical softwares.

\section{Rational Values, Products of Beta and Gamma Random
variables and Meijer $G$-Functions}\label{sec:MeijerG} When
$\alpha=m/r$ for integers $1\leq m<r,$  the stable random
variable $S_{m/r}$ can be represented in terms of
independent beta and gamma random variables as follows.
$\left(\dfrac{m}{S_{\frac{m}{r}}}\right)^m \stackrel{d}{=}
r^r Z_{m,r}$, where
$$
Z_{m,r} \overset{d}= \left(\prod_{i=1}^{m-1}
\B_{\frac{i}{r},\frac{i}{m}-\frac{i}{r}}\right) \left(
\prod_{j=m}^{r-1} G_{\frac{j}{r}}\right).
$$
This result may be found in Chaumont and
Yor~\cite[p.113]{Chaumont}~(see also~\cite[section
6]{JamesLamp}). Now, by Theorem~9 in Springer and
Thompson~\cite[p.733]{Springer70}, the density of $Z_{m,r}$
can be represented in terms of Meijer $G$-functions as
follows,
\begin{equation}
f_{Z_{m,r}}(t) = K_{m/r} G^{r-1,0}_{m-1,r-1}\left(t
 \left|
\begin{matrix} (\frac{i}{m}-1)_{1}^{m-1}
\\ \\(\frac{j}{r}-1)_{1}^{r-1} \end{matrix} \right.
\right),
\end{equation}
where
\begin{equation}\label{K}
K_{m/r} = \prod_{i=1}^{m-1}
\frac{\Gamma\left(\frac{i}{m}\right)}{\Gamma\left(\frac{i}{r}\right)}
\prod_{j=m}^{r-1} \frac{1}{\Gamma\left(\frac{j}{r}\right)},
\end{equation}
Hence the density of $S_{\frac{m}{r}}$ in $s$ is given by
\begin{eqnarray}
&&f_{Z_{m,r}}\left(\frac{m^m}{r^r s^m}\right) \times
\left|-\frac{m^{m+1}}{r^r s^{m+1}}\right|\nonumber\\
&&\qquad= K_{m/r} r^{\frac{r}{m}}G_{r-1,m-1}^{0,r-1}
\left(\left(\dfrac{r^{r}}{m^{m}}\right)s^m
 \left|
\begin{matrix} (1-\frac{1}{m}-\frac{i}{r})_{1}^{r-1}
\\ \\(1-\frac{1}{m}-\frac{j}{m})_{1}^{m-1}\end{matrix} \right.
\right)\label{Smr},
\end{eqnarray}
by absorbing the term $\left(m^m r^{-r} s^{-m}\right)^{(m+1)/m}$
using~(\ref{AppHshift}) and then applying~(\ref{AppHanalytic}). Note
one could have used the result of \cite{Zolotarev94} to
obtain~(\ref{Smr}). However that result does not equate $S_{m/r}$
with beta and gamma random variables.

Now define the vectors,
\begin{equation}\label{Pup}
\Pup =
\left(\left(1-\frac{1}{m}-\frac{i+k}{r}\right)_{1}^{r-1},
\left(\frac{i-1}{m}-\frac{k}{r}\right)_{1}^{m}\right)
\end{equation}
and
\begin{equation}\label{Pdown}
\Pdown =
\left(\left(1-\frac{1}{m}-\frac{j}{m}-\frac{k}{r}\right)_{1}^{m-1},
\left(\frac{j-1-n}{m}\right)_{1}^{m}\right).
\end{equation}

\begin{thm}\label{thm2condEPPF}
Let $m,r$ denote integers such that $1\leq m<r$. Then, for
$\alpha=m/r$,~(\ref{bigG}) is expressible as
$$
\mathbb{G}^{(n,k)}_{\frac{m}{r}}(t)=
\frac{m^{k-n}G_{m+r-1,2m-1}^{0,m+r-1}
\left(\left(\dfrac{r^{r}}{m^{m}}\right)t^m
 \left|
\begin{matrix} \Pup
\\ \\ \Pdown \end{matrix} \right.
\right)}{G_{r-1,m-1}^{0,r-1}
\left(\left(\dfrac{r^{r}}{m^{m}}\right)t^m
 \left|
\begin{matrix} (1-\frac{1}{m}-\frac{i}{r})_{1}^{r-1}
\\ \\(1-\frac{1}{m}-\frac{j}{m})_{1}^{m-1} \end{matrix} \right.
\right)},
$$
or
$$
\mathbb{G}^{(n,k)}_{\frac{m}{r}}\left(\left(\frac{m}{r^{\frac{r}{m}}}\right)s^{\frac{1}{m}}\right)=
\frac{m^{k-n}G_{m+r-1,2m-1}^{0,m+r-1} \left(s
 \left|
\begin{matrix} \Pup
\\ \\ \Pdown \end{matrix} \right.
\right)}{G_{r-1,m-1}^{0,r-1} \left(s
 \left|
\begin{matrix} (1-\frac{1}{m}-\frac{i}{r})_{1}^{r-1}
\\ \\(1-\frac{1}{m}-\frac{j}{m})_{1}^{m-1} \end{matrix} \right.
\right)}.
$$
\end{thm}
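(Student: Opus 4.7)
My plan is to proceed by analogy with the proof of Theorem~\ref{thm1condEPPF}(ii) but with the Fox $H$-representation~(\ref{stableH}) of $f_{\alpha}$ replaced by the Meijer $G$-representation~(\ref{Smr}) of $f_{m/r}$, which is available precisely because $\alpha=m/r$ is rational. Starting from the second line of~(\ref{bigG}), I would substitute~(\ref{Smr}) for $f_{m/r}(tu)$ inside the integral and use the shift identity~(\ref{AppHshift}) to absorb the factor $u^{m}$ from the $G$-function argument into its parameters. The resulting integral is the Eulerian convolution of a Meijer $G$-function with the weight $(1-u)^{n-km/r-1}$, which can be evaluated in closed form as another Meijer $G$-function by the standard integral identity (of the same flavor as identity~2 in Prudnikov~\cite[p.~355]{Prudnikov90} invoked in the proof of Theorem~\ref{thm1condEPPF}), leaving an $H$-function with rational parameter ratio.

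The second step is to reduce this $H$-function with the parameter $r/m$ to a Meijer $G$-function via the Gauss--Legendre multiplication formula
$$
\Gamma(mz)=(2\pi)^{(1-m)/2}m^{mz-1/2}\prod_{j=0}^{m-1}\Gamma\!\left(z+\tfrac{j}{m}\right),
$$
applied both to the beta-type factor $\Gamma(n-km/r)$ produced by the integration and to any Gamma with argument linear in $ms$ introduced by the $u^{m}$ rescaling. Each splitting produces exactly $m$ new Gamma factors shifted by $j/m$: this accounts for the two new blocks of $m$ parameters in the theorem, namely $(\tfrac{i-1}{m}-\tfrac{k}{r})_{1}^{m}$ in $\Pup$ and $(\tfrac{j-1-n}{m})_{1}^{m}$ in $\Pdown$. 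The original $r-1$ upper and $m-1$ lower parameters of the $f_{m/r}$ representation in~(\ref{Smr}) get translated by $-k/r$ during the integration, producing the remaining two blocks $(1-\tfrac{1}{m}-\tfrac{i+k}{r})_{1}^{r-1}$ and $(1-\tfrac{1}{m}-\tfrac{j}{m}-\tfrac{k}{r})_{1}^{m-1}$. The upper index count $m+r-1$ and lower index count $2m-1$ of the numerator $G$-function match this tally.

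The last step is to form the ratio with $f_{m/r}(t)$ written via~(\ref{Smr}) and to simplify constants. The common $K_{m/r}r^{r/m}$ prefactor cancels from numerator and denominator, and what remains combines $\alpha^{k}=(m/r)^{k}$, $t^{-km/r}$, $\Gamma(n-km/r)^{-1}$, and the numerical debris $(2\pi)^{\ast}m^{\ast}r^{\ast}$ left by the multiplication formula; these should telescope to the single factor $m^{k-n}$. The second form in the theorem follows by the scaling substitution $s=(r^{r}/m^{m})\,t^{m}$ applied to both $G$-functions simultaneously. The main obstacle I expect is purely one of bookkeeping: verifying that the accumulated $m$-, $r$-, and $2\pi$-powers collapse exactly to $m^{k-n}$, and confirming via the reductions~(\ref{AppHlower1})--(\ref{AppHlower2}) that no spurious or cancellable parameter pairs remain so that the final parameter vectors coincide with $\Pup$ and $\Pdown$ as defined in~(\ref{Pup})--(\ref{Pdown}). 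A useful cross-check is to apply the same Gauss multiplication argument directly to the Fox $H$-ratio of Theorem~\ref{thm1condEPPF}(ii) at $\alpha=m/r$; this independent route should yield the identical $G$-function ratio.
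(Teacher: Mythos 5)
Your outline follows essentially the same route as the paper's proof: substitute the Meijer $G$-representation~(\ref{Smr}) of $f_{m/r}$ into~(\ref{bigG}), evaluate the Euler integral $\int_0^1 f_{m/r}(tu)(1-u)^{q-1}\,du$ with $q=n-km/r$ as a single $G$-function, absorb the factor $t^{-km/r}$ via~(\ref{AppHshift}), and take the ratio against~(\ref{Smr}); the paper obtains the key integral in one step by citing Corollary~\ref{AppGcor1}, whose proof is precisely the Gauss--Legendre multiplication argument you propose to carry out by hand, and the constants do collapse to $m^{k-n}$ as you predict. One small correction: the factor $u^{m}$ sitting \emph{inside} the argument of the $G$-function is handled by the power property~(\ref{AppHpower}) (or directly in the Mellin--Barnes integral), not by the shift identity~(\ref{AppHshift}), which only absorbs a multiplicative prefactor $z^{\sigma}$.
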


\begin{proof}
From statement~(i) in Theorem~\ref{thm1condEPPF} it suffices to
obtain the density of $S_{m/r,km/r}\times \B_{km/r,n-km/r}.$ Using
the expression for $f_{m/r}$ in~(\ref{Smr}), we note that a solution
for the integral $ \int_{0}^{1}f_{m/r}(tu)(1-u)^{q-1}du$, for $q>0$,
is obtained from Corollary~\ref{AppGcor1} as
$$
\frac{K_{m/r} r^{\frac{r}{m}}\Gamma(q)}{m^q}
G_{m+r-1,2m-1}^{0,m+r-1} \left(\left(\frac{r^{r}}{m^{m}}\right)t^m
 \left|
\begin{matrix} (1-\frac{1}{m}-\frac{i}{r})_{1}^{r-1},
(\frac{i-1}{m})_{1}^{m}
\\ \\(1-\frac{1}{m}-\frac{j}{m})_{1}^{m-1},(\frac{j-1-q}{m})_{1}^{m}
\end{matrix} \right.
\right).
$$
Hence, absorbing the term $t^{-km/r}$ into the above expression
with $q=n-km/r$ by~(\ref{AppHshift}) and taking the ratio of the
resulting expression and~(\ref{Smr}) gives
$\mathbb{G}^{(n,k)}_{m/r}(t)$. The second result follows from
simple algebra.
\end{proof}

We next show how to recover the result of
Pitman~\cite[Corollary 4.11, p. 93]{Pit06} or~\cite{Pit02}
in the case of $m=1$ and $r=2$, and show that the other
rational cases of $\alpha$ may be expressed in terms of
generalized hypergeometric functions.

\subsection{Hermite and generalized hypergeometric functions}\label{sec:hermite}
\begin{enumerate}
\item[(i)] When $m=1$ and $r=2$, we see that Theorem~\ref{thm2condEPPF} recovers
almost immediately the result of Pitman~\cite[Corollary
4.11, p. 93]{Pit06} or~\cite{Pit02} as follows. One has
that $\mathbb{G}^{(n,k)}_{1/2}(t)$ is expressible as
$$
\frac{G^{0,2}_{2,1} \left(4t
 \left|
\begin{matrix} -\frac{1+k}{2},-\frac{k}{2}
\\ \\-n \end{matrix} \right.
\right)}{G^{0,1}_{1,0} \left(4t
 \left|
\begin{matrix} -\frac{1}{2}
\\ \\\overline{\hspace*{0.2in}} \end{matrix}
\right. \right)}=
\frac{(4t)^{-\frac{1+k}{2}-1}e^{-\frac{1}{4t}}
U\left(-\frac{k}{2}+n, \frac{3}{2}, \frac{1}{4t}\right)}
{(4t)^{-\frac{3}{2}}e^{-\frac{1}{4t}}},
$$
where $U(a,b,c)$ is the confluent hypergeometric function
of the second kind~(see~\cite[p. 263]{Lebedev72}). The
above ratio reduces to
$$
2^{-k+1} t^{-\frac{k}{2}+\frac{1}{2}}
U\left(-\frac{k}{2}-\frac{1}{2}+n, \frac{1}{2},
\frac{1}{4t}\right)
$$
via an application of the recurrence relation~\cite[p.
505]{Slater65}
$$
U(a,b,z) = z^{1-b} U(1+a-b,2-b,z).
$$
A change of variable $t = \frac{1}{2} \lambda^{-2}$ yields
the expression $2^{n-k} \lambda^{k-1} h_{k+1-2n}(\lambda)$
inside equation~(110) in~\cite{Pit02}, where
$h_{\nu}(\lambda)$ is the Hermite function of index
$\nu$~\cite[section 10.2]{Lebedev72}, based on the
following relationship,
$$
h_{\nu}(\lambda) = 2^{\nu/2} U\left(-\frac{\nu}{2},
\frac{1}{2}, \frac{\lambda^2}{2}\right).
$$
\item[(ii)]
Now, when $m=1$ and $r=3$, $\mathbb{G}^{(n,k)}_{1/3}(t)$ is
expressible as
$$
\frac{G^{0,3}_{3,1} \left(27t
 \left|
\begin{matrix} -\frac{2+k}{3},-\frac{1+k}{3}, -\frac{k}{3}
\\ \\-n \end{matrix} \right.
\right)}{G^{0,2}_{2,0} \left(27t
 \left|
\begin{matrix} -\frac{1}{3}, -\frac{2}{3}
\\ \\\overline{\hspace*{0.2in}} \end{matrix}
\right. \right)},
$$
where the $G$-functions at the numerator and at the
denominator are respectively
\begin{eqnarray*}
&&\frac{4\pi^2}{3^{\frac{k}{3}+4}t^{\frac{k}{3}+1}}\left[\frac{
\,_1{F}_2(1-n+\frac{k}{3};\frac{1}{3},\frac{2}{3};\frac{1}{27t})}
{\Gamma(n-\frac{k}{3})\Gamma(\frac{1}{3})\Gamma(\frac{2}{3})}
-\frac{
\,_1{F}_2(1-n+\frac{k}{3}+\frac{1}{3};\frac{2}{3},\frac{4}{3};\frac{1}{27t})}
{3t^{\frac{1}{3}}\Gamma(n-\frac{k+1}{3})\Gamma(\frac{2}{3})\Gamma(\frac{4}{3})}\right.\\
&&\left. \hspace*{2in}+\frac{
\,_1{F}_2(1-n+\frac{k}{3}+\frac{2}{3};\frac{4}{3},\frac{5}{3};\frac{1}{27t})}
{(3t^{\frac{1}{3}})^2
\Gamma(n-\frac{k+2}{3})\Gamma(\frac{4}{3})\Gamma(\frac{5}{3})}\right]
\end{eqnarray*} and
$$
\frac{2\pi}{3^{\frac{9}{2}}t^{\frac{4}{3}}}\left[
\frac{\,_0{F}_1(;\frac{2}{3};\frac{1}{27t})}{\Gamma(\frac{2}{3})}-
\frac{\,_0{F}_1(;\frac{4}{3};\frac{1}{27t})}{3t^{\frac{1}{3}}\Gamma(\frac{4}{3})}\right],$$
wherein, for non-negative integers $p$ and $q$, $p\leq q$ or
$p=q+1,|z|\leq 1$, and $b_j\neq 0,-1,-2,\ldots j=1,\ldots,q$,
\begin{equation}\label{pFq}
_pF_q(a_1,\ldots,a_p;b_1,\ldots,b_q;z) = \sum_{i=0}^\infty
\frac{[a_1]_i[a_2]_i\cdots[a_p]_i}{[b_1]_i[b_2]_i\cdots[b_q]_i}
\frac{z^i}{i!},
\end{equation}
is called the generalized hypergeometric
function~(see~\cite[Chapter IV]{Erdelyi53a},~\cite[p.
437]{Prudnikov90}), which is available in
\textit{Mathematica} as\break
\verb"HypergeometricPFQ[{a1,...,ap},{b1,...,bq},z]".
\end{enumerate}

\begin{rem} In general, when $\alpha=m/r$ with $r > 2$, both the numerator
and the denominator of the ratios in Theorem~\ref{thm2condEPPF}
are expressible in terms of sums of generalized hypergeometric
functions of the form,
$$
_{2m
-1}{F}_{m+r-2}(a_1,\ldots,a_{2m-1};b_1,\ldots,b_{m+r-2};\cdot) $$
and
$$
_{m-1}{F}_{r-2}(a_1,\ldots,a_{m-1};b_1,\ldots,b_{r-2};\cdot)
$$
respectively.
\end{rem}

\section{Unconditional Gibbs}\label{sec:uncondEPPF}
Hereafter we assume that the mixing distribution can be
represented as the density
$$
\gamma_{\alpha}(t)=h(t)f_{\alpha}(t)
$$
where $h(t)$ is a non-negative function such that
$$
\int_{0}^{\infty}h(t)f_{\alpha}(t)=\mathbb{E}[h(S_{\alpha})]=1.
$$
Note that mixing the density $\gamma_{\alpha}$ over
$\mathbb{G}^{(n,k)}_{\alpha}(t)$ leads to the following
class of operators
$$
\mathcal{I}^{(n,k)}_{\alpha}(h)=\int_{0}^{\infty}\mathbb{G}^{(n,k)}_{\alpha}(t)\gamma_{\alpha}(t)dt=\alpha^{k-1}\int_{0}^{\infty}h(t)
{H_{1,1}^{0,1} \left[ \; t \left|
\begin{matrix} (1-\frac{1}{\alpha}-k,\frac{1}{\alpha})
\\ \\ (-n,1)  \end{matrix}\; \right.\right]}dt
$$
which is a particular type of Fox $H$-integral transforms,
see, for instance,~\cite{Kirbook} and~\cite{Kilbas04}.

\begin{thm}Suppose that $h(t)$ is a non-negative integrable
function with respect to $f_{\alpha}.$ Then, without loss
of generality, set $\int_{0}^{\infty}h(t)f_{\alpha}(t)=1,$
and form the density
$\gamma_{\alpha}(t)=h(t)f_{\alpha}(t).$ Then the EPPF of
\PK$(\rho_{\alpha};\gamma_{\alpha})$ random partition has
Gibbs form
$$
p_{\alpha,\gamma_{\alpha}}(n_{1},\ldots,n_{k})=V_{n,k}\prod_{j=1}^{k}[1-\alpha]_{n_{j}-1}
$$
where, for $k=1,\ldots, n$,
\begin{eqnarray}\nonumber
V_{n,k}&=&\frac{\alpha^{k-1}\Gamma(k)}{\Gamma(n)}
\mathbb{E}\left[h\left(\frac{S_{\alpha,k\alpha}}{\B_{k\alpha,n-k\alpha}}\right)\right]\\\nonumber
&=&\frac{\alpha^{k-1}\Gamma(k)}{\Gamma(n)}
\mathbb{E}\left[h\left(\frac{S_{\alpha,(k-1)\alpha}}{\B_{(k-1)\alpha+1,n-1-(k-1)\alpha}}\right)\right]\\\nonumber
&=&\mathcal{I}^{(n,k)}_{\alpha}(h)
\end{eqnarray}
with $V_{1,1}=1.$
\end{thm}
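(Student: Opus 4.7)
The plan is to deduce this directly from Theorem~\ref{thm1condEPPF} together with the definition $V_{n,k}=\int_{0}^{\infty}\mathbb{G}_{\alpha}^{(n,k)}(t)\gamma(dt)$ given before equation~(\ref{EPPFu}). Substituting $\gamma(dt)=h(t)f_{\alpha}(t)\,dt$ and invoking statement~(i) of Theorem~\ref{thm1condEPPF} produces
$$
V_{n,k}=\int_{0}^{\infty}h(t)f_{\alpha}(t)\,\frac{\alpha^{k-1}\Gamma(k)}{\Gamma(n)}\frac{\tilde{f}_{\alpha,(n,k)}(t)}{f_{\alpha}(t)}\,dt,
$$
and the $f_{\alpha}(t)$ factors cancel, leaving
$$
V_{n,k}=\frac{\alpha^{k-1}\Gamma(k)}{\Gamma(n)}\int_{0}^{\infty}h(t)\tilde{f}_{\alpha,(n,k)}(t)\,dt.
$$
Since $\tilde{f}_{\alpha,(n,k)}$ is by definition the density of $S_{\alpha,k\alpha}/\B_{k\alpha,n-k\alpha}$, this integral is exactly $\mathbb{E}[h(S_{\alpha,k\alpha}/\B_{k\alpha,n-k\alpha})]$, giving the first displayed expression.

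The second expression follows immediately from the distributional equality~(\ref{equaldist}) in Theorem~\ref{thm1condEPPF}(i), which allows one to replace $S_{\alpha,k\alpha}/\B_{k\alpha,n-k\alpha}$ by $S_{\alpha,(k-1)\alpha}/\B_{(k-1)\alpha+1,n-1-(k-1)\alpha}$ inside the expectation. For the third expression $V_{n,k}=\mathcal{I}^{(n,k)}_{\alpha}(h)$, one simply uses statement~(ii) of Theorem~\ref{thm1condEPPF} in place of statement~(i) before carrying out the same cancellation of $f_{\alpha}(t)$; this recovers the Fox $H$-integral transform defining $\mathcal{I}^{(n,k)}_{\alpha}(h)$ at the start of the section.

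Finally, the boundary value $V_{1,1}=1$ follows from the normalization $\mathbb{E}[h(S_{\alpha})]=1$: specializing $n=k=1$ gives the prefactor $\alpha^{0}\Gamma(1)/\Gamma(1)=1$ multiplying $\mathbb{E}[h(S_{\alpha,\alpha}/\B_{\alpha,1-\alpha})]$, and from~(\ref{key2}) with $\theta=0$ one has $S_{\alpha,\alpha}/\B_{\alpha,1-\alpha}\overset{d}{=}S_{\alpha}$, so this expectation is $\int_{0}^{\infty}h(t)f_{\alpha}(t)\,dt=1$. The Gibbs form of the EPPF is then inherited verbatim from~(\ref{EPPFu}). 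There is no real obstacle here; the whole argument is a direct application of Theorem~\ref{thm1condEPPF}(i)--(ii) combined with Fubini (which is justified since $h\ge 0$). The only point to be a little careful about is keeping track of the cancellation of $f_{\alpha}(t)$ and the identification of the integral against $\tilde{f}_{\alpha,(n,k)}$ as an honest expectation, both of which are routine.
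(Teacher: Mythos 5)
Your argument is correct and is exactly the route the paper intends: the paper simply states ``This result follows directly from Theorem~\ref{thm1condEPPF}'' without writing out the details, and your proposal supplies precisely those details (cancellation of $f_{\alpha}$ against the density ratio in Theorem~\ref{thm1condEPPF}(i), the distributional identity~(\ref{equaldist}) for the second form, Theorem~\ref{thm1condEPPF}(ii) for the $H$-transform form, and the normalization $\mathbb{E}[h(S_{\alpha})]=1$ for $V_{1,1}=1$). No discrepancies.
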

This result follows directly from
Theorem~\ref{thm1condEPPF}.

\subsection{$\mathbf{G}$-transform for rational values}\label{sec:Grational} When $h(t)$ is set
to be a $G$-function, one can use, for instance,
Theorem~\ref{AppG2Gthm} in the appendix, to calculate many EPPF's as
follows.

\begin{prop}\label{thm5EPPFMN}
When $\alpha=m/r$ and $h(t)$ is expressible as
$$
C \times G^{u,v}_{w,x}\left(\sigma t
 \left|
\begin{matrix} (c_i)_{1}^{w}\\ \\(d_j)_{1}^{x} \end{matrix} \right. \right),
$$
where $C$ is a constant and notation in~(\ref{Gnotation2})
follows, then $V_{n,k}$ is
\begin{eqnarray*}
&&\frac{C K_{m/r} r^{r/m} m^{\rho+(x-w)-1+k-n}}{\sigma(2\pi)^{b^{\ast}(m-1)}} \\
&&\qquad \times G^{vm,(u+1)m+r-1}_{(x+1)m+r-1,(w+2)m-1} \left(
\frac{r^{r}}{(\sigma m)^{m}}
 \left|
\begin{matrix} \Pup,\Delta(m,-d_1),\ldots,
\Delta(m,-d_x)\\ \\\Pdown, \Delta(m,-c_1),\ldots,
\Delta(m,-c_w)\end{matrix} \right. \right),
\end{eqnarray*}
where $K_{m/r}$ is defined in~(\ref{K}), $\Delta(\ell,a)$, for any
integer $\ell$, is defined in~(\ref{Delta2}), and $\rho$ and
$b^{\ast}$ are defined for $G^{u,v}_{w,x}(\sigma t)$ according
to~(\ref{Gnotation2}).
\end{prop}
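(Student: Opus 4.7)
The plan is to reduce $V_{n,k}$ to an integral of a product of two Meijer $G$-functions and then apply the Mellin-convolution identity for such products. Starting from the expectation form of $V_{n,k}$ in the preceding theorem combined with the density identity $\tilde f_{\alpha,(n,k)}(t)=\mathbb{G}^{(n,k)}_{\alpha}(t)\,f_{\alpha}(t)\cdot\Gamma(n)/(\alpha^{k-1}\Gamma(k))$ from Theorem~\ref{thm1condEPPF}(i), one can write
\[
V_{n,k}\;=\;\int_{0}^{\infty}h(t)\,\mathbb{G}^{(n,k)}_{m/r}(t)\,f_{m/r}(t)\,dt.
\]
Using~(\ref{Smr}) together with the numerator $G$-function identified in the proof of Theorem~\ref{thm2condEPPF}, the integrand factor $\mathbb{G}^{(n,k)}_{m/r}(t) f_{m/r}(t)$ equals $m^{k-n}K_{m/r} r^{r/m}$ times
\[
G^{0,m+r-1}_{m+r-1,\,2m-1}\!\left(\tfrac{r^{r}}{m^{m}}t^{m}\,\bigg|\,\begin{matrix}\Pup\\ \Pdown\end{matrix}\right).
\]
Substituting the hypothesised form $h(t)=C\,G^{u,v}_{w,x}(\sigma t\mid\cdot)$ thus reduces the task to the evaluation of $\int_{0}^{\infty}G^{u,v}_{w,x}(\sigma t\mid\cdot)\,G^{0,m+r-1}_{m+r-1,2m-1}\!\bigl((r^{r}/m^{m})t^{m}\mid\cdot\bigr)\,dt$.

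The second step is to invoke the Meijer $G$ times Meijer $G$ integral formula stated as Theorem~\ref{AppG2Gthm} in the appendix. The essential subtlety is that one $G$-function has argument $t^{m}$ rather than $t$, which forces the Gauss--Legendre multiplication formula
\[
\Gamma(mz)=(2\pi)^{(1-m)/2}\,m^{mz-1/2}\prod_{i=0}^{m-1}\Gamma\!\left(z+\tfrac{i}{m}\right)
\]
to be applied to each gamma factor of its Mellin-Barnes integrand. This is precisely what produces the $\Delta(m,-c_{i})$ and $\Delta(m,-d_{j})$ blocks in the output parameter vectors, while the $\Pup$ and $\Pdown$ blocks inherited from $\mathbb{G}^{(n,k)}_{m/r}(t)f_{m/r}(t)$ pass through unchanged. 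The accumulated $(2\pi)^{(1-m)/2}$ constants produce the denominator factor $(2\pi)^{b^{\ast}(m-1)}$ (with $b^{\ast}$ counting the gamma factors to which the multiplication rule is applied), and the analogous $m^{mz-1/2}$ terms combine, together with the $\sigma^{-1}$ from the change of variable $s=\sigma t$, into the advertised prefactor $m^{\rho+(x-w)-1+k-n}/\sigma$; the scaling of the outer $G$-function accounts for the argument $r^{r}/(\sigma m)^{m}$.

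The main obstacle is purely one of bookkeeping: matching the output order indices $\bigl(vm,(u+1)m+r-1;(x+1)m+r-1,(w+2)m-1\bigr)$, correctly using the shift rule~(\ref{AppHshift}) to absorb stray powers of $t$ before invoking Theorem~\ref{AppG2Gthm}, and verifying that the hypotheses on $\rho$ and $b^{\ast}$ guarantee absolute convergence of the resulting Mellin contour integral. No new probabilistic input is required; once the density identity of Theorem~\ref{thm1condEPPF}(i) has delivered the $G$-function integrand, the proof is a careful but mechanical application of the $G$-to-$G$ integral formula.
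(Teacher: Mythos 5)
Your proposal is correct and takes essentially the same approach as the paper, whose entire proof is the one-line remark that the result is a specialization of Theorem~\ref{AppG2Gthm}: you make that specialization explicit by recognizing $\mathbb{G}^{(n,k)}_{m/r}(t)f_{m/r}(t)$ as $m^{k-n}K_{m/r}r^{r/m}$ times the single $G$-function with parameter vectors $\Pup,\Pdown$ (via Theorem~\ref{thm2condEPPF} and~(\ref{Smr})) and then integrating the product of the two $G$-functions. Your aside on the Gauss--Legendre multiplication formula accurately describes the mechanism inside the quoted integral formula that produces the $\Delta(m,\cdot)$ blocks and the $(2\pi)^{b^{\ast}(m-1)}$ factor, but it is internal to Theorem~\ref{AppG2Gthm} rather than an extra step you need to carry out.
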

The result is just a specialization of
Theorem~\ref{AppG2Gthm} given in the appendix. We now
address two specific new examples.

\subsubsection{Example: Modified Bessel
functions}\label{sec:ModBessel}
\begin{prop} Suppose we take $\gamma_{1/2}(t)
\propto K_{\eta}(\sqrt{t}) f_{1/2}(t)$, or in other words, $h(t) =
K_{\eta}(\sqrt{t}) / \int K_{\eta}(\sqrt{t}) f_{1/2}(t) dt$, where
$K_{\eta}(\cdot)$ is the modified Bessel function of the third
kind or Macdonald function~\cite[Sec. 7.2.1]{Erdelyi53b}. Then,
$$
V_{n,k} = \frac{G^{0,4}_{4,1} \left(16
 \left|
\begin{matrix} -\frac{1+k}{2},-\frac{k}{2}, \frac{\eta}{2},
-\frac{\eta}{2}
\\\\ -n\end{matrix} \right.
\right)}{G_{3,0}^{0,3} \left(16
 \left|
\begin{matrix}-\frac{1}{2}, \frac{\eta}{2}, -\frac{\eta}{2}
\\ \\\overline{\hspace*{0.5in}}\end{matrix} \right.
\right)}.
$$
\end{prop}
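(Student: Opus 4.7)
The strategy is to recognize $h(t) = K_\eta(\sqrt{t})/N$ (where $N = \mathbb{E}[K_\eta(\sqrt{S_{1/2}})]$ is the normalization) as a Meijer $G$-function and then apply Proposition \ref{thm5EPPFMN} with $\alpha = m/r = 1/2$. A separate $G$-convolution computes $N$, and the constants from both computations should cancel to leave the stated ratio.

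First I would invoke the standard integral representation of the Macdonald function in Meijer $G$ form,
$$
K_\eta(\sqrt{t}) \;=\; \frac{1}{2}\, G^{2,0}_{0,2}\!\left(\frac{t}{4}\,\bigg|\, \begin{matrix} - \\ \eta/2,\, -\eta/2 \end{matrix}\right),
$$
(see, e.g., Prudnikov). This fixes, in the notation of Proposition \ref{thm5EPPFMN}, the parameters $u=2$, $v=0$, $w=0$, $x=2$, $\sigma = 1/4$, $C = 1/(2N)$, and $(d_j)_1^2 = (\eta/2, -\eta/2)$ with an empty $(c_i)$ list.

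Next I would apply Proposition \ref{thm5EPPFMN} with $m=1$, $r=2$. Since $m=1$, the operator $\Delta(1,a)$ is the identity, and the vectors specialize to $\Pup = (-(1+k)/2,\,-k/2)$ and $\Pdown = (-n)$. The argument becomes $r^r/(\sigma m)^m = 4/(1/4) = 16$, and the $G$-function orders evaluate to $vm = 0$, $(u+1)m + r - 1 = 4$, $(x+1)m + r - 1 = 4$, $(w+2)m - 1 = 1$, producing exactly $G^{0,4}_{4,1}(16 \mid -\tfrac{1+k}{2},-\tfrac{k}{2},-\eta/2,\eta/2 \mid -n)$ (the ordering of the $\pm \eta/2$ entries is immaterial). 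Tracking prefactors, $K_{1/2} = 1/\sqrt{\pi}$, $r^{r/m} = 4$, $m^{\,\cdots} = 1$, and $(2\pi)^{b^*(m-1)} = 1$, so the multiplicative constant coming out of Proposition \ref{thm5EPPFMN} is $(1/(2N))(1/\sqrt{\pi})(4)/(1/4) = 8/(N\sqrt{\pi})$.

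It then remains to compute $N$ as a $G$-function. Using formula (\ref{Smr}) with $m=1, r=2$, one has $f_{1/2}(t) = (4/\sqrt{\pi})\, G^{0,1}_{1,0}(4t \mid -1/2 \mid \cdot)$, so
$$
N \;=\; \int_0^\infty K_\eta(\sqrt{t})\, f_{1/2}(t)\,dt \;=\; \frac{2}{\sqrt{\pi}} \int_0^\infty G^{2,0}_{0,2}\!\left(\frac{t}{4}\right) G^{0,1}_{1,0}(4t)\,dt.
$$
This is a standard $G$-to-$G$ Mellin convolution (a specialization of the same kind of formula, Corollary \ref{AppGcor1}, used in the proof of Theorem \ref{thm2condEPPF}); since the product of arguments is $(1/4)\cdot 4 = 1$ and we rescale, the result collapses to a scalar multiple of $G^{0,3}_{3,0}(16 \mid -1/2, \eta/2, -\eta/2 \mid \cdot)$. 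Matching the scalar against the $8/(N\sqrt{\pi})$ computed above, the $N$, the $\sqrt{\pi}$, and the powers of $2$ must all cancel, leaving precisely the ratio claimed. The main obstacle (and only real work) is therefore the bookkeeping of these prefactors and the verification that the $G$-to-$G$ convolution for $N$ produces $G^{0,3}_{3,0}$ with the argument $16$ rather than $1/16$ (handled, if needed, by the inversion identity $G^{m,n}_{p,q}(1/z) = G^{n,m}_{q,p}(z)$ together with the shift (\ref{AppHshift})).
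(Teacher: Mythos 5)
Your proposal follows essentially the same route as the paper's own proof: write $K_\eta(\sqrt{t})$ as $\tfrac12 G^{2,0}_{0,2}(t/4)$, feed it into Proposition~\ref{thm5EPPFMN} with $m=1$, $r=2$ to get the $G^{0,4}_{4,1}(16\mid\cdot)$ numerator, and evaluate the normalizing constant by the $G$-to-$G$ convolution of Theorem~\ref{AppG2Gthm} to get the $G^{0,3}_{3,0}(16\mid\cdot)$ denominator, with the prefactors cancelling. Your parameter bookkeeping ($u=2$, $v=0$, $w=0$, $x=2$, $\sigma=1/4$, constant $8/(N\sqrt{\pi})$) is correct, so the argument is sound and matches the paper.
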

\begin{proof} The result follows from Proposition~\ref{thm5EPPFMN}
by substituting $m=1$ and $r=1$ and recognizing
$$
h(t) = C \times K_{\eta}(\sqrt{t}) = C \times G^{2,0}_{0,2}
\left(\frac{t}{4}
 \left|
\begin{matrix} \overline{\hspace*{0.5in}}
\\\\ -\frac{\eta}{2}, \frac{\eta}{2}\end{matrix} \right.
\right)
$$
by~(\ref{AppBessel}), where
\begin{eqnarray*}
C^{-1} &=& \int K_{\eta}(\sqrt{t}) f_{1/2}(t) dt \\
&=& \frac{4}{\sqrt{\pi}} \int_0^\infty G^{2,0}_{0,2}
\left(\frac{t}{4}
 \left|
\begin{matrix} \overline{\hspace*{0.5in}}
\\ \\-\frac{\eta}{2}, \frac{\eta}{2}\end{matrix} \right.
\right) G_{1,0}^{0,1} \left(4t
 \left|
\begin{matrix} -\frac{1}{2}
\\ \\ \overline{\hspace*{0.5in}} \end{matrix} \right.
\right)dt\\
&=& \frac{16}{\sqrt{\pi}}G_{3,0}^{0,3} \left(16
 \left|
\begin{matrix}-\frac{1}{2}, \frac{\eta}{2}, -\frac{\eta}{2}
\\ \\\overline{\hspace*{0.5in}}\end{matrix} \right.
\right)
\end{eqnarray*}
due to Theorem~\ref{AppG2Gthm}.
\end{proof}

\subsubsection{Generalized Hypergeometric Functions as EPPF}\label{sec:genhyper} An
EPPF in terms of one generalized hypergeometric function defined
in~(\ref{pFq})~(see~\cite[Chapter IV]{Erdelyi53a}) results when
$V_{n,k} = \int_0^\infty \mathbb{G}^{(n,k)}_{\alpha}(t) \gamma(t)
dt$ is representable as some constant multiplies either
\begin{equation}\label{hyper1}
G^{1,p}_{p,q+1}(-z),\quad \mbox{for }p\leq q.
\end{equation}
or
\begin{equation}\label{hyper2}
G^{1,q+1}_{q+1,q+1}(-z),\quad \mbox{for } |z|\leq 1,
\end{equation}
due to~(\ref{AppGhyper}).

\begin{prop}
Suppose that $\alpha=1/r$ and $h(t) = C \times G^{x,1}_{w,x}
\left(\left(r^r\right)t
 \left|
\begin{matrix} (c_i)_{1}^{w}
\\ \\(d_j)_{1}^{x} \end{matrix} \right.
\right)$, where $C$ is a constant, notation in~(\ref{Gnotation2})
follows, and none of $(c_i)_{1}^{w}$ is identical to any of
$(d_j)_{1}^{x}$. When
\begin{itemize}
\item[(i)] $w \geq x+r$, or
\item[(ii)] $x=w-r+1$ and $w \geq r-1$,
\end{itemize}
$V_{n,k}$ is given by
$$
C^{\ast}_{w,x} \times
\,_{x+r}F_w\left(\left(\frac{i+k}{r}+\nu\right)_{1}^{r-1},\frac{k}{r}+\nu,
\left(d_i+\nu\right)_{1}^{x};(c_j+\nu)_{1}^{w};-1\right),
$$
where $\nu = 1-n$ and
$$
C^{\ast}_{w,x} = \frac{CK_{1/r}
r^r\prod_{i=1}^{r-1}\Gamma\left(\frac{i+k}{r }+\nu\right)
\Gamma\left(\frac{k}{r}+\nu\right) \prod_{i=1}^{x}\Gamma(d_i+\nu)}
{\prod_{j=1}^w\Gamma(c_j+\nu)},
$$
with $K_{1/r}$ defined in~(\ref{K}) with $m=1$, provided that $n
\neq d_j \neq \frac{k+\ell-1}{r}$ and $c_i \neq
\frac{k+\ell-1}{r}$ for $i=1,\ldots,w, j=1,\ldots,x,
\ell=1,\ldots,r$.
\end{prop}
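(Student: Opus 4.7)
The plan is to specialize Proposition \ref{thm5EPPFMN} to the case $m=1$ and then convert the resulting Meijer $G$-function into a generalized hypergeometric series via the identity (\ref{AppGhyper}). When $m=1$, the auxiliary parameter vectors collapse so that $\Pup = (-\tfrac{1+k}{r},\ldots,-\tfrac{r-1+k}{r},-\tfrac{k}{r})$, $\Pdown=(-n)$, and $\Delta(1,a)=a$. Substituting these along with $u=x$, $v=1$ and $\sigma=r^r$ into Proposition \ref{thm5EPPFMN}, the factor $(2\pi)^{b^{\ast}(m-1)}$ becomes $1$ and the argument of the output $G$-function equals $r^r/(\sigma m)^m = 1$. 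This yields a representation of $V_{n,k}$ as an explicit constant times
$$
G^{1,\,x+r}_{x+r,\,w+1}\!\left(1 \,\Big|\, \begin{matrix} -\tfrac{1+k}{r},\,\ldots,\,-\tfrac{r-1+k}{r},\,-\tfrac{k}{r},\,-d_1,\,\ldots,\,-d_x \\ -n,\,-c_1,\,\ldots,\,-c_w\end{matrix}\right).
$$

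I would then verify that this $G$-function matches one of the canonical forms (\ref{hyper1}) or (\ref{hyper2}) to which (\ref{AppGhyper}) applies. Condition (i), $w\geq x+r$, is exactly the inequality $p\leq q$ in (\ref{hyper1}) with $p=x+r$ and $q=w$; condition (ii), $x=w-r+1$, forces $p=q+1=x+r$ matching (\ref{hyper2}), while the extra requirement $w\geq r-1$ (equivalently $x\geq 0$) secures convergence at $|z|=1$. To invoke (\ref{AppGhyper}) I would first apply the shift identity $z^{\sigma} G(z|a_i;b_j) = G(z|a_i+\sigma;b_j+\sigma)$ with $\sigma=n$ so that the first lower parameter becomes $0$; because the argument equals $1$, this translation introduces no additional multiplicative factor. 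The identity (\ref{AppGhyper}) then converts the $G$-function to a ratio of Gamma products times a $_{x+r}F_w$ whose parameters, after writing $\nu=1-n$, are precisely $(\tfrac{i+k}{r}+\nu)_{1}^{r-1}$, $\tfrac{k}{r}+\nu$, $(d_i+\nu)_{1}^{x}$ upstairs and $(c_j+\nu)_{1}^{w}$ downstairs, with argument $-1$.

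The final step is collecting the multiplicative constants. The factor $C K_{1/r}$ arises directly from Proposition \ref{thm5EPPFMN}, and the Gamma ratio produced by (\ref{AppGhyper}) coincides with the ratio of products appearing in $C^{\ast}_{w,x}$. The hypotheses $n \neq d_j \neq (k+\ell-1)/r$ and $c_i \neq (k+\ell-1)/r$ serve two roles: they prevent cancellation of numerator and denominator entries inside the $G$-function (which would otherwise change its type $(u,v,w,x)$ and invalidate (\ref{AppGhyper})), and they keep all Gamma factors in $C^{\ast}_{w,x}$ away from their poles. The main obstacle I anticipate is precisely this bookkeeping: verifying that the sign on the argument comes out to $-1$ in both regimes (i) and (ii), and pinning down how the leftover powers of $r$ from $r^{r/m}/\sigma$ in Proposition \ref{thm5EPPFMN} combine with the prefactor coming from (\ref{AppGhyper}) to reproduce the stated $C^{\ast}_{w,x}$ exactly.
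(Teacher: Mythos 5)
Your proposal follows exactly the paper's own route: specialize Proposition~\ref{thm5EPPFMN} to $m=1$ (where the output $G$-function has argument $1$ and parameters $\Pup=((-\frac{i+k}{r})_1^{r-1},-\frac{k}{r})$, $\Pdown=(-n)$), shift the parameters via~(\ref{AppHshift}) so the lower entry $-n$ becomes $0$, and then invoke~(\ref{AppGhyper}) under the two regimes matching~(\ref{hyper1}) and~(\ref{hyper2}). The approach and all key steps coincide with the paper's proof, so no further comparison is needed.
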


\begin{proof} It follows from Proposition~\ref{thm5EPPFMN} with $m=1$ that $V_{n,k}$
is equal to $CK_{1/r}r^r$ multiplies
\begin{eqnarray*}
&&G_{x+r,w+1}^{1,x+r} \left(1
 \left|
\begin{matrix} (-\frac{i+k}{r})_{1}^{r-1},
-\frac{k}{r},(-d_i)_{1}^{x}
\\ \\-n, (-c_j)_{1}^{w}\end{matrix} \right. \right)\\
&&\quad=G_{x+r,w+1}^{1,x+r} \left(1
 \left|
\begin{matrix} \left(1-\left(\frac{i+k}{r}+\nu\right)\right)_{1}^{r-1},
1-\left(\frac{k}{r}+\nu\right),\left(1-\left(d_i+\nu\right)\right)_{1}^{x}
\\ \\0, (1-(c_j+\nu))_{1}^{w}\end{matrix} \right. \right),
\end{eqnarray*}
followed from~(\ref{AppHshift}). The last $G$-function takes
either the form of~(\ref{hyper1}) when $w \geq x+r$, or the form
of~(\ref{hyper2}) when $x=w-r+1$ and $w\geq r-1$, and, hence, the
result follows from~(\ref{AppGhyper}).
\end{proof}

\section{First distribution theory example: $S_{\alpha,\theta}$ given
$X_{\alpha,\theta}$}\label{sec:distSX} We now come to our first
result which does not rely on special properties of $G$- or
$H$-functions and importantly applies to all values of $\alpha.$ Let
$S_{\alpha}$ and $S_{\alpha,\theta}$ denote independent random
variables having laws described previously. Then, define the random
variables
\begin{equation}
X_{\alpha,\theta}=\frac{S_{\alpha}}{S_{\alpha,\theta}}
\label{ratio}
\end{equation}
whose laws have been recently studied in
James~\cite{JamesLamp}. They represent a natural
generalization of the random variable
$$
X_{\alpha}=\frac{S_{\alpha}}{S'_{\alpha}}
$$
where $X_{\alpha}\overset{d}=X_{\alpha,0}$ and
$S'_{\alpha}$ is independent of $S_{\alpha}$ and has the
same distribution. Remarkably although $S_{\alpha}$ does
not have a simple density, except for $\alpha=1/2$,
Lamperti~\cite{Lamperti}~(see also Chaumont and
Yor~\cite[exercise 4.2.1]{Chaumont}) shows that the density
of $X_{\alpha}$ is
\begin{equation}
f_{X_{\alpha}}(y)=\frac{\sin(\pi
\alpha)}{\pi}\frac{y^{\alpha-1}}{y^{2\alpha}+2y^{\alpha}\cos(\pi
\alpha)+1},\qquad\mbox{ for }y>0
\label{denX},\end{equation} with cdf
$$
F_{X_{\alpha}}(y)=1-\frac{1}{\pi\alpha}\cot^{-1}\left(\cot(\pi
\alpha)+\frac{y^{\alpha}}{\sin(\pi \alpha)}\right).$$ Note
furthermore, when $\alpha=1/2,$
\begin{equation} X_{1/2}\overset{d}=\frac{G'_{1/2}}{G_{1/2}}{\mbox
{ and
}}X_{1/2,\theta}\overset{d}=\frac{G_{\theta+1/2}}{G_{1/2}},
\label{Brownian}
\end{equation}
where $G'_{1/2}\overset{d}=G_{1/2}$ and all the gamma random
variables are independent. See~\cite[section 4.2]{JamesMean} for
more on the variables~(\ref{Brownian}).

Here we investigate the mixing distribution $\gamma$
corresponding to the law of $S_{\alpha,\theta}$ given
$X_{\alpha,\theta}=1.$ That is, the random variable with
density
\begin{equation}
\gamma_{\alpha}(t)=C_{\alpha,\theta}t^{1-\theta}f_{\alpha}(t)f_{\alpha}(t)
\label{Lam}
\end{equation}
where
$$
C_{\alpha,\theta}=1/f_{X_{\alpha,\theta}}(1).
$$
So from~(\ref{denX}),
$$
C_{\alpha,0}=\frac{2\pi(1+\cos(\pi\alpha))}{\sin(\pi\alpha)}.
$$
Now define, for $\theta>0,$
$$
\Delta_{\theta}(x|F_{X_{\alpha}})=\frac{1}{\pi}\frac{\sin(\pi \theta
F_{X_{\alpha}}(x))}{{[x^{2\alpha}+2x^{\alpha}\cos(\alpha
\pi)+1]}^{\frac{\theta}{2\alpha}}}.
$$
We now give a brief description of the density of
$X_{\alpha,\theta}$ for $\theta>0$ and general $\alpha$
which is due to James~\cite[Theorem 3.1]{JamesLamp}. When
$\theta=1,$
$$
f_{X_{\alpha,1}}(y)=\Delta_{1}(y|F_{X_{\alpha}})=\frac{1}{\pi}\frac{\sin(\pi
F_{X_{\alpha}}(y))}{{[y^{2\alpha}+2y^{\alpha}\cos(\alpha
\pi)+1]}^{\frac{1}{2\alpha}}}.
$$
Hence, the normalizing constant in this case satisfies
$$
1/C_{\alpha,1}=f_{X_{\alpha,1}}(1)=\frac{1}{\pi}\frac{\sin(\pi
F_{X_{\alpha}}(1))}{{[2(1+\cos(\alpha \pi))]}^{\frac{1}{2\alpha}}}
$$

In general, for $\theta>0,$
\begin{equation}
f_{X_{\alpha,\theta}}(y)=\int_{0}^{y}{(y-x)}^{\theta-1}\Delta'_{\theta}(x)dx
\label{Jamesden}
\end{equation} where, suppressing dependence on $F_{X_{\alpha}},$ $\Delta'$  denotes the derivative of $\Delta.$
Furthermore, define
\begin{equation}
\label{sumden} \vartheta^{(n,k)}_{j}(x)=\frac{{[\sin(\pi \alpha
)]}^{k-j}}{\pi}\frac{( x^{\alpha }\cos ( \alpha \pi ) +1)
^{j}x^{\alpha ( k-j) }}{\left[ x^{2\alpha }+2x^{\alpha }\cos (
\alpha \pi ) +1\right] ^{k}}.
\end{equation}
Before we proceed with the description of the EPPF, we provide a
representation of $\Delta_{k\alpha}$ connected with the random
variable $X_{\alpha,k\alpha}.$

\begin{lem}\label{lemDelta}For $0<\alpha<1,$ and $k=1,2,\ldots,$
$$
\Delta_{k\alpha}(x|F_{X_{\alpha}})=\sum_{j=0}^{k}\binom{k}{j}\sin\left(
\frac{\pi}{2}( k-j)\right) \vartheta^{(n,k)}_{j}(x)
$$
\end{lem}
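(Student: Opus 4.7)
The plan is to recognize both sides of the asserted identity as imaginary parts of a single complex $k$-th power. I would introduce the complex number
$$
z \,:=\, 1 + x^{\alpha}e^{i\pi\alpha} \,=\, (1+x^{\alpha}\cos(\pi\alpha)) + i\,x^{\alpha}\sin(\pi\alpha),
$$
so that $|z|^{2} = x^{2\alpha}+2x^{\alpha}\cos(\pi\alpha)+1$, which is exactly the quantity appearing (to the powers $k/2$ and $k$ respectively) in the denominators of $\Delta_{k\alpha}(x|F_{X_\alpha})$ and $\vartheta^{(n,k)}_{j}(x)$. Writing $R:=|z|$ and $\phi:=\arg z$, De Moivre's formula gives $z^{k} = R^{k}e^{ik\phi}$.

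The crux of the argument is to identify $\phi$ with $\pi\alpha F_{X_\alpha}(x)$. From the explicit cdf stated above,
$$
\pi\alpha F_{X_\alpha}(x) \,=\, \pi\alpha \,-\, \cot^{-1}\!\Big(\cot(\pi\alpha)+\tfrac{x^{\alpha}}{\sin(\pi\alpha)}\Big);
$$
applying the tangent-of-difference formula and simplifying (using $\sin^{2}(\pi\alpha)+\cos^{2}(\pi\alpha)=1$) yields $\tan(\pi\alpha F_{X_\alpha}(x)) = x^{\alpha}\sin(\pi\alpha)/(1+x^{\alpha}\cos(\pi\alpha))$, which coincides with $\tan\phi$ as read off the real and imaginary parts of $z$. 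Since $z$ lies on the open ray from $1$ to $\infty$ in direction $e^{i\pi\alpha}$, its argument satisfies $\phi\in(0,\pi\alpha)$, and $\pi\alpha F_{X_\alpha}(x)\in(0,\pi\alpha)$ as well for $x>0$, so the two angles must agree.

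With this identification in hand, $\operatorname{Im}(z^{k}) = R^{k}\sin(k\pi\alpha F_{X_\alpha}(x))$. On the other hand, expanding $z^{k}$ by the binomial theorem directly in its rectangular form and using $\operatorname{Im}(i^{k-j}) = \sin(\tfrac{\pi}{2}(k-j))$ gives
$$
\operatorname{Im}(z^{k}) \,=\, \sum_{j=0}^{k}\binom{k}{j}\sin\!\big(\tfrac{\pi}{2}(k-j)\big)\,[\sin(\pi\alpha)]^{k-j}\,(1+x^{\alpha}\cos(\pi\alpha))^{j}\,x^{\alpha(k-j)}.
$$
Equating the two expressions for $\operatorname{Im}(z^{k})$ and dividing through by $\pi R^{2k}$ produces exactly the claimed identity: the left side becomes $\Delta_{k\alpha}(x|F_{X_\alpha})$ (since $R^{-k}=R^{k}/R^{2k}$), while the right side becomes $\sum_{j=0}^{k}\binom{k}{j}\sin(\tfrac{\pi}{2}(k-j))\vartheta^{(n,k)}_{j}(x)$. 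The only delicate point I expect is the branch bookkeeping in identifying $\phi$ with $\pi\alpha F_{X_\alpha}(x)$; beyond that, the proof reduces to De Moivre and the binomial theorem.
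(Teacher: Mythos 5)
Your proof is correct and takes essentially the same route as the paper: expanding $(1+x^{\alpha}e^{i\pi\alpha})^{k}$ by the binomial theorem and taking imaginary parts is exactly the multiple-angle formula for $\sin(k\theta)$ with $\theta=\pi\alpha F_{X_{\alpha}}(x)$, and your identification of $\arg(1+x^{\alpha}e^{i\pi\alpha})$ with $\pi\alpha F_{X_{\alpha}}(x)$ is equivalent to the two sine/cosine identities the paper imports from James's Proposition~2.1. The only difference is that you re-derive those identities from the explicit cdf (with correct branch bookkeeping, since both angles lie in $(0,\pi\alpha)\subset(0,\pi)$) rather than citing them, which makes the argument more self-contained but not a different proof.
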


\begin{proof}Since $k=1,2,\ldots,$ we first apply the multiple angle formula to
$$
\sin(\pi k\alpha F_{X_{\alpha}}(x)).
$$
The result is concluded by noting the following identities
which are given in James~\cite[Proposition 2.1]{JamesLamp},
\begin{eqnarray*}
\sin ( \pi \alpha F_{X_{\alpha }}( x) )  =\frac{%
x^{\alpha }\sin ( \alpha \pi ) }{\left[ x^{2\alpha
}+2x^{\alpha }\cos ( \alpha \pi ) +1\right] ^{1/2}}
\end{eqnarray*} and
\begin{eqnarray*}
\cos ( \pi \alpha F_{X_{\alpha }}( x) )  =\frac{ x^{\alpha }\cos (
\alpha \pi ) +1}{\left[ x^{2\alpha }+2x^{\alpha }\cos ( \alpha \pi )
+1\right] ^{1/2}}.
\end{eqnarray*}
\end{proof}

Then we have the general description of the EPPF.

\begin{prop}Suppose $\gamma_{\alpha}$ is specified
as~(\ref{Lam}) for $\theta>-\alpha.$ Then, for $n=2,3\ldots,$ and
$k=1,\ldots,n,$
$$
V_{n,k}=C_{\alpha,\theta}\frac{\alpha^{k-1}\Gamma(\theta/\alpha+k)}{\Gamma(\theta+n-1)}\int_{0}^{1}(
1-x) ^{n+\theta -2}\Delta_{\theta+k\alpha}(
x|F_{X_{\alpha}}) dx
$$
\begin{enumerate} \item[(i)] In particular, when $\theta=0$, we
obtain
\begin{equation}
\label{secondV} V_{n,k}= \frac{\alpha^{k-1}\Gamma(k)}{\Gamma(n-1)}
\sum_{j=0}^{k}\binom{k}{j}\sin \left( \frac{\pi}{2}( k-j)\right)
\varphi^{(n,k)}_{j}
\end{equation}
where
$$
\varphi^{(n,k)}_{j}=\frac{2\pi(1+\cos(\alpha\pi))}
{\sin(\pi\alpha)}\int_{0}^{1}{(1-x)}^{n-2}\vartheta^{(n,k)}_{j}(x)dx.
$$
Note that $\vartheta^{(n,k)}_{j}(x)>0$ for $0<x<1.$
\item[(ii)] When $\theta=0$ and $n>1,$
\begin{equation}
\label{firstV}
V_{n,1}=\frac{2(1+\cos(\alpha\pi))}{\Gamma(n-1)}\int_{0}^{1}\frac{{(1-x)}^{n-2}x^{\alpha}}{\left[
x^{2\alpha }+2x^{\alpha }\cos ( \alpha \pi ) +1\right]}dx.
\end{equation}
\end{enumerate}
\end{prop}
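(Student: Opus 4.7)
The plan is to derive the general formula by integrating the conditional EPPF against the mixing density, identifying an inner integral over $t$ as (a constant multiple of) the Lamperti-type density $f_{X_{\alpha,\theta+k\alpha}}$ from James~\cite{JamesLamp}, and then using the integral representation~(\ref{Jamesden}) of that density together with a single integration by parts.

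First I would start from $V_{n,k}=\int_{0}^{\infty}\mathbb{G}^{(n,k)}_{\alpha}(t)\gamma_{\alpha}(t)\,dt$ using the first form of $\mathbb{G}^{(n,k)}_{\alpha}$ in~(\ref{bigG}) and substitute $\gamma_{\alpha}(t)=C_{\alpha,\theta}\,t^{1-\theta}f_{\alpha}(t)^{2}$. One factor of $f_{\alpha}(t)$ cancels against the $f_{\alpha}(t)$ in the denominator of $\mathbb{G}^{(n,k)}_{\alpha}$. After the change of variable $u=v/t$ in the inner $v$-integral and Fubini, the resulting $t$-integral takes the form
$$\int_{0}^{\infty}t^{1-(\theta+k\alpha)}f_{\alpha}(t)f_{\alpha}\bigl(t(1-u)\bigr)\,dt.$$
Because the density of $X_{\alpha,\theta'}=S_{\alpha}/S_{\alpha,\theta'}$ at $y$ equals (up to an explicit Gamma constant coming from~(\ref{stabletemp})) $\int_{0}^{\infty}t^{1-\theta'}f_{\alpha}(t)f_{\alpha}(yt)\,dt$, I can identify the above, with $\theta'=\theta+k\alpha$ and $y=1-u$, as a multiple of $f_{X_{\alpha,\theta+k\alpha}}(1-u)$.

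Next I would substitute the representation~(\ref{Jamesden}) for $f_{X_{\alpha,\theta+k\alpha}}(1-u)$, switch the resulting order of integration, and evaluate the inner integral as a Beta integral contributing the factor $\Gamma(n-k\alpha)\Gamma(\theta+k\alpha)/\Gamma(n+\theta)$. What remains is $\int_{0}^{1}(1-x)^{n+\theta-1}\Delta'_{\theta+k\alpha}(x)\,dx$, which integration by parts converts into $(n+\theta-1)\int_{0}^{1}(1-x)^{n+\theta-2}\Delta_{\theta+k\alpha}(x)\,dx$: the boundary terms vanish because $\Delta_{\theta+k\alpha}(0)=0$ (since $F_{X_{\alpha}}(0)=0$ makes $\sin(\pi(\theta+k\alpha)F_{X_{\alpha}}(0))=0$) and, for $n+\theta>1$, $(1-x)^{n+\theta-1}$ vanishes at $x=1$. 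A short bookkeeping of Gamma factors using $\Gamma(\theta/\alpha+k+1)/(\theta+k\alpha)=\alpha^{-1}\Gamma(\theta/\alpha+k)$ collapses the prefactor to the stated $C_{\alpha,\theta}\alpha^{k-1}\Gamma(\theta/\alpha+k)/\Gamma(\theta+n-1)$.

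Parts (i) and (ii) then follow by direct specialization. For (i), set $\theta=0$ and expand $\Delta_{k\alpha}$ via Lemma~\ref{lemDelta} as a weighted sum of the $\vartheta^{(n,k)}_{j}$; the constant $C_{\alpha,0}=2\pi(1+\cos(\pi\alpha))/\sin(\pi\alpha)$ is absorbed into the definition of $\varphi^{(n,k)}_{j}$. For (ii), take $k=1$: the sum in Lemma~\ref{lemDelta} collapses to the single nonzero term $\vartheta^{(n,1)}_{0}(x)=(\sin(\pi\alpha)/\pi)\,x^{\alpha}/[x^{2\alpha}+2x^{\alpha}\cos(\alpha\pi)+1]$, whose surviving trigonometric factor cancels against $C_{\alpha,0}$ to yield~(\ref{firstV}). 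The main obstacle I expect is cleanly identifying the $t$-integral with the Lamperti density $f_{X_{\alpha,\theta+k\alpha}}$ and correctly tracking the Gamma constants along the way; once that is done the remaining steps are a routine Beta integral and one integration by parts.
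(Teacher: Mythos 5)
Your proposal is correct and follows essentially the same route as the paper's own proof: mix $\mathbb{G}^{(n,k)}_{\alpha}$ against $\gamma_{\alpha}$, recognize the $t$-integral as $\frac{\Gamma(\theta/\alpha+k+1)}{\Gamma(\theta+k\alpha+1)}f_{X_{\alpha,\theta+k\alpha}}$, insert~(\ref{Jamesden}), evaluate the Beta integral, integrate by parts, and specialize via Lemma~\ref{lemDelta}. The only differences are cosmetic (you start from the first form of~(\ref{bigG}) rather than the second, and you justify the vanishing boundary terms explicitly, which the paper leaves implicit).
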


\begin{proof} It is easy enough to work directly with the
expression
$$
\mathbb{G}^{(n,k)}_{\alpha}(t)=\frac{\alpha^{k}t^{-k\alpha}}{\Gamma(n-k\alpha)f_{\alpha}(t)}
\left[\int_{0}^{1}f_{\alpha}(tu){(1-u)}^{n-k\alpha-1}du\right].
$$
Mixing relative to $\gamma_{\alpha}(t)$ defined
in~(\ref{Lam}), one sees that
$$
\int_{0}^{\infty}t^{1-(\theta+k\alpha)}f_{\alpha}(tu)f_{\alpha}(t)dt=\frac{\Gamma(\theta/\alpha+k+1)}{\Gamma(\theta+k\alpha+1)}
f_{X_{\alpha,\theta+k\alpha}}(u).
$$
Now noting the form of the density of $X_{\alpha,\theta+k\alpha}$
from~(\ref{Jamesden}) and integrating with respect to $u$ lead to
the evaluation of the integral
$$
\int_{x}^{1}(u-x)^{\theta+k\alpha-1}{(1-u)}^{n-k\alpha-1}du=
{(1-x)}^{n+\theta-1}\frac{\Gamma(\theta+k\alpha)\Gamma(n-k\alpha)}{\Gamma(\theta+n)}.
$$
Ignoring constants for a moment this leads to an integral
of the form
$$
\int_{0}^{1}{(1-x)}^{n+\theta-1}\Delta'_{\theta+k\alpha}(x)dx
$$
Now, since $\theta>-\alpha$, it follows that $n+\theta>1$ when
$n=2,3,\ldots.$ This allows us to use integration by parts to get
$$
\int_{0}^{1}{(1-x)}^{n+\theta-1}\Delta'_{\theta+k\alpha}(x)dx=(n+\theta-1)\int_{0}^{1}{(1-x)}^{n+\theta-2}\Delta_{\theta+k\alpha}(
x|F_{X_{\alpha}}) dx
$$
which yields the general expression for $\theta>-\alpha$.
Statements~(i) and~(ii) then follow from an application of
Lemma~\ref{lemDelta}, and also the use of~(\ref{denX}).
\end{proof}
When $\alpha=m/r$, we may easily express $V_{n,k}$ in terms
of $G$-functions, which we leave to the reader. We focus on
two interesting cases.

\subsection{$_2F_1$ EPPF}\label{sec:2F1} When $\alpha=1/2$, we have that for
$\theta>-1/2$,
$$
C_{1/2,\theta}=\frac{2^{1-\theta}\pi}{\Gamma(\theta+1)}.
$$
One can then show that
$$
V_{n,k}=\frac{2^{\theta+1}}{\Gamma(\theta+1)}G^{1,2}_{2,2} \left(1
 \left|
\begin{matrix} -\theta-\frac{k}{2}, -\theta-\frac{k}{2}+\frac{1}{2}\\ \\
0,-\theta-n+\frac{1}{2}\end{matrix} \right. \right).
$$
Now applying ~(\ref{AppGhyper}) or identity~(2.9.15)
in~\cite{Kilbas04}, the last $G$-function reduces to
$$
\frac{\Gamma(\theta+\frac{k}{2}+1)\Gamma(\theta+\frac{k}{2}+\frac{1}{2})
} {\Gamma( \theta+n+\frac{1}{2})} \,_2F_1 \left(
\theta+\frac{k}{2}+1,\theta+\frac{k}{2}+\frac{1}{2};\theta+n+\frac{1}{2}
;-1\right),
$$
where
$$
    _2F_1\left(
    a,b;c
;x\right)=\sum_{i=0}^\infty \frac{[a]_i[b]_i}{[c]_i}
\frac{z^i}{i!}=\frac{\Gamma(c)}{\Gamma(b)\Gamma(c-b)}
\int_0^1\frac{
r^{b-1}(1-r)^{c-b-1}} {\left(1-xr\right)^{a}}dr
$$
is the Gauss hypergeometric function~(see \cite{Oberhettinger65}),
which is a special case of~(\ref{pFq}). Note this result can also
be checked by using the explicit density of $X_{1/2,\theta}.$

\subsection{$_3F_2$ EPPF}\label{sec:3F2} When $\alpha=1/3$, that is, $m=1$ and
$r=3$, one gets
$$
V_{n,k}=\frac{C_{1/3,\theta}3^{3\theta}}{[\Gamma(1/3)\Gamma(2/3)]^2}G^{2,3}_{3,3}
\left(1
 \left|
\begin{matrix} -\frac{k}{3}-\frac{2}{3}, -\frac{k}{3}-\frac{1}{3},
-\frac{k}{3}\\ \\
\theta-\frac{2}{3}, \theta-\frac{1}{3},-n\end{matrix} \right.
\right).
$$
Furthermore, the $G$-function reduces to
$$
\dfrac{\Gamma(\theta+\frac{k}{3}+1) \prod_{i=1}^4
\Gamma(\theta+\frac{k}{3}+\frac{i}{3})}
{\Gamma(\frac{1}{3})\Gamma(\frac{2}{3})\Gamma(n+\theta+\frac{1}{3})
\Gamma(2(\theta+\frac{k+3}{3}))} \,_3F_2\left(\begin{array}{c}
    \theta+\frac{k+2}{3},n-\frac{k}{3},\theta+\frac{k+3}{3}\\
    \\
    n+\theta+\frac{1}{3},2(\theta+\frac{k+3}{3}) \\
\end{array}
;1\right).
$$

\section{A recursive method for calculating EPPF's and Fox
$H$-transforms}\label{sec:recursion}

Definition~3 or equation~(8) of Gnedin and
Pitman~\cite{Gnedin06} establishes the following backward
recursion for all $V_{n,k}$, $n=1,2,\ldots;k=1,2,\ldots,n$,
\begin{equation}\label{backward}
V_{n,k} = (n-k\alpha) V_{n+1,k} + V_{n+1, k+1}
\end{equation}
with $V_{1,1}=1$.

One can turn~(\ref{backward}) around to obtain the following
forward recursion,
\begin{equation}
V_{n+1, k+1} = V_{n, k} - (n-k\alpha) V_{n+1, k}. \label{forward}
\end{equation}

The key point about the recursion~(\ref{forward}) is that
it enables computations of all $V_{n,k}$ for
$n=1,2,\ldots;k=1,2,\ldots,n$, provided that
$$
V_{n,1}=\frac{1}{\Gamma(n)}
\mathbb{E}\left[h\left(\frac{S_{\alpha}}{\B_{1,n-1}}\right)\right],
$$
for $n=1,2,\ldots$, are known. For clarity, we demonstrate this
point by considering the case of $n=3.$ When $V_{1,1},V_{2,1}$ and
$V_{3,1}$ are given, one can obtain all $V_{n,k}$ for
$n=1,2,3,k=1,\ldots,n$ as follows.
\begin{enumerate}
    \item[(i)] compute $V_{2,2} = V_{1,1} - (1-\alpha) V_{2,1}$;
    \item[(ii)] compute $V_{3,2} = V_{2,1} - (2-\alpha) V_{2,2}$;
    and
    \item[(iii)] compute $V_{3,3} = V_{2,2} - (2-2\alpha)
    V_{3,2}$.
\end{enumerate}

So quite fortunately we can focus on the relatively simpler task of
calculating $V_{n,1}.$ The simplicity occurs because the quantities
only depend on the distribution of the independent pairs
$(S_{\alpha},\B_{1,n-1})$, or equivalently,
$(S_{\alpha,\alpha},\B_{\alpha,n-\alpha}).$ In particular, when
$S_{\alpha}$ does not depend on $k$ or $n,$ we show some interesting
applications of the recursion in section~\ref{sec:EPPFMar}. One may
also see how this applies to the explicit expressions
in~(\ref{firstV}) and~(\ref{secondV}).

\subsection{An all purpose solution?}\label{sec:all}
It is widely believed that beyond the infinite series representation
of $f_{\alpha}$, there are no general explicit representations of
$f_{\alpha}.$ In fact this is false as one may use the
representation of Kanter~\cite{Kanter}, which we now describe.
Setting
$$
\mbox{K}_{\alpha}(u)={\left[\frac{\sin(\pi\alpha
u)}{\sin(\pi
u)}\right]}^{-\frac{1}{1-\alpha}}{\left[\frac{\sin((1-\alpha)
\pi u)}{\sin(\pi \alpha u)}\right]},
$$
it follows from Kanter~\cite{Kanter}~(see also
Devroye~\cite{DevroyeOne} and Zolotarev~\cite{Zolotarev86}) that
\begin{equation}
f_{\alpha}(s)=\frac{\alpha}{1-\alpha}s^{-1/(1-\alpha)}\int_{0}^{1}{\mbox
e}^{-{s}^{-\frac{\alpha}{1-\alpha}}\mbox{\scriptsize
K}_{\alpha}(u)}\mbox{K}_{\alpha}(u)du. \label{kan}
\end{equation}
That is,
\begin{equation}
S_{\alpha}\overset{d}={(\mbox{K}_{\alpha}(U)/G_{1})}^{(1-\alpha)/\alpha},
\label{StableSim}
\end{equation}
where $U$ is a uniform random variable on $[0,1]$, independent of a
standard exponential variable $G_{1}.$ So, using~(\ref{StableSim}),
one can obtain calculations for $V_{n,1}$ and hence $V_{n,k}$ by
simulating $U,G_{1}$ and a $\B_{1,n-1}$ variable. In addition, one
can certainly obtain new integral representations for $V_{n,1}$ and
$V_{n,k}.$

However, while this is certainly true in principle, and useful in
some cases, there is still quite a bit lacking in this
representation from an analytic point of view. For instance, it is
not obvious how to use~(\ref{kan}) to obtain the Laplace transform
of $S_{\alpha}$ or to obtain the nice form of the density of
$X_{\alpha}.$ In addition, one would require that $h(t)$ has a
manageable form, which is not the case for instance in
section~\ref{sec:EPPFMar}. Recall that even the known tractable case
of $\alpha=1/2$ does not always immediately lead to nice expressions
for $V_{n,1}.$ We will see in the next sections that one can do
quite nicely without resorting to~(\ref{kan}).

\section{Classes of EPPF's via Probability
Transforms}\label{sec:EPPFprobtran} The first sections focused on
the fact that we could use the fact that Mejier $G$-transforms can
now be easily computed to calculate $V_{n,k}$ and related
quantities. We also noted that while in theory the $V_{n,k}$ can be
calculated based on $H$-transforms, the practical tools to do this
are not yet available as not all $H$-functions can be computed
easily without resorting to numerical integrations. We now
demonstrate through using probability distribution theory how to
calculate $V_{n,1}$ and hence by the recursion all $(V_{n,k}).$ This
in turn will provide new methods to calculate many $H$-transforms as
well as $G$-transforms. We will focus on three general classes, but
certainly more can be constructed.

\subsection{Lamperti Class}\label{sec:Lamperti}

Here, we introduce an entire class of EPPF's based on
$X_{\alpha}$ and $X_{\alpha,k\alpha}.$ First, for a
positive integrable function $g,$ let
$$
1/L_{\alpha}=\frac{\sin(\pi
\alpha)}{\pi}\int_{0}^{\infty}\frac{g(y)y^{\alpha-1}dy}{y^{2\alpha}+2y^{\alpha}\cos(\pi
\alpha)+1}=\mathbb{E}[g(X_{\alpha})].
$$

\begin{prop}\label{thmLam1}
Suppose that for each $0<\alpha<1,$
$h(t)=L_{\alpha}\mathbb{E}[g(S'_{\alpha}/t)].$ That is,
$$
\gamma_{\alpha}(t)=L_{\alpha}\mathbb{E}[g(S'_{\alpha}/t)]f_{\alpha}(t).
$$
Then, \PK$(\rho_{\alpha},\gamma_{\alpha})$ has the following
properties.
\begin{enumerate}
\item[(i)]
$V_{n,1}=\dfrac{1}{\Gamma(n)}L_{\alpha}\mathbb{E}[g(X_{\alpha}\B_{1,n-1})]$.
\item[(ii)] For $k=1,2,\ldots,n$,
$$
V_{n,k}=\frac{\alpha^{k-1}\Gamma(k)}{\Gamma(n)}L_{\alpha}
\mathbb{E}[g(X_{\alpha,k\alpha}\B_{k\alpha,n-k\alpha})].
$$
\item[(iii)] Equivalently,
$$
V_{n,k}=\frac{\alpha^{k-1}\Gamma(k)}{\Gamma(n)}L_{\alpha}\mathbb{E}\left[
g\left(\B_{k,n-k}\sum_{j=1}^{k}X^{(j)}_{\alpha}D_{j}\right)\right]
$$
where $(X^{(j)}_{\alpha})$ are iid random variables equal in
distribution to $X_{\alpha}$ and, independently, $(D_{1},\ldots
D_{k})$ is a $k$-variate Dirichlet$(1,\ldots,1)$ random vector.
\end{enumerate}
\end{prop}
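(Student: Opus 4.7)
The plan is to begin from the unconditional Gibbs formula of Section~\ref{sec:uncondEPPF},
$$
V_{n,k} = \frac{\alpha^{k-1}\Gamma(k)}{\Gamma(n)}\mathbb{E}\left[h\left(\frac{S_{\alpha,k\alpha}}{\B_{k\alpha,n-k\alpha}}\right)\right],
$$
and substitute the specific form $h(t)=L_{\alpha}\mathbb{E}[g(S'_{\alpha}/t)]$. With $S'_{\alpha}$ chosen to be independent of $(S_{\alpha,k\alpha},\B_{k\alpha,n-k\alpha})$, Fubini merges the two expectations into
$$
V_{n,k} = \frac{\alpha^{k-1}\Gamma(k)L_{\alpha}}{\Gamma(n)}\mathbb{E}\left[g\left(\frac{S'_{\alpha}\B_{k\alpha,n-k\alpha}}{S_{\alpha,k\alpha}}\right)\right].
$$
By definition~(\ref{ratio}), and because $S'_{\alpha}\overset{d}=S_{\alpha}$ is independent of $S_{\alpha,k\alpha}$, the ratio $S'_{\alpha}/S_{\alpha,k\alpha}$ has the distribution of $X_{\alpha,k\alpha}$. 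This proves (ii).

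For (i), I would apply the same substitution to the $k=1$ instance $V_{n,1}=\Gamma(n)^{-1}\mathbb{E}[h(S_{\alpha}/\B_{1,n-1})]$ supplied by Theorem~\ref{thm1condEPPF}, obtaining the expectation of $g(S'_{\alpha}\B_{1,n-1}/S_{\alpha})$; the iid symmetry $S'_{\alpha}/S_{\alpha}\overset{d}=X_{\alpha}$ then yields the asserted form. Parts (i) and (ii) amount to routine bookkeeping once the unconditional Gibbs representation is in hand.

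Part (iii) is the substantive step. I would first invoke the beta algebra identity $\B_{k\alpha,n-k\alpha}\overset{d}=\B_{k\alpha,k-k\alpha}\B_{k,n-k}$ with independent factors (valid for $k\leq n$ and confirmed by matching Mellin transforms) to pull $\B_{k,n-k}$ outside $g$ in (ii). The whole claim in (iii) then reduces to the single distributional identity
$$
X_{\alpha,k\alpha}\B_{k\alpha,k-k\alpha}\overset{d}=\sum_{j=1}^{k}X^{(j)}_{\alpha}D_{j}.
$$
For $k=1$ this is immediate from identity~(\ref{key2}) at $\theta=0$, which gives $S_{\alpha}\overset{d}=S_{\alpha,\alpha}/\B_{\alpha,1-\alpha}$ with independent factors and hence $X_{\alpha,\alpha}\B_{\alpha,1-\alpha}\overset{d}=X_{\alpha}$ after dividing an independent $S'_{\alpha}$ through. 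For general $k$ one may compare Mellin transforms of the two sides, using the explicit moment formula for $S_{\alpha,k\alpha}$ recorded after~(\ref{stabletemp}), standard beta-gamma algebra, and the representation $D_{j}=G_{j}/\sum_{i}G_{i}$ with iid Exp$(1)$ variables $G_{j}$. Alternatively one may realize both sides through a Poisson-Dirichlet PD$(\alpha,k\alpha)$ construction built from $k$ independent $\alpha$-stable subordinators in the spirit of Perman-Pitman-Yor~\cite{PY92,PPY92,PY97} or James~\cite{JamesLamp}.

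The main obstacle is precisely this last distributional identity for general $k$. Parts (i) and (ii) reduce to a single substitution in the master formula combined with the symmetry of iid ratios, whereas (iii) requires a genuine probabilistic identification of the product $X_{\alpha,k\alpha}\B_{k\alpha,k-k\alpha}$ as a Dirichlet$(1,\ldots,1)$-weighted sum of $k$ iid Lamperti ratios.
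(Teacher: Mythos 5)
Your treatment of parts (i) and (ii) matches the paper: both follow by substituting $h(t)=L_{\alpha}\mathbb{E}[g(S'_{\alpha}/t)]$ into the unconditional Gibbs representation $V_{n,k}=\frac{\alpha^{k-1}\Gamma(k)}{\Gamma(n)}\mathbb{E}[h(S_{\alpha,k\alpha}/\B_{k\alpha,n-k\alpha})]$ and recognizing $S'_{\alpha}/S_{\alpha,k\alpha}\overset{d}=X_{\alpha,k\alpha}$. Your reduction of (iii) is also the correct one: via $\B_{k\alpha,n-k\alpha}\overset{d}=\B_{k\alpha,k(1-\alpha)}\B_{k,n-k}$, everything hinges on the single identity $X_{\alpha,k\alpha}\B_{k\alpha,k(1-\alpha)}\overset{d}=\sum_{j=1}^{k}X^{(j)}_{\alpha}D_{j}$, and your verification for $k=1$ from~(\ref{key2}) is fine.

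However, for general $k\ge 2$ your proposal has a genuine gap. The Mellin-transform route cannot work as described: Mellin transforms do not linearize sums, so to compute $\mathbb{E}[(\sum_j X^{(j)}_{\alpha}D_j)^s]$ you would need either integer moments (which do not exist, since $\mathbb{E}[X_{\alpha}^s]<\infty$ only for $|s|<\alpha$, the density~(\ref{denX}) decaying like $y^{-\alpha-1}$) or the full joint law of the summands, which is exactly what is at issue. The alternative "PD$(\alpha,k\alpha)$ from $k$ independent subordinators" is only a pointer, not an argument. The paper closes this gap with the theory of Dirichlet mean functionals: by \cite[Proposition 3.1(ii)]{JamesLamp}, $X_{\alpha,k\alpha}\overset{d}=M_{k\alpha}(F_{X_{\alpha}})$ is a Dirichlet mean of order $k\alpha$; by Theorem 2.1 of~\cite{JamesMean}, multiplying by the independent $\B_{k\alpha,k(1-\alpha)}$ converts this into a Dirichlet mean $M_{k}(F_{X_{\alpha}Y_{\alpha}})$ of integer order $k$ with base distribution the law of $X_{\alpha}Y_{\alpha}$ ($Y_{\alpha}$ Bernoulli$(\alpha)$); Proposition 9 of Hjort and Ongaro~\cite{Hjort} then decomposes an integer-order Dirichlet mean as $\sum_{j=1}^{k}D_{j}M^{(j)}_{1}(F_{X_{\alpha}Y_{\alpha}})$ with iid order-one means and Dirichlet$(1,\ldots,1)$ weights; and finally $M_{1}(F_{X_{\alpha}Y_{\alpha}})\overset{d}=X_{\alpha}$ by \cite[Proposition 3.1(iii)]{JamesLamp}. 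Some such structural input (Dirichlet-mean self-decomposability, or equivalently a Cauchy--Stieltjes/Markov--Krein transform argument that does linearize over the weights) is indispensable here; beta--gamma algebra and moment matching alone will not produce the identity.
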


\begin{proof} Statements~(i) and~(ii) follow from
Theorem~\ref{thm1condEPPF} and the definition,
$$
X_{\alpha,k\alpha}=\frac{S'_{\alpha}}{S_{\alpha,k\alpha}}.
$$
Statement~(iii) amounts to showing that $$
X_{\alpha,k\alpha}\B_{k\alpha,n-k\alpha}\overset{d}=
\B_{k,n-k}\sum_{j=1}^{k}X^{(j)}_{\alpha}D_{j}.
$$
In order to do this, we refer back to the recent work of
James~\cite{JamesLamp,JamesMean}. First, write
$$
\B_{k\alpha,n-k\alpha}\overset{d}=\B_{k,n-k}\B_{k\alpha,k(1-\alpha)}.
$$
Then using the notation and the result in~\cite[Proposition
3.1(ii)]{JamesLamp},
$X_{\alpha,k\alpha}\overset{d}=M_{k\alpha}(F_{X_{\alpha}}).$ That
is, $X_{\alpha,k\alpha}$ is a Dirichlet mean random variable of
order $k\alpha$ indexed by $F_{X_{\alpha}}.$ Let $Y_{\alpha}$ denote
a Bernoulli random variable with success probability $\alpha,$
independent of $X_{\alpha}.$ Now, applying Theorem 2.1 in
James~\cite{JamesMean}, it follows that
$$
\B_{k\alpha,k(1-\alpha)}M_{k\alpha}(F_{X_{\alpha}})
\overset{d}=M_{k}(F_{X_{\alpha}Y_{\alpha}})
$$
where $M_{k}(F_{X_{\alpha}Y_{\alpha}})$ denotes a Dirichlet mean of
order $k$ indexed by the cdf of $X_{\alpha}Y_{\alpha},$
$F_{X_{\alpha}Y_{\alpha}}.$ Again, these specific random variables
are found in~\cite{JamesLamp}. Now an application of Proposition 9
in Hjort and Ongaro~\cite{Hjort}~(see also Proposition 4.5
in~\cite{JamesLamp}) yields
$$
M_{k}(F_{X_{\alpha}Y_{\alpha}})\overset{d}=\sum_{j=1}^{k}D_{j}M^{(j)}_{1}(F_{X_{\alpha}Y_{\alpha}})
$$
where
$M^{(j)}_{1}(F_{X_{\alpha}Y_{\alpha}})\overset{d}=M_{1}(F_{X_{\alpha}Y_{\alpha}})$
are independent. Now from Proposition 3.1(iii) in
James~\cite{JamesLamp}, one has
$$
X_{\alpha}\overset{d}=M_{1}(F_{X_{\alpha}Y_{\alpha}}),
$$
which completes the result.
\end{proof}

In view of this result, we proceed to give an expression for the
density of $X_{\alpha ,k\alpha }\B_{k\alpha ,n-k\alpha }.$ Define
$$
\psi^{(n,k)}_{j}(w)=(n-1)\int_{0}^{1/w}{(1-wx)}^{n-2}\vartheta^{(n,k)}_{j}(x)dx
$$
where $\vartheta^{(n,k)}_{j}(x)$ is given in~(\ref{sumden}).

\begin{lem}\label{lemLam}
For $n=2,3,\ldots$ and $k=1,2,\ldots,$  the density of $X_{\alpha
,k\alpha }\B_{k\alpha ,n-k\alpha }$ in $w$ is given by
$$
(n-1)\int_{0}^{1/w}{(1-wt)}^{n-2}\Delta_{k\alpha}(t|F_{X_{\alpha,k\alpha}})dt
$$
which, from Lemma~\ref{lemDelta}, can also be expressed
as
$$
\sum_{j=0}^{k}\binom{k}{j}\sin \left( \frac{\pi}{2}( k-j)\right)
\psi^{(n,k)}_{j}(w).
$$
\end{lem}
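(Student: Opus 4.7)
The second equality in the lemma follows from the first by inserting the expansion of $\Delta_{k\alpha}(t|F_{X_\alpha})$ furnished by Lemma~\ref{lemDelta} into the integral and interchanging sum and integral, so the substantive task is to establish the first identity for the density $f_T$ of $T:=X_{\alpha,k\alpha}\B_{k\alpha,n-k\alpha}$. (Here the index in the statement appears to read $F_{X_\alpha}$ in order to match the notation of Lemma~\ref{lemDelta}.) A direct attack via the product-density formula combined with the fractional-integral representation~(\ref{Jamesden}) runs into Fubini difficulties when $k\alpha<1$, so I propose instead to verify the identity by matching Mellin transforms on both sides.

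For the left side, $\mathbb{E}[T^s]=\mathbb{E}[X_{\alpha,k\alpha}^s]\,\mathbb{E}[\B_{k\alpha,n-k\alpha}^s]$, with the beta factor standard and the $X_{\alpha,k\alpha}=S'_\alpha/S_{\alpha,k\alpha}$ factor coming from the moment formula displayed just after (\ref{stabletemp}). Using reflection $\Gamma(z)\Gamma(1-z)=\pi/\sin(\pi z)$ to pair up the $\Gamma(1\pm s)$ and $\Gamma(1\pm s/\alpha)$ factors, together with the Pochhammer identity $[s/\alpha+1]_k/(k\alpha+s)=[s/\alpha+1]_{k-1}/\alpha$, should collapse $\mathbb{E}[T^s]$ to
$$\frac{\Gamma(n)\,[s/\alpha+1]_{k-1}\,\Gamma(s+1)\,\sin(\pi s)}{\alpha\,(k-1)!\,\Gamma(n+s)\,\sin(\pi s/\alpha)}.$$
For the right side, Fubini on the region $\{0<t<1/w,\,w>0\}=\{0<w<1/t,\,t>0\}$ followed by the substitution $u=wt$ reduces the Mellin transform of the claimed density to $\Gamma(s+1)\Gamma(n)/\Gamma(s+n)$ times $\int_0^\infty t^{-s-1}\Delta_{k\alpha}(t|F_{X_\alpha})\,dt$. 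Hence the two Mellin transforms match provided $\int_0^\infty t^{-s-1}\Delta_{k\alpha}(t|F_{X_\alpha})\,dt=[s/\alpha+1]_{k-1}\sin(\pi s)/[\alpha(k-1)!\sin(\pi s/\alpha)]$.

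The key step is this last Mellin evaluation. From the explicit formulae for $\sin(\pi\alpha F_{X_\alpha}(t))$ and $\cos(\pi\alpha F_{X_\alpha}(t))$ recorded in the proof of Lemma~\ref{lemDelta}, one finds $\cos(\pi\alpha F_{X_\alpha}(t))+i\sin(\pi\alpha F_{X_\alpha}(t))=(1+t^\alpha e^{i\alpha\pi})/|1+t^\alpha e^{i\alpha\pi}|$, and a short complex manipulation based on $(1+z\omega)^k/|1+z\omega|^{2k}=(1+z\bar\omega)^{-k}$ produces the compact representation
$$\Delta_{k\alpha}(t|F_{X_\alpha})=-\tfrac{1}{\pi}\operatorname{Im}\bigl[(1+t^\alpha e^{i\alpha\pi})^{-k}\bigr].$$
Setting $u=t^\alpha$ turns the desired Mellin transform into $-(\pi\alpha)^{-1}\operatorname{Im}\int_0^\infty u^{-s/\alpha-1}(1+ue^{i\alpha\pi})^{-k}\,du$, and the inner integral equals the classical beta-type value $a^{-c}\Gamma(c)\Gamma(k-c)/\Gamma(k)$ with $c=-s/\alpha$ and $a=e^{i\alpha\pi}$, namely $e^{i\pi s}\Gamma(-s/\alpha)\Gamma(k+s/\alpha)/\Gamma(k)$. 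Taking imaginary parts and applying reflection once more yields exactly the required right-hand side, and Mellin inversion concludes the proof.

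The main obstacle is this last complex Mellin evaluation: one must verify convergence of the beta-type integral in the strip $-k\alpha<\operatorname{Re}(s)<0$, fix the principal branch so that $(e^{i\alpha\pi})^{s/\alpha}=e^{i\pi s}$, and analytically continue in $s$. This can be made rigorous by rotating the contour, using that $1+ue^{i\alpha\pi}$ sits in the open upper half-plane for $u>0$ and $0<\alpha<1$. Everything else is routine gamma-function bookkeeping.
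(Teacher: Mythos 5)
Your argument is correct, but it is a genuinely different route from the paper's. The paper proves the first identity directly: it writes the product density of $X_{\alpha,k\alpha}\B_{k\alpha,n-k\alpha}$ as $\frac{\Gamma(n)w^{k\alpha-1}}{\Gamma(k\alpha)\Gamma(n-k\alpha)}\int_{w}^{\infty}(1-w/y)^{n-k\alpha-1}y^{-k\alpha}f_{X_{\alpha,k\alpha}}(y)\,dy$, uses the reflection identity $f_{1/X_{\alpha,\theta}}(y)=y^{-\theta}f_{X_{\alpha,\theta}}(y)$ and the substitution $r=1/y$, inserts the fractional-integral representation~(\ref{Jamesden}) of $f_{X_{\alpha,k\alpha}}$, evaluates the inner beta integral $\int_{tw}^{1}(1-u)^{n-k\alpha-1}(u-tw)^{k\alpha-1}du$ in closed form, and finishes with an integration by parts to trade $\Delta'_{k\alpha}$ for $\Delta_{k\alpha}$ (the boundary terms vanish since $\Delta_{k\alpha}(0)=0$ and $n\ge 2$). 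You instead verify the identity by matching Mellin transforms on the strip $-k\alpha<\operatorname{Re}(s)<0$; I checked your three displayed formulas — the collapsed form of $\mathbb{E}[T^{s}]$, the reduction of the right-hand Mellin transform to $\Gamma(s+1)\Gamma(n)/\Gamma(s+n)$ times $\int_0^\infty t^{-s-1}\Delta_{k\alpha}(t|F_{X_\alpha})\,dt$, and the compact representation $\Delta_{k\alpha}(t|F_{X_\alpha})=-\pi^{-1}\operatorname{Im}\bigl[(1+t^{\alpha}e^{i\alpha\pi})^{-k}\bigr]$ together with the resulting evaluation $[s/\alpha+1]_{k-1}\sin(\pi s)/[\alpha(k-1)!\sin(\pi s/\alpha)]$ — and all are correct, as is your reading of the subscript as $F_{X_\alpha}$ (the $F_{X_{\alpha,k\alpha}}$ in the statement is a typo). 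The trade-off: the paper's computation is elementary, real-variable, and constructive (it produces the formula rather than confirming it), whereas your method must guess the answer first and needs the convergence/branch care you flag at the end; in exchange it avoids manipulating $\Delta'_{k\alpha}$ entirely and yields as by-products the explicit Mellin transform of $\Delta_{k\alpha}$ and the moments of $X_{\alpha,k\alpha}\B_{k\alpha,n-k\alpha}$, which are of independent interest in this circle of ideas. Your stated worry that the direct product-density route "runs into Fubini difficulties when $k\alpha<1$" is overly pessimistic — the singularity at $u=tw$ is integrable and the paper's interchange is legitimate — but this does not affect the validity of your proof.
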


\begin{proof}
Using standard arguments one can write the density of $X_{\alpha
,k\alpha }\B_{k\alpha ,n-k\alpha }$  as
\begin{equation}
\label{one}
\frac{\Gamma(n)w^{k\alpha-1}}{\Gamma(k\alpha)\Gamma(n-k\alpha)}
\int_{w}^{\infty}(1-w/y)^{n-k\alpha-1}y^{-k\alpha}f_{X_{\alpha,k\alpha}}(y)dy.
\end{equation}
But it is not difficult to show that for any $\theta>-\alpha$,
$$
f_{1/X_{\alpha,\theta}}(y)=y^{-\theta}f_{X_{\alpha,\theta}}(y).
$$
Hence making the change $r=1/y,$ the expression in~(\ref{one}) is a
constant multiplies
$$
w^{k\alpha-1}\int_{0}^{1/w}(1-wr)^{n-k\alpha-1}f_{X_{\alpha,k\alpha}}(r)dr.
$$
Now arguing as in the proof of Lemma~\ref{lemDelta}, based on the
definition of the density of $X_{\alpha,k\alpha}$ provided
by~(\ref{Jamesden}), we will arrive at the integral
$$
w^{-1}\int_{tw}^{1}(1-u)^{n-k\alpha-1}{(u-tw)}^{k\alpha-1}du
$$
where $t$ refers to the argument in $\Delta'_{k\alpha}(t).$ So, it
follows that the density in~(\ref{one}) can be expressed as
$$
\frac{1}{w} \int_{0}^{\frac{1}{w}}( 1-wt) ^{n-1}\Delta'_{k\alpha}(
t) dt,
$$
and the result follows.
\end{proof}

This leads to another description of the quantities in
Proposition~\ref{thmLam1}.

\begin{prop}\label{thmLam2} Suppose that for each $0<\alpha<1,$
$$
\gamma_{\alpha}(t)=L_{\alpha}\mathbb{E}[g(S'_{\alpha}/t)]f_{\alpha}(t).
$$
Then, the EPPF for \PK$(\rho_{\alpha},\gamma_{\alpha})$ is such
that,
\begin{equation}
V_{n,k}= \frac{\alpha^{k-2}\Gamma(k)}{\Gamma(n)}
L_{\alpha}\sum_{j=0}^{k}\binom{k}{j}\sin \left( \frac{\pi}{2}(
k-j)\right) \zeta^{(n,k)}_{\alpha,j}
\end{equation}
where
$$
\zeta^{(n,k)}_{\alpha,j}=\frac{{[\sin(\pi \alpha
)]}^{k-j}}{\pi}\int_{0}^{\infty }\mathbb{E}\left[
g\left( \frac{\B_{1,n-1}}{r^{1/\alpha}}\right) \right] \frac{\left( r\cos \left( \pi \alpha \right) +1\right) ^{j}r^{\left( k-j\right) -1}%
}{\left[ r^{2}+2r\cos \left( \alpha \pi \right) +1\right] ^{k}}dr.
$$
\end{prop}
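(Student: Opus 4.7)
The plan is to bridge Proposition~\ref{thmLam1}(ii) with Lemma~\ref{lemLam} by computing the expectation $\mathbb{E}[g(X_{\alpha,k\alpha}\B_{k\alpha,n-k\alpha})]$ as an explicit integral against the density given in the lemma, and then by a two-step change of variable rearranging it into the form of $\zeta^{(n,k)}_{\alpha,j}$.

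First I would write
$$
V_{n,k}=\frac{\alpha^{k-1}\Gamma(k)}{\Gamma(n)}L_{\alpha}\int_{0}^{\infty} g(w)\,f_{X_{\alpha,k\alpha}\B_{k\alpha,n-k\alpha}}(w)\,dw,
$$
and substitute the sum representation of the density from Lemma~\ref{lemLam}. This reduces the problem to evaluating $\int_{0}^{\infty}g(w)\psi^{(n,k)}_{j}(w)\,dw$ for each $j$. Unfolding the definition of $\psi^{(n,k)}_{j}$ and applying Fubini (justified because $g\ge 0$ and $\vartheta^{(n,k)}_{j}\ge 0$ so all integrands are nonnegative), this quantity becomes
$$
(n-1)\int_{0}^{\infty}\vartheta^{(n,k)}_{j}(x)\left[\int_{0}^{1/x}g(w)(1-wx)^{n-2}\,dw\right]dx.
$$

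Next I would perform the substitution $w=u/x$ in the inner integral. This converts $(1-wx)^{n-2}\,dw$ into $(1-u)^{n-2}x^{-1}du$ on $0<u<1$, which is (up to the normalizing factor $n-1$) the density of a $\B_{1,n-1}$ variable. Hence the inner integral equals $\tfrac{1}{x(n-1)}\mathbb{E}[g(\B_{1,n-1}/x)]$, and the factor $(n-1)$ cancels. Then, in the remaining single integral, I would make the substitution $r=x^{\alpha}$, so that $dx/x=dr/(\alpha r)$, the quotient in $\vartheta^{(n,k)}_{j}(x)$ becomes the quotient in $r$ appearing in $\zeta^{(n,k)}_{\alpha,j}$, and $x^{\alpha(k-j)}$ becomes $r^{k-j}$. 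A factor $1/\alpha$ is produced, which combines with $\alpha^{k-1}$ from Proposition~\ref{thmLam1}(ii) to give the $\alpha^{k-2}$ in the target formula.

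Collecting terms gives
$$
\int_{0}^{\infty}g(w)\psi^{(n,k)}_{j}(w)\,dw=\frac{1}{\alpha}\,\zeta^{(n,k)}_{\alpha,j},
$$
and plugging back into the sum from Lemma~\ref{lemLam} yields the claimed expression for $V_{n,k}$. The only mild obstacle is clerical: tracking the Jacobians under the two substitutions carefully so that the exponents $r^{(k-j)-1}$ and the factor $\alpha^{k-2}$ come out exactly, and justifying Fubini (which follows from nonnegativity and the integrability inherent in Lemma~\ref{lemLam}, since $\gamma_{\alpha}$ is a probability density so the left-hand side $V_{n,k}$ is finite to begin with).
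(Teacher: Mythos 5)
Your proposal is correct and follows exactly the route the paper takes: the paper's own proof is the one-line remark that the result ``follows from Lemma~\ref{lemLam} and Proposition~\ref{thmLam1} and applying a change of variable,'' and your argument is precisely that, with the Fubini step and the two substitutions ($w=u/x$ producing the $\B_{1,n-1}$ expectation, then $r=x^{\alpha}$ producing the extra $1/\alpha$) carried out explicitly and correctly. The bookkeeping checks out, in particular the cancellation of the $(n-1)$ factor against the Beta$(1,n-1)$ normalization and the emergence of $\alpha^{k-2}$ from $\alpha^{k-1}\cdot\alpha^{-1}$.
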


\begin{proof}This follows from Lemma~\ref{lemLam} and
Proposition~\ref{thmLam1} and applying a change of variable.
\end{proof}

\subsubsection{Example: Mittag Leffler and generalized Mittag
Leffler functions} This example is also influenced by some results
in~\cite{JamesLamp} and concerns the Mittag Leffler function and
some of its generalizations. Recall that the Mittag Leffler function
may be defined as
$$\textsc{E}_{\alpha,1}(-\lambda)=\mathbb{E}[{\mbox
e}^{-\lambda
S^{-\alpha}_{\alpha}}]=\sum_{l=0}^{\infty}\frac{{(-\lambda)}^{l}}{\Gamma(1+l\alpha)}=\mathbb{E}[{\mbox
e}^{-\lambda^{1/\alpha}X_{\alpha}}].
$$
Furthermore, there is an integral representation
$$
\textsc{E}_{\alpha,1}(-\lambda)=\frac{\sin(\pi
\alpha)}{\pi}\int_{0}^{\infty}\frac{{\mbox
e}^{-\lambda^{1/\alpha}
y}y^{\alpha-1}}{y^{2\alpha}+2y^{\alpha}\cos(\pi
\alpha)+1}dy
$$
which allows one to compute the Mittag Leffler function.
Now, define
\begin{equation}
\textsc{E}^{(k+1)}_{\alpha,1+k\alpha}(-\lambda)=\sum_{l=0}^{\infty}\frac{{(-\lambda)}^{l}}{l!}
\frac{{[k+1]}_{l}} {\Gamma(1+k\alpha+l\alpha )}.\label{altM}
\end{equation}
Theorem 7.1 in James~\cite{JamesLamp} shows that
\begin{equation}
\textsc{E}^{(k+1)}_{\alpha,1+k\alpha}(-\lambda)=\mathbb{E}[{\mbox
e}^{-\lambda^{1/\alpha}X_{\alpha,k\alpha}}]. \label{two}
\end{equation}

In this section set
$L_{\alpha,\lambda}=1/\textsc{E}_{\alpha,1}(-\lambda)$.

\begin{prop}\label{thmLam3} Let, for $0<\alpha<1,$
$(\gamma_{\alpha,\lambda}:\lambda>0)$ denote the family of
densities each defined as
$$
\gamma_{\alpha,\lambda}(t)=L_{\alpha,\lambda}{\mbox
e}^{-\lambda/t^{\alpha}}f_{\alpha}(t).$$ Then, the EPPF's of the
PK$(\rho_{\alpha},\gamma_{\alpha,\lambda})$ family satisfy, for each
fixed $(\alpha,\lambda),$
\begin{enumerate}
\item[(i)]$V_{n,1}=\dfrac{1}{\Gamma(n)}L_{\alpha,\lambda}
\mathbb{E}[\textsc{E}_{\alpha,1}(-{(\B_{1,n-1})}^{\alpha}\lambda)]$.
\item[(ii)] For $k=1,2,\ldots,n$,
$$
V_{n,k}=\frac{\alpha^{k-1}\Gamma(k)}{\Gamma(n)}L_{\alpha,\lambda}
\mathbb{E}[\textsc{E}^{(k+1)}_{\alpha,1+k\alpha}
(-{(\B_{k\alpha,n-k\alpha})}^{\alpha}\lambda)].
$$
\item[(iii)]Equivalently,
$$
V_{n,k}=\frac{\alpha^{k-1}\Gamma(k)}{\Gamma(n-k)}L_{\alpha,\lambda}
\int_{0}^{1}\left[\int_{\mathcal{S}_{k}}\prod_{l=1}^{k}\textsc{E}_{\alpha,1}
(-p^{\alpha}_{l}b^{\alpha}\lambda)dp_{l}\right]b^{k-1}{(1-b)}^{n-k-1}db
$$
where
$\mathcal{S}_{k}=\{(p_{1},\ldots,p_{k}):0<\sum_{i=1}^{k}p_{i}\leq
1\}$.
\item[(iv)] $V_{n,k}$ is described by Proposition~\ref{thmLam2} with
$$
\zeta^{(n,k)}_{\alpha,j}=\frac{{[\sin(\pi \alpha
)]}^{k-j}}{\pi}\int_{0}^{\infty }\mathbb{E}[
{\mbox e}^{-{\lambda^{1/\alpha}\B_{1,n-1}}/{r^{1/\alpha}}}] \frac{\left( r\cos \left( \pi \alpha \right) +1\right) ^{j}r^{\left( k-j\right) -1}%
}{\left[ r^{2}+2r\cos \left( \alpha \pi \right) +1\right] ^{k}}dr.
$$
\end{enumerate}
\end{prop}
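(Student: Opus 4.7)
The strategy is to recognize the family $(\gamma_{\alpha,\lambda})$ as a specific instance of the Lamperti class of Proposition~\ref{thmLam1}--\ref{thmLam2}, corresponding to the exponential kernel $g(y) = e^{-\lambda^{1/\alpha} y}$. First I would verify this identification: by the stable Laplace transform $\mathbb{E}[e^{-\mu S_{\alpha}}] = e^{-\mu^{\alpha}}$ applied with $\mu = \lambda^{1/\alpha}/t$, one has $\mathbb{E}[g(S'_{\alpha}/t)] = e^{-\lambda/t^{\alpha}}$, so the factor $h(t) = L_{\alpha}\,\mathbb{E}[g(S'_{\alpha}/t)]$ of Proposition~\ref{thmLam1} reduces exactly to $L_{\alpha}e^{-\lambda/t^{\alpha}}$. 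The normalizing constant is $1/L_{\alpha} = \mathbb{E}[g(X_{\alpha})] = \mathbb{E}[e^{-\lambda^{1/\alpha} X_{\alpha}}] = \textsc{E}_{\alpha,1}(-\lambda)$ by the defining integral representation of the Mittag-Leffler function, so $L_{\alpha}$ coincides with $L_{\alpha,\lambda}$ as defined in the statement.

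With this identification, parts (i) and (ii) follow by substituting $g$ into Proposition~\ref{thmLam1}(i)--(ii) and conditioning on the beta factors. For (i): conditioning on $\B_{1,n-1}$,
$$
\mathbb{E}[g(X_{\alpha}\B_{1,n-1})] = \mathbb{E}\bigl[\mathbb{E}[e^{-(\lambda^{1/\alpha}\B_{1,n-1})X_{\alpha}}\mid \B_{1,n-1}]\bigr] = \mathbb{E}[\textsc{E}_{\alpha,1}(-\B_{1,n-1}^{\alpha}\lambda)],
$$
using $\mathbb{E}[e^{-\mu^{1/\alpha}X_{\alpha}}] = \textsc{E}_{\alpha,1}(-\mu)$ with $\mu = \B_{1,n-1}^{\alpha}\lambda$. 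For (ii), the identical argument with $X_{\alpha,k\alpha}$ in place of $X_{\alpha}$ works, now invoking the generalized Mittag-Leffler identity~(\ref{two}), $\mathbb{E}[e^{-\mu^{1/\alpha}X_{\alpha,k\alpha}}] = \textsc{E}^{(k+1)}_{\alpha,1+k\alpha}(-\mu)$, with $\mu = \B_{k\alpha,n-k\alpha}^{\alpha}\lambda$.

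For part (iii), I would start from Proposition~\ref{thmLam1}(iii) and condition on $(\B_{k,n-k},D_{1},\ldots,D_{k})$; independence of the $X^{(j)}_{\alpha}$ turns the exponential of the sum into a product of Laplace transforms, each again a Mittag-Leffler evaluation,
$$
\mathbb{E}\Bigl[\prod_{j}e^{-\lambda^{1/\alpha}\B_{k,n-k}D_{j}X^{(j)}_{\alpha}}\Bigm|\B_{k,n-k},D\Bigr] = \prod_{j}\textsc{E}_{\alpha,1}(-(\B_{k,n-k}D_{j})^{\alpha}\lambda).
$$
Unravelling the outer expectation with the Beta$(k,n-k)$ density and the Dirichlet$(1,\ldots,1)$ density $(k-1)! = \Gamma(k)$ on the $(k-1)$-simplex produces the integral displayed in (iii); the $\Gamma(k)$ from the Dirichlet weight combines with $\Gamma(n)/[\Gamma(k)\Gamma(n-k)]$ from Beta and the prefactor $\alpha^{k-1}\Gamma(k)/\Gamma(n)$ of Proposition~\ref{thmLam1}(iii) to collapse cleanly to $\alpha^{k-1}\Gamma(k)/\Gamma(n-k)$. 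Part (iv) is immediate from Proposition~\ref{thmLam2} applied to the same $g$, since $g(\B_{1,n-1}/r^{1/\alpha}) = e^{-\lambda^{1/\alpha}\B_{1,n-1}/r^{1/\alpha}}$ is exactly the integrand in the stated expression for $\zeta^{(n,k)}_{\alpha,j}$. The only subtle step is the constant bookkeeping in (iii); everything else is mechanical once the $g$-identification and identity~(\ref{two}) are in place.
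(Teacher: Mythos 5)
Your proposal is correct and follows exactly the paper's route: identify $g_{\lambda}(x)={\mbox e}^{-\lambda^{1/\alpha}x}$, then read off (i)--(iii) from Proposition~\ref{thmLam1} via the Mittag--Leffler representations (including the identity~(\ref{two})) and (iv) from Proposition~\ref{thmLam2}. The paper's own proof is a terse citation of these same facts; you simply supply more of the verification (the Laplace-transform check that $h(t)=L_{\alpha}{\mbox e}^{-\lambda/t^{\alpha}}$ and the constant bookkeeping in (iii)), all of which is accurate.
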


\begin{proof}
First note that the relevant $g$ function is
$$
 g_{\lambda}(x)={\mbox e}^{-\lambda^{1/\alpha}x}.
$$
Hence statement~(i) follows from statement~(i) in
Proposition~\ref{thmLam1} and the representation of the Mittag
Leffler function. Statement~(ii) follows from~(\ref{two}) and
Proposition~\ref{thmLam1}. Statement~(iii) follows from
statement~(iii) in Proposition~\ref{thmLam1} and statement~(iv)
follows from Proposition~\ref{thmLam2}. Statement~(iii) is also a
special variation of statement~(v) in Theorem 7.1
in~\cite{JamesLamp}.
\end{proof}

\subsection{Beta-Gamma Classes}\label{sec:BetaGamma} The previous section
identifies a large class of EPPF's that can be computed by using the
random variables $X_{\alpha}$ and $X_{\alpha,k\alpha}.$ Compared
with common random variables that one encounters in standard
probability textbooks, these random variables are rather exotic. In
this section we show, also using results that appear
in~\cite{JamesLamp}, that we may define a large, and clearly
important, class where the EPPF's are expressible in terms of
expectations depending only on beta and gamma random variables.

Recall the identity in~(\ref{key2}), which again may be found in
Pitman~\cite[section 4.2]{Pit06} and Perman, Pitman and
Yor~\cite{PPY92}. Statement~(iii) in Proposition 3.2 in
James~\cite{JamesLamp} uses the result to establish the identity
\begin{equation}
G^{1/\alpha}_{\frac{\theta+\alpha}{\alpha}}\overset{d}=\frac{G_{\theta+\alpha}}{S_{\alpha,\theta+\alpha}}\overset{d}=\frac{G_{\theta+1}}{S_{\alpha,\theta}}
\label{key}
\end{equation}
for $\theta>-\alpha.$ James~\cite[section 3.0.1]{JamesLamp} also
explains how this amounts to a rephrasing of an otherwise equivalent
result in Lemma 6 in Bertoin and Yor~\cite{BerYor}. Now let
$$
1/\Sigma_{\alpha,\theta}=\mathbb{E}[g(G_{\theta/\alpha+1}\B^{\alpha}_{\theta+\alpha,1-\alpha})].
$$
Then $(\ref{key})$ suggests the following class.

\begin{prop}\label{thmLam4} Suppose that for each $0<\alpha<1,$ and
$\theta>-\alpha$,
$$
\gamma_\alpha(t)=
\Sigma_{\alpha,\theta}\mathbb{E}\left[g\left(\frac{G^{\alpha}_{\theta+\alpha}}{t^{\alpha}}\right)\right]
f_{\alpha,\theta}(t).
$$
That is,
$$
h(t)=\frac{\Gamma(\theta+1)}{\Gamma(\theta/\alpha+1)}\Sigma_{\alpha,\theta}t^{-\theta}
\mathbb{E}\left[g\left(\frac{G^{\alpha}_{\theta+\alpha}}{t^{\alpha}}\right)\right].
$$
Then, \PK$(\rho_{\alpha},\gamma_{\alpha})$ has the following
properties.
\begin{enumerate}
\item[(i)]For $n=1,2,\ldots,$
$$V_{n,1}=\frac{\Gamma(\theta+1)}{\Gamma(n+\theta)}\Sigma_{\alpha,\theta}\mathbb{E}[g(G_{\theta/\alpha+1}\B^{\alpha}_{\theta+\alpha,n-\alpha})].$$
\item[(ii)] For $k=1,2,\ldots,n$,
$$
V_{n,k}=\frac{\alpha^{k-1}\Gamma(\theta+1)\Gamma(\theta/\alpha+k)}
{\Gamma(\theta/\alpha+1)\Gamma(n+\theta)} \Sigma_{\alpha,\theta}
\mathbb{E}[g(G_{\theta/\alpha+k}\B^{\alpha}_{\theta+\alpha,n-\alpha})].
$$
\end{enumerate}
\end{prop}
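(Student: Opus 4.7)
The plan is to invoke Theorem~\ref{thm1condEPPF}(i) to write $V_{n,k}$ as an expectation over $(S_{\alpha,k\alpha},\B_{k\alpha,n-k\alpha})$ and then reduce the resulting expression to the advertised beta-gamma form through a chain of standard beta-gamma manipulations together with the key identity~(\ref{key}). After substituting the explicit $h$ determined by $\gamma_{\alpha}$, one is led to
\[
V_{n,k} = \frac{\alpha^{k-1}\Gamma(k)\Gamma(\theta+1)\Sigma_{\alpha,\theta}}{\Gamma(n)\Gamma(\theta/\alpha+1)}\,\mathbb{E}\!\left[\B_{k\alpha,n-k\alpha}^{\theta}\,S_{\alpha,k\alpha}^{-\theta}\,g\!\left(\frac{G_{\theta+\alpha}^{\alpha}\B_{k\alpha,n-k\alpha}^{\alpha}}{S_{\alpha,k\alpha}^{\alpha}}\right)\right]
\]
with the three random variables independent.

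I would then perform three successive reweightings. First, the defining density~(\ref{stabletemp}) of $f_{\alpha,\theta}$ implies
\[
\mathbb{E}[S_{\alpha,k\alpha}^{-\theta}\phi(S_{\alpha,k\alpha})] = \frac{\Gamma(k\alpha+1)\Gamma(\theta/\alpha+k+1)}{\Gamma(k+1)\Gamma(\theta+k\alpha+1)}\,\mathbb{E}[\phi(S_{\alpha,\theta+k\alpha})],
\]
converting $S_{\alpha,k\alpha}^{-\theta}$ into a constant while promoting the stable index to $\theta+k\alpha$. Second, the beta-gamma factorisation $G_{\theta+\alpha}\overset{d}= G_{\theta+k\alpha}\,\B_{\theta+\alpha,(k-1)\alpha}$ (independent factors) combined with~(\ref{key}) used at parameter $\theta'=\theta+k\alpha-\alpha$, for which $(\theta'+\alpha)/\alpha=\theta/\alpha+k$, yields
\[
\bigl(G_{\theta+\alpha}/S_{\alpha,\theta+k\alpha}\bigr)^{\alpha}\overset{d}= G_{\theta/\alpha+k}\,\B^{\alpha}_{\theta+\alpha,(k-1)\alpha}.
\]
Third, size-biasing $\B_{k\alpha,n-k\alpha}$ by $u^{\theta}$ gives
\[
\mathbb{E}[\B_{k\alpha,n-k\alpha}^{\theta}\psi(\B_{k\alpha,n-k\alpha})] = \frac{\Gamma(n)\Gamma(\theta+k\alpha)}{\Gamma(k\alpha)\Gamma(n+\theta)}\,\mathbb{E}[\psi(\B_{\theta+k\alpha,n-k\alpha})].
\]

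At this point the argument of $g$ has become $G_{\theta/\alpha+k}\,\B^{\alpha}_{\theta+\alpha,(k-1)\alpha}\,\B^{\alpha}_{\theta+k\alpha,n-k\alpha}$, and the classical beta product identity
\[
\B_{\theta+\alpha,(k-1)\alpha}\,\B_{\theta+k\alpha,n-k\alpha}\overset{d}=\B_{\theta+\alpha,n-\alpha}
\]
collapses the two betas into $\B_{\theta+\alpha,n-\alpha}$. Collecting the gamma constants via $\Gamma(k\alpha+1)/\Gamma(k\alpha)=k\alpha$, $\Gamma(\theta+k\alpha)/\Gamma(\theta+k\alpha+1)=1/(\theta+k\alpha)$, and $\Gamma(\theta/\alpha+k+1)=\bigl((\theta+k\alpha)/\alpha\bigr)\Gamma(\theta/\alpha+k)$, everything telescopes to exactly the prefactor in (ii); part (i) is then the specialization $k=1$, in which $\Gamma(\theta/\alpha+1)$ cancels.

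The main obstacle is not analytic but bookkeeping. At each reweighting one has to verify that the newly introduced variable is independent of everything already in play, and that the many $\Gamma$-ratios telescope cleanly. The one nonobvious choice is invoking~(\ref{key}) at the shifted index $\theta+k\alpha-\alpha$ rather than $\theta$ itself, so that the stable variable left over after the first reweighting is precisely the one the identity consumes, making the $\alpha$-th power of the resulting ratio a gamma with shape $\theta/\alpha+k$ as required.
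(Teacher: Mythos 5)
Your proposal is correct and follows essentially the same route as the paper's own proof: start from Theorem~\ref{thm1condEPPF}(i), absorb the $t^{-\theta}$ weight by tilting $S_{\alpha,k\alpha}$ to $S_{\alpha,\theta+k\alpha}$ and $\B_{k\alpha,n-k\alpha}$ to $\B_{\theta+k\alpha,n-k\alpha}$, then use $G_{\theta+\alpha}\overset{d}=G_{\theta+k\alpha}\B_{\theta+\alpha,(k-1)\alpha}$ together with~(\ref{key}) at the shifted index and the beta product identity to reach $G_{\theta/\alpha+k}\B^{\alpha}_{\theta+\alpha,n-\alpha}$. The constants telescope exactly as you indicate, so nothing further is needed.
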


\begin{proof}It suffices to examine the quantity
$$
\mathbb{E}[S^{-\theta}_{\alpha,k\alpha}\B^{\theta}_{k\alpha,n-k\alpha}\,
g(yS^{-\alpha}_{\alpha,k\alpha}\B^{\alpha}_{k\alpha,n-k\alpha})]
$$
for each $k,$ $g$ and fixed $y.$ Now noting the form of the density
of $S_{\alpha,\theta}$ and that of a beta random variable, it
follows that the above expectation can be written as
$$
\frac{\Gamma(n)\Gamma(\theta/\alpha+k)} {\Gamma(n+\theta)\Gamma(k)}
\mathbb{E}[g(yS^{-\alpha}_{\alpha,\theta+k\alpha}\B^{\alpha}_{\theta+k\alpha,n-k\alpha})].
$$
Now replacing $y$ with $G^{\alpha}_{\theta+\alpha}$ shows that
$$
V_{n,k}=\frac{\alpha^{k-1}\Gamma(\theta+1)\Gamma(\theta/\alpha+k)}
{\Gamma(\theta/\alpha+1)\Gamma(n+\theta)}\Sigma_{\alpha,\theta}
\mathbb{E}[g(G^{\alpha}_{\theta+\alpha}S^{-\alpha}_{\alpha,\theta+k\alpha}
\B^{\alpha}_{\theta+k\alpha,n-k\alpha})].
$$
But, using the calculus of beta and gamma random variables,
it follows that since $\theta+\alpha\leq \theta+k\alpha,$
$$
\frac{G_{\theta+\alpha}}{S_{\alpha,\theta+k\alpha}} \overset{d}=
\B_{\theta+\alpha,(k-1)\alpha}\frac{G_{\theta+k\alpha}}{S_{\alpha,\theta+k\alpha}}.
$$
Additionally,
$$
\B_{\theta+\alpha,(k-1)\alpha}\B_{\theta+k\alpha,n-k\alpha}\overset{d}=
\B_{\theta+\alpha,n-\alpha}.
$$
The result is concluded by applying the identity
in~(\ref{key}).
\end{proof}

Note that one can also work with the other equality
in~(\ref{key}). We do not discuss that here.

\subsubsection{Example: Hermite Type}Note that
$$
\mathbb{E}[{\mbox
e}^{-G^{\alpha}_{\theta+\alpha}/t^{\alpha}}]=
\frac{1}{\Gamma(\theta+\alpha)}\int_{0}^{\infty}{\mbox
e}^{-{(\frac{x}{t})}^{\alpha}}x^{\theta+\alpha-1}{\mbox
e}^{-x}dx
$$
which looks like some sort of generalized Hermite function. We can
use this fact along with Proposition~\ref{thmLam4} to calculate an
EPPF, based on the mixing density, which we deliberately write in
the less obvious form,
\begin{equation}
\label{herm}
\gamma_{\alpha,\lambda}(t)=\frac{\Sigma_{\alpha,\theta}(\lambda)f_{\alpha,\theta}(t)}{\Gamma(\theta+\alpha)}\int_{0}^{\infty}{\mbox
e}^{-{\lambda(\frac{x}{t})}^{\alpha}}x^{\theta+\alpha-1}{\mbox
e}^{-x}dx
\end{equation}
where
$$
1/\Sigma_{\alpha,\theta}(\lambda)=
\frac{\Gamma(\theta+1)}{\Gamma(\theta+\alpha)\Gamma(1-\alpha)}
\int_{0}^{1}(1+\lambda
u^{\alpha})^{-(\theta/\alpha+1)}u^{\theta+\alpha-1}{(1-u)}^{1-\alpha-1}du
$$
which is the same as
$$
\mathbb{E}[{\mbox e}^{-\lambda
G_{\theta/\alpha+1}\beta^{\alpha}_{\theta+\alpha,1-\alpha}}]
$$
Here again
$$
g(x)={\mbox e}^{-x},
$$
which leads to the next result.

\begin{prop} Let $\gamma_{\alpha,\lambda}$ be specified
by~(\ref{herm}) for all $0<\alpha<1$ and $\theta>-\alpha.$ Then, the
EPPF of \PK$(\rho_{\alpha},\gamma_{\alpha,\lambda})$ is specified by
$$
V_{n,k}=\Phi^{(k)}_{\alpha,\theta}(\lambda)\int_{0}^{1}(1+\lambda
u^{\alpha})^{-(\theta/\alpha+k)}u^{\theta+\alpha-1}{(1-u)}^{n-\alpha-1}du
$$
where
$$
\Phi^{(k)}_{\alpha,\theta}(\lambda)=\frac{\alpha^{k-1}\Gamma(\theta+1)\Gamma(\theta/\alpha+k)
\Sigma_{\alpha,\theta}(\lambda)}{\Gamma(\theta/\alpha+1)\Gamma(\theta+\alpha)\Gamma(n-\alpha)}.
$$
\end{prop}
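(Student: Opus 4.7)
The plan is to recognize the mixing density in~(\ref{herm}) as a particular instance of the Beta-Gamma class treated in Proposition 7.4, with the simple choice $g(x)=e^{-\lambda x}$, and then reduce the expectation produced by that proposition to the required integral via an iterated conditioning on beta and gamma variables.

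First I would rewrite the inner integral in~(\ref{herm}) as a gamma expectation: since the density of $G_{\theta+\alpha}$ is $x^{\theta+\alpha-1}e^{-x}/\Gamma(\theta+\alpha)$,
$$\frac{1}{\Gamma(\theta+\alpha)}\int_{0}^{\infty} e^{-\lambda(x/t)^{\alpha}} x^{\theta+\alpha-1}e^{-x}\,dx = \mathbb{E}\!\left[e^{-\lambda G^{\alpha}_{\theta+\alpha}/t^{\alpha}}\right],$$
so that $\gamma_{\alpha,\lambda}(t)=\Sigma_{\alpha,\theta}(\lambda)\,f_{\alpha,\theta}(t)\,\mathbb{E}[g_{\lambda}(G^{\alpha}_{\theta+\alpha}/t^{\alpha})]$ with $g_{\lambda}(x)=e^{-\lambda x}$. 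This places the density exactly in the form required by Proposition 7.4, and the normalization $1/\Sigma_{\alpha,\theta}(\lambda)$ written out in the excerpt is, by the same conditioning argument I describe below applied at $k=1$ with the beta parameters $(\theta+\alpha,1-\alpha)$, precisely $\mathbb{E}[g_{\lambda}(G_{\theta/\alpha+1}\B^{\alpha}_{\theta+\alpha,1-\alpha})]$, matching the proposition's convention.

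Applying Proposition 7.4(ii) with $g=g_{\lambda}$ gives
$$V_{n,k}=\frac{\alpha^{k-1}\Gamma(\theta+1)\Gamma(\theta/\alpha+k)}{\Gamma(\theta/\alpha+1)\Gamma(n+\theta)}\,\Sigma_{\alpha,\theta}(\lambda)\,\mathbb{E}\!\left[e^{-\lambda G_{\theta/\alpha+k}\B^{\alpha}_{\theta+\alpha,n-\alpha}}\right].$$
The remaining task is the evaluation of the expectation. I would first condition on $\B_{\theta+\alpha,n-\alpha}=u$: the Laplace transform of a gamma$(\theta/\alpha+k,1)$ variable gives $(1+\lambda u^{\alpha})^{-(\theta/\alpha+k)}$, so integrating over the beta$(\theta+\alpha,n-\alpha)$ density yields
$$\mathbb{E}\!\left[e^{-\lambda G_{\theta/\alpha+k}\B^{\alpha}_{\theta+\alpha,n-\alpha}}\right] = \frac{\Gamma(n+\theta)}{\Gamma(\theta+\alpha)\Gamma(n-\alpha)}\int_{0}^{1}(1+\lambda u^{\alpha})^{-(\theta/\alpha+k)}u^{\theta+\alpha-1}(1-u)^{n-\alpha-1}\,du.$$

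Substituting this back and cancelling the common factor $\Gamma(n+\theta)$ produces exactly $\Phi^{(k)}_{\alpha,\theta}(\lambda)$ times the stated integral, which finishes the proof. Since Proposition 7.4 is doing the heavy conceptual lifting, there is no genuine obstacle; the only part that requires care is the bookkeeping to verify that the explicit formula for $\Sigma_{\alpha,\theta}(\lambda)$ written in the excerpt really is the Proposition 7.4 normalizer, and this drops out of the same gamma-then-beta conditioning used to evaluate $V_{n,k}$.
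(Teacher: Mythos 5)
Your proposal is correct and follows exactly the route the paper intends: the paper states this result as an immediate consequence of Proposition~\ref{thmLam4} with $g(x)={\rm e}^{-\lambda x}$ (written there as $g(x)={\rm e}^{-x}$ with $\lambda$ absorbed), and your gamma-then-beta conditioning is precisely the bookkeeping the paper leaves to the reader. The verification that the displayed normalizer equals $\mathbb{E}[{\rm e}^{-\lambda G_{\theta/\alpha+1}\B^{\alpha}_{\theta+\alpha,1-\alpha}}]$ and the cancellation of $\Gamma(n+\theta)$ both check out.
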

\subsection{Comment on Composition Classes} Recall that if one evaluates a
stable subordinator at a positive random time, say $T$,
it is equivalent in distribution to
$$
S_{\alpha}T^{1/\alpha}
$$
having Laplace transform
$$
\mathbb{E}[{\mbox e}^{-S_{\alpha}T^{1/\alpha}}]={\mbox
e}^{-\psi(\lambda^{\alpha})}
$$
where ${\mbox e}^{-\psi(\lambda)}$ is the Laplace transform of $T$
evaluated at $\lambda.$ We see that the two classes in the previous
sections are special cases of this where one chooses $T$ in such a
way where we can work more easily with the stable random variable.
It is clear that many models may be derived using this general idea.
That is, based on
$$
h(t)=\mathbb{E}[g(tT^{1/\alpha})].
$$
Furthermore, since $g$ is arbitrary one can create quite a few
variations of this.

\section{EPPF's via ranked functionals of self-similar Markovian excursions}\label{sec:EPPFMar}
We close this paper by looking at a large class of mixing densities
that probably would not immediately come to mind. This is partly
because, as we shall show, the apparently complex form of them. The
mixing distributions are based on the results of the interesting
paper of Pitman and Yor~\cite{PY01}. In fact, in the general case
the densities do not have known closed forms, which means that
$h(t)$ does not have an explicit form and hence brute force
simulation methods or the calculus of $H$- or $G$-functions cannot
be used directly. However, as we shall show, these are members of
the Beta-Gamma class, and we will indeed be able to get explicit
results by exploiting Corollary 7 in Pitman and Yor~\cite{PY01}. For
clarity, we start off with a simpler yet still challenging case.

\subsection{Example: Hyperbolic Tangent and Kolmogorov's Formula}
First note the famous random variable
$$
M^{\textrm{br}}_{1}:=\max_{0\leq u\leq
1}|B^{\textrm{br}}_{u}|
$$
where $B^{\textrm{br}}_{u}$ denotes standard Brownian bridge on
$[0,1].$ One has the Kolmogorov's formula,
$$
\mathbb{P}(M^{\textrm{br}}_{1}\leq
x)=\sum_{l=-\infty}^{\infty}{(-1)}^{l}e^{-2l^{2}{x}^{2}}.$$
Furthermore, one has the remarkable formula
\begin{equation}
\mathbb{P}(|B_{1}|M^{\textrm{br}}_{1}\leq
y)=\tanh(y),\label{tanform}\end{equation} where $B_{1}$ is a
standard Gaussian random variable independent of
$B^{\textrm{br}}.$ See Pitman~\cite[p. 203]{Pit06} for this
description. We will try to be quite transparent at this point. We
want to use Proposition~\ref{thmLam4} to obtain nice EPPF's
generated by a density related to $M^{\textrm{br}}_{1}.$ The key
to exploiting these results is that
$$
{|B_{1}|}^{2}\overset{d}=2G_{1/2}.
$$
Looking at Proposition~\ref{thmLam4}, we want this to apply for all
$\alpha$, we solve the equation
$$
\frac{\theta}{\alpha}+1=1/2
$$
which leads to
$$
\theta=-\alpha/2,
$$
which is less that $0$ but greater than $-\alpha,$ hence in the
acceptable range of Proposition~\ref{thmLam4}. This leads us to
construct the following mixing density for each $\tau>0,$
\begin{equation}
\label{Kolm} \gamma_{\alpha,\alpha/2,\tau}(t)=
f_{\alpha,-\alpha/2}(t)\mathbb{P}(2{(M^{\textrm{br}}_{1})}^{2}\leq
\tau
t^{\alpha}/G^{\alpha}_{\alpha/2})\Sigma_{\alpha,\alpha/2}(\tau)
\end{equation}
where by~Proposition~\ref{thmLam4} and (\ref{tanform}),
$$
1/\Sigma_{\alpha,\alpha/2}(\tau)=\mathbb{P}(|B_{1}|M^{\textrm{br}}_{1}\leq
\sqrt{\tau}\beta^{-\alpha/2}_{\alpha/2,1-\alpha})=\mathbb{E}[\tanh(\sqrt{\tau}\beta^{-\alpha/2}_{\alpha/2,1-\alpha})].
$$
Now define probabilities
\begin{equation}
\label{Mprob}
p_{n,k}(\tau|\alpha)=\mathbb{P}(2G_{k-1/2}{(M^{\textrm{br}}_{1})}^{2}\leq
\beta^{-\alpha}_{\alpha/2,n-\alpha}\tau).
\end{equation}
This leads to the following result.

\begin{prop}\label{proMar1} Let for $0<\alpha<1,$ and each $\tau>0$,
\PK$(\rho_{\alpha}, \gamma_{\alpha,\alpha/2,\tau})$ denote the
Poisson Kingman distribution with mixing density specified
in~(\ref{Kolm}). In addition, let $p_{n,k}(\tau|\alpha)$ denote
the probabilities defined in~(\ref{Mprob}). Then,
\PK$(\rho_{\alpha}, \gamma_{\alpha,\alpha/2,\tau})$ has the
following properties.
\begin{enumerate}
\item[(i)]For each $n$,
$$V_{n,1}=\frac{\Gamma(1-\alpha/2)}{\Gamma(n-\alpha/2)}
\frac{\mathbb{E}[\tanh(\sqrt{\tau}\beta^{-\alpha/2}_{\alpha/2,n-\alpha})]}{\mathbb{E}[\tanh(\sqrt{\tau}\beta^{-\alpha/2}_{\alpha/2,1-\alpha})]}.
$$
\item[(ii)] For $k=1,2,\ldots,n$,
$$
V_{n,k}=\frac{\alpha^{k-1}\Gamma(1-\alpha/2)\Gamma(k-1/2)}{\Gamma(1/2)\Gamma(n-\alpha/2)}
\frac{p_{n,k}(\tau|\alpha)}{\mathbb{E}[\tanh(\sqrt{\tau}\beta^{-\alpha/2}_{\alpha/2,1-\alpha})]}.
$$
\item[(iii)]For $\tau>0,$ applying~(\ref{forward}) yields the following
recursion,
$$
p_{n+1,k+1}(\tau|\alpha)=\frac{(n-\alpha/2)}{(k-1/2)\alpha}[p_{n,k}(\tau)-p_{n+1,k}(\tau)]+p_{n+1,k}(\tau).
$$
Hence, these probabilities are completely determined by
$$\mathbb{E}[\tanh(\sqrt{\tau}\beta^{-\alpha/2}_{\alpha/2,n-\alpha})]=p_{n,1}(\tau).$$
\end{enumerate}
\end{prop}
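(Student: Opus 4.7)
The plan is to recognise (\ref{Kolm}) as an instance of the Beta--Gamma class of Proposition~\ref{thmLam4} with the parameter choice $\theta=-\alpha/2$, which lies in the admissible range $\theta>-\alpha$. Setting
\[
g_\tau(y)=\mathbb{P}\bigl(2(M^{\textrm{br}}_1)^2\le \tau/y\bigr),\qquad y>0,
\]
and using independence of $M^{\textrm{br}}$ from the gamma variable $G_{\alpha/2}$, one has $\mathbb{E}[g_\tau(G^\alpha_{\alpha/2}/t^\alpha)]=\mathbb{P}(2(M^{\textrm{br}}_1)^2\le \tau t^\alpha/G^\alpha_{\alpha/2})$. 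Matching this with the prescription $\gamma_\alpha(t)=\Sigma_{\alpha,\theta}\mathbb{E}[g(G^\alpha_{\theta+\alpha}/t^\alpha)]f_{\alpha,\theta}(t)$ of Proposition~\ref{thmLam4} shows that (\ref{Kolm}) sits in the Beta--Gamma class with data $(g,\theta)=(g_\tau,-\alpha/2)$.

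For part~(ii), I would substitute $\theta=-\alpha/2$ (so $\theta/\alpha=-1/2$) into the formula of Proposition~\ref{thmLam4}(ii). The gamma ratios collapse to $\Gamma(1-\alpha/2)/\Gamma(1/2)$ and $\Gamma(k-1/2)/\Gamma(n-\alpha/2)$, while $G_{\theta/\alpha+k}$ becomes $G_{k-1/2}$ and $\B^\alpha_{\theta+\alpha,n-\alpha}$ becomes $\B^\alpha_{\alpha/2,n-\alpha}$. A short conditioning argument then yields
\[
\mathbb{E}\bigl[g_\tau\bigl(G_{k-1/2}\B^\alpha_{\alpha/2,n-\alpha}\bigr)\bigr]
=\mathbb{P}\bigl(2G_{k-1/2}(M^{\textrm{br}}_1)^2\le \tau\,\B^{-\alpha}_{\alpha/2,n-\alpha}\bigr)
=p_{n,k}(\tau|\alpha),
\]
which, after rearranging constants, produces the advertised closed form for $V_{n,k}$.

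For part~(i), I would specialise (ii) to $k=1$ and invoke the well-known identity $|B_1|^2 \stackrel{d}= 2G_{1/2}$, with $B_1$ independent of $M^{\textrm{br}}$, to rewrite
\[
p_{n,1}(\tau|\alpha)=\mathbb{P}\bigl(|B_1|M^{\textrm{br}}_1\le\sqrt{\tau}\,\B^{-\alpha/2}_{\alpha/2,n-\alpha}\bigr).
\]
Conditioning on $\B^{-\alpha/2}_{\alpha/2,n-\alpha}$ and invoking Kolmogorov's tanh identity (\ref{tanform}) converts the right-hand side into $\mathbb{E}[\tanh(\sqrt{\tau}\,\B^{-\alpha/2}_{\alpha/2,n-\alpha})]$. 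Applying the same identification inside the defining formula $1/\Sigma_{\alpha,\alpha/2}(\tau)=\mathbb{E}[g_\tau(G_{1/2}\B^\alpha_{\alpha/2,1-\alpha})]$ fixes the normalising constant as $\mathbb{E}[\tanh(\sqrt{\tau}\,\B^{-\alpha/2}_{\alpha/2,1-\alpha})]$, and the ratio form of~(i) drops out.

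Part~(iii) is a direct calculation: insert the formula of~(ii) into the forward recursion~(\ref{forward}), cancel the common factor $\alpha^{k-1}\Gamma(1-\alpha/2)\Gamma(k-1/2)/[\Gamma(1/2)\,\mathbb{E}[\tanh(\sqrt{\tau}\,\B^{-\alpha/2}_{\alpha/2,1-\alpha})]]$ shared by the three terms, and simplify using $\Gamma(n+1-\alpha/2)=(n-\alpha/2)\Gamma(n-\alpha/2)$ together with the identity $(n-\alpha/2)-(k-1/2)\alpha=n-k\alpha$. The base cases $p_{n,1}(\tau)=\mathbb{E}[\tanh(\sqrt{\tau}\,\B^{-\alpha/2}_{\alpha/2,n-\alpha})]$ are exactly those furnished by~(i), so the recursion determines every $p_{n,k}$. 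The main obstacle is purely bookkeeping: tracking the cascade of gamma factors through Proposition~\ref{thmLam4} with the (slightly unusual) negative $\theta$, and keeping the $\alpha$ coefficients straight when converting the $V_{n,k}$ recursion into a recursion in the $p_{n,k}$; the probabilistic core --- the Beta--Gamma identification together with Kolmogorov's tanh formula --- is short and transparent.
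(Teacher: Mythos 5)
Your proposal is correct and follows essentially the same route as the paper's own proof: identify the mixing density~(\ref{Kolm}) as a member of the Beta--Gamma class of Proposition~\ref{thmLam4} with $g(x)=\mathbb{P}(2(M^{\textrm{br}}_1)^2\le\tau/x)$ and $\theta=-\alpha/2$, observe that $\mathbb{E}[g(G_{k-1/2}\B^{\alpha}_{\alpha/2,n-\alpha})]=p_{n,k}(\tau|\alpha)$, and use $|B_1|^2\overset{d}=2G_{1/2}$ together with Kolmogorov's $\tanh$ identity~(\ref{tanform}) to handle the $k=1$ case and the normalizing constant. The bookkeeping you carry out for part~(iii) via the forward recursion~(\ref{forward}) is exactly what the paper leaves implicit, and it checks out.
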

\begin{proof}For clarity, it follows from the form of the density
in~(\ref{Kolm}) that Proposition~\ref{thmLam4} applies with
$$
g(x)=\mathbb{P}(2{(M^{\textrm{br}}_{1})}^{2}\leq \tau/x)
$$
and $\theta=-\alpha/2.$ Furthermore,
$$
\mathbb{E}[g(G_{k-1/2}\B^{\alpha}_{\alpha/2,n-\alpha})]=p_{n,k}(\tau|\alpha)
$$
which specializes in the case of $k=1$ to
$$
\mathbb{P}(|B_{1}|M^{\textrm{br}}_{1}\leq
\sqrt{\tau}\beta^{-\alpha/2}_{\alpha/2,n-\alpha})=\mathbb{E}[\tanh(\sqrt{\tau}\beta^{-\alpha/2}_{\alpha/2,n-\alpha})].
$$
due to~(\ref{tanform}).
\end{proof}

\subsection{General result}Next we first sketch out some
of the notation and definitions given in Pitman and Yor~\cite{PY01}.
We ask the interested to consult that work for more details.  Note
instead of $\gamma$ and $\alpha$, used in that work we use $\kappa$
and $\delta.$ The first change is obviously to avoid confusion with
the mixing density. The second as in the previous result is that
$\delta$ will be associated with a stable subordinator, and other
relevant quantities, not directly related to our use of $\alpha.$ We
now paraphrase Pitman and Yor~\cite[p. 366-367]{PY01} which
describes the general random variables we are interested in.

In this section $B:=(B_{t}, t>0)$ denotes a real or vector valued
$\beta$-self similar strong Markov process, with starting state
$0$ which is a recurrent point for $B$. Let $(e_t, 0 \le t \le
V_e)$ denote a generic excursion path, where $V_e$ is the {\em
lifetime} or {\em length} of $e$. Let $F$ be a non-negative
measurable functional of excursions $e$, and let $\kappa >0$. Call
$F$ a {\em $\kappa$-homogeneous functional of excursions of $B$}
if
\begin{equation} \label{Fscale} F( e_t, 0 \le t \le V_e ) =
V_e^\kappa F ( V_e ^{- \beta } e_{u V_e } , 0 \le u \le 1)
. \end{equation}

In particular, we (Pitman and Yor) have in mind the following
functionals $F$: length, maximum height, maximum absolute height,
area, maximum local time. Now we restate their Theorem 3.

\begin{thm}[Pitman and Yor~\cite{PY01}]\label{thmMar1}
Let $F$ be a $\kappa$-homogeneous functional of excursions
of $B$, let $\fst:= F(\Bex)$ for a standard excursion
$\Bex$, and suppose that $\mathbb{E}[
(\fst)^{\delta/\kappa}]$ $< \infty$, for $0<\delta<1.$ Then
the strictly positive values of $F(e)$ as $e$ ranges over
the countable collection of excursions of $\BB$ can be
arranged as a sequence
$$\fbro \ge \fbrt \ge \cdots >  0.$$
Let $\gamal$ be a random variable, independent of $\BB$,
with the gamma$(\delta,1)$ density \begin{equation}
\label{gla}
\mathbb{P}( \gamal  \in dt)/dt = \Gamma(\delta)^{-1} t^{\delta -1 }
e^{-t }\qquad \mbox{for }t > 0.
\end{equation}
Fix $\la > 0$. Then, the joint distribution of the sequence $(\fbrj,
j = 1,2, \ldots)$ is uniquely determined by the equality in
distribution
\begin{equation} \label{mainxx} \left( \mu ( \ggla^{\kappa} \fbrj ) , j
= 1,2, \ldots\right) \overset{d}= (  \tjs , j = 1,2, \ldots
) \mbox{ where } \tjs : = \sum_{i = 1}^j \epi / \epo
\end{equation} for independent standard exponential
variables $\epo, \epone, \eptwo, \ldots$, and $\muu$ is the
function determined as follows by $\delta, \lambda, \kappa$
and the distribution of $\fst$:
\begin{equation} \label{defmu} \muu ( x) := \int_0^ \infty {\delta
\lambda^{-\delta} \over \Gamma(1 - \delta)} t^{-\delta - 1}
e^{-\lambda t } \,\mathbb{P}(\fst > x t^{- \kappa }) \, dt
.
\end{equation}
\end{thm}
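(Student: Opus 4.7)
The plan is to use It\^o's excursion theory together with the self-similarity of $B$ and the $\kappa$-homogeneity of $F$ to reduce the claim to a statement about ranked atoms of a Poisson point process on $(0,\infty)$, then to identify $\muu$ as precisely the map that turns this process into a standard one under exponential tilting, and finally to pass back from the tilted object to the bridge.

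First, by It\^o's excursion theory, the excursions of $B$ away from $0$, indexed by the local time $\ell$, form a Poisson point process on $[0,\infty)\times\mathcal{E}$ with intensity $d\ell\otimes n(de)$ for a $\sigma$-finite excursion measure $n$. The $\beta$-self-similarity of $B$ forces $n$ to scale so that $n(V_e\in dv)=c\,v^{-\delta-1}\,dv$ for some $\delta\in(0,1)$, and conditionally on $V_e=v$ the normalized excursion $(v^{-\beta}e_{uv},\,0\le u\le 1)$ is distributed as $\Bex$; in particular the inverse local time is a stable subordinator of index $\delta$, which is why this same index appears both in the definition of $\muu$ and in $\gamal$. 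Using~\eqref{Fscale} one pushes $n$ forward through $F$ and obtains $n(F(e)>x)=c\int_0^\infty v^{-\delta-1}\mathbb{P}(\fst>xv^{-\kappa})\,dv$, which is finite by the hypothesis $\mathbb{E}[\fst^{\delta/\kappa}]<\infty$.

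Next, exponentially tilt the lifetime intensity by $e^{-\la t}$. Under this tilt the intensity of the $F$-image Poisson point process becomes exactly $-d\muu(x)$ on $(0,\infty)$ by~\eqref{defmu}. Since $\muu$ is strictly decreasing and continuous on its support, pushing each atom through $\muu$ transforms the tilted Poisson point process into a unit-rate Poisson point process on $(0,\infty)$, whose ranked atoms are $\sum_{i=1}^j\epi$, $j\ge 1$. This yields a provisional identity between the $\muu$-images of the ranked $F$-values under the tilted intensity and these partial sums.

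Finally, to pass from the tilted object to the bridge one writes $\BB$ as the excursion process conditioned on the value of the underlying stable subordinator and disintegrates this conditioning by expressing the conditional value through an independent standard exponential $\epo$ together with the multiplicative factor $\ggla^\kappa$ appearing on the left of~\eqref{mainxx}; this step relies on the stable-gamma identities of the type~\eqref{key} that are exploited throughout the paper. The substitution rescales the $\fbrj$ on the left by $\ggla^\kappa$ and simultaneously replaces each partial sum $\sum_{i=1}^j\epi$ by the ratio $\tjs=\sum_{i=1}^j\epi/\epo$, yielding exactly~\eqref{mainxx}; uniqueness of the joint law of $(\fbrj)$ follows because $\muu$ is invertible on its support and $\ggla^\kappa$ is almost surely strictly positive. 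The main obstacle is precisely this last step: arranging the disintegration so that the exponential tilt contributes exactly $\ggla^\kappa$ on the left and exactly the denominator $\epo$ on the right, while the preceding steps --- the Poisson structure, the pushforward by $F$, and the identification of $\muu$ as the tail of the tilted intensity --- are essentially forced by excursion theory and the $\kappa$-homogeneity of $F$.
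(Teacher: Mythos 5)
First, a point of comparison: the paper does not prove this statement at all. It is Theorem~3 of Pitman and Yor~\cite{PY01}, quoted essentially verbatim (with $\gamma,\alpha$ renamed $\kappa,\delta$), and the surrounding text says explicitly ``Now we restate their Theorem 3.'' So there is no in-paper proof to measure your argument against; what you have written is a from-scratch reconstruction of the Pitman--Yor argument, and it has to be judged on its own.

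Judged that way, the first three steps are sound and do follow the standard excursion-theoretic skeleton: the It\^o point process, the scaling form $n(V_e\in dv)=c\,v^{-\delta-1}dv$ with the normalized excursion distributed as $\Bex$, the pushforward $n(F>x)=c\int_0^\infty v^{-\delta-1}\mathbb{P}(\fst>xv^{-\kappa})dv$, and the fact that mapping the ranked atoms of a Poisson process through the tail of its intensity produces the partial sums $\sum_{i\le j}\epi$. But the final step, which you yourself identify as ``the main obstacle,'' is precisely the entire content of the theorem, and you have not supplied it. Three concrete things are missing. (a) Your ``exponential tilt'' of the lifetime intensity does not by itself correspond to any process whose excursions you can then condition into a bridge; the honest route is to run $B$ up to the inverse local time $\tau_{\epo}$ at an independent standard exponential local time $\epo$ (this is where the denominator in $\tjs=\sum_{i\le j}\epi/\epo$ actually comes from, since the excursions then form a Poisson process of intensity $\epo\, n$), and then to disintegrate on the total duration $\tau_{\epo}$, using the Markov property to identify the conditioned path as a bridge of length $t$ and $\kappa$-homogeneity to rescale its $F$-values by $t^\kappa$. (b) After that disintegration the scaling factor multiplying the bridge quantities is the \emph{dependent} random variable $\tau_{\epo}^{\kappa}$, not the \emph{independent} multiplier $\ggla^{\kappa}$ appearing on the left of~(\ref{mainxx}); converting one into the other requires the joint law of $(\epo,\tau_{\epo})$ and the stable--gamma algebra (the Laplace exponent of the inverse local time), together with a conditioning on the bridge's own local time, none of which you carry out. (c) The uniqueness claim needs more than invertibility of $\muu$: one must deconvolve the independent factor $\ggla^{\kappa}$ from the joint law, e.g.\ by a Mellin-transform argument, and a one-line appeal to positivity of $\ggla^{\kappa}$ does not do this. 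In short, the skeleton is right but the load-bearing step is only gestured at, so as a proof the proposal is incomplete.
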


As noted by Pitman and Yor~\cite{PY01}, the above result gives a
characterization of the random variables $F^{\textrm{br}}_{j}$ but
it does not provide an explicit expression for their distribution,
as for instance in the special case of $M^{\textrm{br}}_{1}.$
Nonetheless, letting $\mathbb{P}_{\delta}$ denote the law of these
$F^{\textrm{br}}_{j}$, depending on $\delta$ as described above, we
can construct a rich class of EPPF's from these random variables.
Specifically, Theorem 3 and Corollary 7 in Pitman and
Yor~\cite{PY01} along with Proposition~\ref{thmLam4} suggests the
following construction of mixing distributions.

First we solve the equation
$$
\frac{\theta}{\alpha}+1=\delta
$$
which leads to
$$
-\alpha<\theta=(\delta-1)\alpha<0.
$$
Now construct mixing densities as follows. For each
$\alpha,j$ and $\delta$,
\begin{equation}
\label{mixlast}
\gamma_{\alpha,j,\delta}(t)=\left(\Sigma_{j}\right)\times
\mathbb{P}_{\delta}\left(\lambda^{-\kappa}F^{\textrm{br}}_{j}\ge
w{(t/G_{\delta\alpha})}^{\alpha
\kappa}\right)f_{\alpha,(\delta-1)\alpha}(t)
\end{equation}
where
$$
1/\Sigma_{j}=\mathbb{E}\left\{{\left[\frac{\muu (
w\beta^{-\alpha\kappa}_{\alpha\delta,1-\alpha})}{1+\muu (
w\beta^{-\alpha\kappa}_{\alpha\delta,1-\alpha})}\right]}^{j}\right\}=
\mathbb{P}_{\delta}\left(\lambda^{-\kappa}G^{\kappa}_{\delta}F^{\textrm{br}}_{j}\ge
w\beta^{-\alpha\kappa}_{\alpha\delta,1-\alpha}\right)
$$
and $\mu$ defined in~(\ref{defmu}) is a quite general function,
some examples are given in Pitman and Yor~\cite{PY01}. Now define
\begin{equation}
\label{lastprob} p_{n,k}(w|\delta,\alpha,j)=
\mathbb{P}_{\delta}\left(\lambda^{-\kappa}G^{\kappa}_{k+\delta-1}F^{\textrm{br}}_{j}\ge
w\beta^{-\alpha\kappa}_{\alpha\delta,n-\alpha}\right).
\end{equation}
The next result follows from the above discussion combined with
Proposition~\ref{thmLam4} and a bit of algebra.

\begin{thm}\label{thmMar2} Let, for $0<\alpha<1,$ $0<\delta<1,$ $j=1,2,\ldots,$
\PK$(\rho_{\alpha},\gamma_{\alpha,j,\delta})$ denote the Poisson
Kingman distribution with mixing density specified
in~(\ref{mixlast}). In addition, let $p_{n,k}(w|\delta,\alpha,j)$
denote the probabilities defined in~(\ref{lastprob}). Then,
\PK$(\rho_{\alpha},\gamma_{\alpha,j,\delta})$ has the following
properties.
\begin{enumerate}
\item[(i)]For each $n$,
$$V_{n,1}=
\frac{\Gamma(1-(1-\delta)\alpha)}{\Gamma(n-(1-\delta)\alpha)}
\frac{\mathbb{E}\left\{{\left[\dfrac{\muu (
w\B^{-\alpha\kappa}_{\alpha\delta,n-\alpha})}{1+\muu (
w\B^{-\alpha\kappa}_{\alpha\delta,n-\alpha})}\right]}^{j}\right\}}{
\mathbb{E}\left\{{\left[\dfrac{\muu (
w\B^{-\alpha\kappa}_{\alpha\delta,1-\alpha})}{1+\muu (
w\B^{-\alpha\kappa}_{\alpha\delta,1-\alpha})}\right]}^{j}\right\}}.
$$
\item[(ii)] For $k=1,2,\ldots,n$,
$$
V_{n,k}=\frac{\alpha^{k-1}\Gamma(1-(1-\delta)\alpha)\Gamma(k-(1-\delta))}{\Gamma(\delta)\Gamma(n-(1-\delta)\alpha)}
\frac{p_{n,k}(w|\delta,\alpha,j)}{\mathbb{E}\left\{{\left[\dfrac{\muu
( w\B^{-\alpha\kappa}_{\alpha\delta,1-\alpha})}{1+\muu (
w\B^{-\alpha\kappa}_{\alpha\delta,1-\alpha})}\right]}^{j}\right\}}.
$$
\item[(iii)]For $\lambda>0,$ applying~(\ref{forward}) yields the following
recursion,
\begin{eqnarray*}
p_{n+1,k+1}(w|\alpha,\delta,j)&=&\frac{(n-(1-\delta)\alpha)}{(k+\delta-1)\alpha}
\left[p_{n,k}(w|\alpha,\delta,j)-p_{n+1,k}(w|\alpha,\delta,j)\right]\\
&&\qquad +p_{n+1,k}(w|\alpha,\delta,j).
\end{eqnarray*}
Hence, these probabilities are completely determined by
$$
\mathbb{E}\left\{{\left[\frac{\muu (
w\B^{-\alpha\kappa}_{\alpha\delta,n-\alpha})}{1+\muu (
w\B^{-\alpha\kappa}_{\alpha\delta,n-\alpha})}\right]}^{j}\right\}
=p_{n,1}(w|\alpha,\delta,j).
$$
\end{enumerate}
\end{thm}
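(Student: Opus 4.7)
The mixing density $\gamma_{\alpha,j,\delta}$ in~(\ref{mixlast}) is engineered so that Proposition~\ref{thmLam4} applies with the choice $\theta=(\delta-1)\alpha$, which lies in $(-\alpha,0)$ and produces $\theta+\alpha=\delta\alpha$, $\theta/\alpha+1=\delta$, $\theta+1=1-(1-\delta)\alpha$, and $n+\theta=n-(1-\delta)\alpha$. My first step is to verify this identification: taking
$$
g(x)=\mathbb{P}_{\delta}(\lambda^{-\kappa}F^{\br}_{j}\ge w/x^{\kappa}),
$$
one checks that $g(G^{\alpha}_{\delta\alpha}/t^{\alpha})=\mathbb{P}_{\delta}(\lambda^{-\kappa}F^{\br}_{j}\ge w(t/G_{\delta\alpha})^{\alpha\kappa})$, exactly matching the form required in Proposition~\ref{thmLam4} since $G_{\theta+\alpha}=G_{\delta\alpha}$.

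Applying Proposition~\ref{thmLam4}(ii) then gives
$$
V_{n,k}=\frac{\alpha^{k-1}\Gamma(1-(1-\delta)\alpha)\Gamma(k+\delta-1)}{\Gamma(\delta)\Gamma(n-(1-\delta)\alpha)}\Sigma_{j}\,\mathbb{E}[g(G_{k+\delta-1}\B^{\alpha}_{\delta\alpha,n-\alpha})],
$$
and the expectation unfolds, by Fubini, as
$$
\mathbb{E}[g(G_{k+\delta-1}\B^{\alpha}_{\delta\alpha,n-\alpha})]=\mathbb{P}_{\delta}(\lambda^{-\kappa}G^{\kappa}_{k+\delta-1}F^{\br}_{j}\ge w\B^{-\alpha\kappa}_{\delta\alpha,n-\alpha})=p_{n,k}(w|\delta,\alpha,j),
$$
yielding statement~(ii); statement~(i) is the specialization $k=1$ before the Pitman--Yor identity is applied.

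The key analytic input is Corollary~7 of Pitman and Yor~\cite{PY01}, which I would use to show that for any positive random variable $Y$ independent of $F^{\br}_{j}$,
$$
\mathbb{P}_{\delta}(\lambda^{-\kappa}G_{\delta}^{\kappa}F^{\br}_{j}\ge Y)=\mathbb{E}\!\left[\left(\frac{\muu(Y)}{1+\muu(Y)}\right)^{\!j}\right].
$$
This falls straight out of Theorem~\ref{thmMar1}: since $\muu$ is strictly decreasing in its argument (immediate from~(\ref{defmu})), the distributional identity $\muu(\ggla^{\kappa}F^{\br}_{j})\overset{d}{=}T^{*}_{j}$ gives $\mathbb{P}(\ggla^{\kappa}F^{\br}_{j}\ge x)=\mathbb{P}(T^{*}_{j}\le\muu(x))$, and writing $T^{*}_{j}\overset{d}{=}G_{j}/\varepsilon_{0}$ followed by the Laplace transform of $G_{j}$ gives $\mathbb{P}(T^{*}_{j}\le y)=\mathbb{E}[1-e^{-G_{j}/y}]=(y/(1+y))^{j}$. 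Specializing $Y$ to $w\B^{-\alpha\kappa}_{\delta\alpha,n-\alpha}$ produces the $\muu$--expression for $p_{n,1}$ in statement~(i), and specializing to $w\B^{-\alpha\kappa}_{\delta\alpha,1-\alpha}$ produces the second equality defining $1/\Sigma_{j}$ just below~(\ref{mixlast}). The main obstacle I anticipate is bookkeeping of the $\lambda^{-\kappa}$ scaling, since $\muu$ itself depends on $\lambda$ through~(\ref{defmu}) and Theorem~\ref{thmMar1} already absorbs a $\lambda^{-\kappa}$ into $\ggla=\lambda^{-\kappa}\Gamma_{\delta}$; one has to check that the $\lambda^{-\kappa}$ explicit in~(\ref{mixlast}) lines up with the Pitman--Yor convention.

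Finally, statement~(iii) is mechanical once (i) and (ii) are established. Writing $V_{n,k}=C_{n,k}\,p_{n,k}(w|\delta,\alpha,j)$ with
$$C_{n,k}=\alpha^{k-1}\Gamma(1-(1-\delta)\alpha)\Gamma(k+\delta-1)\Sigma_{j}/[\Gamma(\delta)\Gamma(n-(1-\delta)\alpha)],$$
I substitute into the forward recursion~(\ref{forward}) and compute the two ratios $C_{n,k}/C_{n+1,k+1}=(n-(1-\delta)\alpha)/[\alpha(k+\delta-1)]$ and $C_{n+1,k}/C_{n+1,k+1}=1/[\alpha(k+\delta-1)]$; elementary rearrangement then produces the claimed recursion for $p_{n+1,k+1}$, while the boundary condition $V_{1,1}=1$ forces $\Sigma_{j}=1/p_{1,1}(w|\delta,\alpha,j)$, consistent with the normalization already chosen.
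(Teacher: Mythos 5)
Your proposal is correct and follows essentially the same route as the paper: identify $g(x)=\mathbb{P}_{\delta}(\lambda^{-\kappa}F^{\br}_{j}\ge w x^{-\kappa})$ and $\theta=(\delta-1)\alpha$, invoke Proposition~\ref{thmLam4} to get $p_{n,k}$, and use Pitman--Yor's Corollary~7/Theorem~3 for the $k=1$ case (you merely spell out the derivation of $\mathbb{P}(T^{*}_{j}\le y)=(y/(1+y))^{j}$ and the ratio bookkeeping for the recursion, which the paper leaves implicit). One small slip: the intermediate expression should read $\mathbb{E}[e^{-G_{j}/y}]$ rather than $\mathbb{E}[1-e^{-G_{j}/y}]$, though your final value $(y/(1+y))^{j}$ is the correct one.
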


\begin{proof}The result again follows from
Proposition~\ref{thmLam4}. Now from~(\ref{mixlast}), fixing
$j, \alpha,\delta$ and $w,$
$$
g(x)=\mathbb{P}_{\delta}(\lambda^{-\kappa}F^{\textrm{br}}_{j}\ge
w{x}^{-\kappa}).
$$
Furthermore, $\theta=(\delta-1)\alpha.$ Hence from
Proposition~\ref{thmLam4},
$$
\mathbb{E}[g(G_{k+\delta-1}\B^{\alpha}_{\delta\alpha,n-\alpha})]=p_{n,k}(w|\delta,\alpha,j)
$$
and when $k=1,$ one uses Corollary 7 or Theorem 3 in Pitman
and Yor~\cite{PY01} to obtain
$$
\mathbb{E}\left[g(G_{\delta}\B^{\alpha}_{\delta\alpha,n-\alpha})\right]=
\mathbb{E}\left\{{\left[\frac{\muu (
w\B^{-\alpha\kappa}_{\alpha\delta,n-\alpha})}{1+\muu (
w\B^{-\alpha\kappa}_{\alpha\delta,n-\alpha})}\right]}^{j}\right\}.
$$
\end{proof}

\subsubsection{Bessel Bridges}
As an interesting special case, we now follow the example given in
Pitman and Yor~\cite[p. 375]{PY01}. Let
$$
h_{-\delta}(x)=\frac{I_{-\delta}(x)}{I_{\delta}(x)}
$$
denote the ratio of two modified Bessel functions, which follows
as a special case of~(\ref{defmu}). Then setting $\kappa=1/2,$
$\lambda=1/2,$
$F^{\textrm{br}}_{j}\overset{d}=M^{\textrm{br}}_{j}$, where under
$\mathbb{P}_{\delta}$ are the ranked heights of excursion of a
standard Bessel bridge of dimension $2-2\delta.$ Hence, in this
case,
$$
g(x)=\mathbb{P}_{\delta}(\sqrt{2}M^{\textrm{br}}_{j}\ge
w{x}^{-1/2}),
$$
and we will use, from Pitman and Yor~\cite[p. 375]{PY01},
$$
\mathbb{P}_{\delta}(\sqrt{2G_{\delta}}M^{\textrm{br}}_{j}\ge
w)={(1-h_{-\delta}(w))}^{j}.
$$
Thus Theorem~\ref{thmMar2} specializes as follows.

\begin{prop}Consider the setting in Theorem~\ref{thmMar2} with
$\lambda=1/2$ and $\kappa=1/2$, such that for each $j,$
$F^{\textrm{br}}_{j}\overset{d}=M^{\textrm{br}}_{j}$ are the ranked
heights of excursion of a standard Bessel bridge of dimension
$2-2\delta.$ Then,
$$
V_{n,1}=\frac{\Gamma(1-(1-\delta)\alpha)}{\Gamma(n-(1-\delta)\alpha)}
\frac{\mathbb{E}\left\{{\left[1-h_{-\delta}(w\beta^{-\alpha/2}_{\alpha\delta,n-\alpha})\right]}^{j}\right\}}{
\mathbb{E}\left\{{\left[1-h_{-\delta}(w\beta^{-\alpha/2}_{\alpha\delta,1-\alpha})\right]}^{j}\right\}}
$$
and the $V_{n,k}$ are obtained from $V_{n,1}$ via the recursion
formula based on obvious adjustments in Theorem~\ref{thmMar2}. This
result generalizes Proposition~\ref{proMar1}.
\end{prop}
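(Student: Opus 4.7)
The plan is to derive this as a direct specialization of Theorem~\ref{thmMar2}. I would first instantiate the parameters: set $\kappa = 1/2$ and $\lambda = 1/2$ in the general framework of Pitman and Yor, and take $F^{\mathrm{br}}_j$ to be the ranked heights $M^{\mathrm{br}}_j$ of excursions of a standard Bessel bridge of dimension $2-2\delta$. With these choices, the function $\mu$ appearing in (\ref{defmu}) is known from Pitman and Yor~\cite[p.~375]{PY01} to be the modified Bessel ratio $h_{-\delta}$, so all the abstract $\mu$-quantities in Theorem~\ref{thmMar2} acquire an explicit form.

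Next, I would apply Theorem~\ref{thmMar2}(i). The key external input is the identity
$$\mathbb{P}_{\delta}\bigl(\sqrt{2 G_{\delta}}\,M^{\mathrm{br}}_{j} \ge w\bigr) = \bigl(1 - h_{-\delta}(w)\bigr)^{j},$$
also recorded in Pitman and Yor~\cite[p.~375]{PY01}. Combined with the fact that $\theta = (\delta-1)\alpha$ forces the gamma index $\theta/\alpha + 1$ to equal $\delta$, this identity allows me to evaluate the numerator and denominator Bessel-bridge probabilities in Theorem~\ref{thmMar2}(i) by conditioning on the independent gamma factor and then rescaling by the beta variable. Concretely, with $g(x) = \mathbb{P}_{\delta}(\sqrt{2}\,M^{\mathrm{br}}_j \ge w x^{-1/2})$, the relevant expectation becomes
$$\mathbb{E}\!\left[g\!\left(G_{\delta}\,\beta^{\alpha}_{\alpha\delta,\,n-\alpha}\right)\right] = \mathbb{E}\!\left\{\bigl[1 - h_{-\delta}\bigl(w\,\beta^{-\alpha/2}_{\alpha\delta,\,n-\alpha}\bigr)\bigr]^{j}\right\},$$
and substituting this into the expression for $V_{n,1}$ given by Theorem~\ref{thmMar2}(i) yields the stated formula after the gamma ratio simplifies.

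For the $V_{n,k}$ with $k \ge 2$, I would simply invoke the forward recursion (\ref{forward}) already established in Section~\ref{sec:recursion}, which by Theorem~\ref{thmMar2}(iii) reduces the entire family to the single sequence of base cases $V_{n,1}$ computed above. Finally, to confirm that the Kolmogorov/hyperbolic tangent case of Proposition~\ref{proMar1} is recovered, I would specialize to $\delta = 1/2$ and note that $1 - h_{-1/2}(w) = \tanh(w)$ (using the standard identities $I_{-1/2}(w) = \sqrt{2/\pi w}\cosh w$ and $I_{1/2}(w) = \sqrt{2/\pi w}\sinh w$), which matches Proposition~\ref{proMar1} term for term after accounting for the shift $\theta = -\alpha/2$.

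The only non-routine step is the identification of $\mu$ with $h_{-\delta}$ in the Bessel-bridge setting, but this is handled entirely by citation; everything else is manipulation of beta-gamma algebra and direct substitution into Theorem~\ref{thmMar2}, so I do not anticipate a significant obstacle.
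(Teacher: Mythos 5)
Your main derivation coincides with the paper's: the paper likewise obtains the displayed $V_{n,1}$ by setting $\kappa=\lambda=1/2$ and $\theta=(\delta-1)\alpha$ in Theorem~\ref{thmMar2} (equivalently Proposition~\ref{thmLam4}), taking $g(x)=\mathbb{P}_{\delta}(\sqrt{2}M^{\textrm{br}}_{j}\ge w x^{-1/2})$, and invoking the Pitman--Yor identity $\mathbb{P}_{\delta}(\sqrt{2G_{\delta}}M^{\textrm{br}}_{j}\ge w)=(1-h_{-\delta}(w))^{j}$ conditionally on the beta variable; the passage to general $k$ via the forward recursion~(\ref{forward}) is also exactly the paper's route. (One small imprecision: $\mu$ is not ``identified with $h_{-\delta}$''; the relation implicit in the paper is $h_{-\delta}=1/(1+\mu)$, so that $\mu/(1+\mu)=1-h_{-\delta}$. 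Since you only ever use the probability identity, this is harmless.)

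The final verification step of your proposal, however, contains an error. With $I_{-1/2}(w)=\sqrt{2/(\pi w)}\cosh w$ and $I_{1/2}(w)=\sqrt{2/(\pi w)}\sinh w$ one gets $I_{1/2}(w)/I_{-1/2}(w)=\tanh w$ and $I_{-1/2}(w)/I_{1/2}(w)=\coth w$, so $1-h_{-1/2}(w)$ is either $1-\tanh w$ (under the reading of $h_{-\delta}$ that makes $(1-h_{-\delta}(w))^{j}$ a genuine probability) or $1-\coth w$ (under the formula as literally printed in the paper), but in no case $\tanh w$. Consequently the claimed ``term for term'' match with Proposition~\ref{proMar1} does not go through as you describe: at $\delta=1/2$, $j=1$ the present proposition yields $\mathbb{E}[1-\tanh(w\beta^{-\alpha/2}_{\alpha/2,n-\alpha})]$ where Proposition~\ref{proMar1} has $\mathbb{E}[\tanh(\sqrt{\tau}\beta^{-\alpha/2}_{\alpha/2,n-\alpha})]$. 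The discrepancy traces back to the fact that the mixing density~(\ref{Kolm}) is built from the distribution function $\mathbb{P}(2(M^{\textrm{br}}_{1})^{2}\le\cdot)$ while~(\ref{mixlast}) is built from the survival function $\mathbb{P}_{\delta}(\lambda^{-\kappa}F^{\textrm{br}}_{j}\ge\cdot)$; these produce two different (both legitimate) EPPF's, and any honest justification of the word ``generalizes'' must reconcile that reversal of inequality rather than assert the false identity $1-h_{-1/2}=\tanh$. Since the paper itself supplies no such verification, this does not undermine your proof of the displayed formula for $V_{n,1}$ and of the recursion for $V_{n,k}$, but the last paragraph of your argument should be corrected or removed.
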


\section{Appendix: Fox $H$- and Meijer $G$-functions}
\begin{defin}[$H$-Function]\label{AppHdefin}
For integers $m,n,p,q$ such that $0\leq m \leq q,0\leq n\leq p$,
for $a_i,b_j\in \mathbb{C}$ with $\mathbb{C}$, the set of complex
numbers, and for $\alpha_i,\beta_j \in \mathbb{R}_+=(0,\infty)$,
$i=1,2,\ldots,p;j=1,2,\ldots,q$, the $H$-function~\cite{Fox61} is
defined via a Mellin-Barnes type integral~(see~\cite{Pincherle88,
Barnes07, Mellin10, Mainardi03}) on the complex plane in the form
\begin{eqnarray}
H_{p,q}^{m,n}(z) &\equiv& H_{p,q}^{m,n} \left[ z \left|
\begin{matrix} (a_i,\alpha_i)_{1}^{p}\\ \\ (b_j,\beta_j)_{1}^{q}
\end{matrix}\right.\right]\equiv H_{p,q}^{m,n}\left[ z \left|
\begin{matrix} (a_1,\alpha_1),\ldots,(a_p,\alpha_p) \\\\
(b_1,\beta_1),\ldots,(b_q,\beta_q)
\end{matrix}\right.\right]\nonumber\\
&=&\frac{1}{2\pi i} \int_{\mathfrak{L}}
\frac{\displaystyle\prod_{j=1}^m \Gamma(b_j+\beta_js)
\prod_{i=1}^n \Gamma(1-a_i-\alpha_is)}
{\displaystyle\prod_{i=n+1}^p \Gamma(a_i+\alpha_is)
\prod_{j=m+1}^q \Gamma(1-b_j-\beta_js)} z^{-s} ds.\label{AppH}
\end{eqnarray}
Here
$$
z^{-s} =\exp\left[-s(\log|z|+i \arg z)\right],\qquad z\neq
0,\qquad i=\sqrt{-1},
$$
where $\log|z|$ represents the natural logarithm of $|z|$ and
$\arg z$ is not necessarily the principal value. An empty product
in~(\ref{AppH}), if it occurs, is taken to one, and the poles
\begin{equation}\label{poleb}
b_{jl} = \frac{-b_j-l}{\beta_j},\quad j=1,\ldots,m;
l=0,1,2,\ldots,
\end{equation}
of the gamma functions $\Gamma(b_j+\beta_js)$ and the poles
\begin{equation}\label{polea}
a_{ik} = \frac{-a_i-k}{\alpha_i},\quad i=1,\ldots,n;
k=0,1,2,\ldots,
\end{equation}
of the gamma functions $\Gamma(a_i+\alpha_is)$ do not coincide:
$$
\alpha_i(b_j+l) \neq \beta_j(a_i-k-1),\quad
i=1,\ldots,n;j=1,\ldots,m;k,l=0,1,2,\ldots.
$$
$\mathfrak{L}$ in~(\ref{AppH}) is the infinite contour which
separates all the poles $b_{jl}$ in~(\ref{poleb}) to the left and
all the poles $a_{ik}$ in~(\ref{polea}) to the right of
$\mathfrak{L}$, and has one of the following forms:
\begin{enumerate}
    \item[(i)] $\mathfrak{L}=\mathfrak{L}_{-\infty}$ is a
    left loop situated in a horizontal strip starting at
    the point $-\infty+i\varphi_1$ and terminating at the
    point $-\infty+i\varphi_2$ with $-\infty <
    \varphi_1<\varphi_2<+\infty$;
    \item[(ii)] $\mathfrak{L}=\mathfrak{L}_{+\infty}$ is a
    right loop situated in a horizontal strip starting at
    the point $+\infty+i\varphi_1$ and terminating at the
    point $+\infty+i\varphi_2$ with $-\infty <
    \varphi_1<\varphi_2<+\infty$;
    \item[(iii)] $\mathfrak{L}=\mathfrak{L}_{i\gamma\infty}$ is a
    contour starting at
    the point $\gamma-i\infty$ and terminating at the
    point $\gamma+i\infty$, where
    $\gamma\in\mathbb{R}=(-\infty,\infty)$.
\end{enumerate}
\end{defin}
Refer to Kilbas and Saigo~\cite{Kilbas04} for more discussion
about $H$-functions, and in particular, Theorem~1.1 for the
situation in which $H_{p,q}^{m,n}(z)$ defined by~(\ref{AppH})
makes sense.
\begin{defin}[Meijer $G$-Function]\label{AppGdefin}
In general the Meijer $G$-function~\cite{Meijer36, Meijer41,
Meijer46} is defined by the following Mellin-Barnes type integral
on the complex plane:
\begin{eqnarray}
G_{p,q}^{m,n}(z) &\equiv& G_{p,q}^{m,n} \left( z\left|
\begin{matrix} (a_i)_{1}^{p} \\\\ (b_j)_{1}^{q}
\end{matrix}\right.\right) \equiv  G_{p,q}^{m,n} \left( z\left|
\begin{matrix} a_1, \dots, a_p \\\\ b_1, \dots, b_q
\end{matrix}\right.\right) \nonumber\\
&=& \displaystyle\frac{1}{2 \pi i} \displaystyle\int_{\mathfrak L}
\frac{\displaystyle\prod_{j=1}^m \Gamma(b_j + s)
\prod_{j=1}^{n}\Gamma(1 - a_j
-s)}{\displaystyle\prod_{j=n+1}^{p}\Gamma(a_j +
s)\prod_{j=m+1}^{q} \Gamma(1 - b_j - s)} z^{-s} ds,\label{AppG}
\end{eqnarray}
where the contour of integration $\mathfrak L$ is set up to lie
between the poles of $\Gamma(a_i+s)$ and the poles of
$\Gamma(b_j+s)$. The $G$-function is defined under the following
hypothesis
\begin{itemize}
\item $0 \leq m \leq q, 0 \leq n \leq p$ and $p \leq q-1$;
\item $z \neq 0$;
\item no couple of $b_j$, $j = 1,2,\dots,m$ differs by an integer or a
zero;
\item the parameters $a_i \in \mathbb{C}$ and $b_j\in \mathbb{C}$ are so that no
pole of $\Gamma (b_j + s), j = 1,2,\dots,m$ coincide with any pole
of $\Gamma (a_i+s), i = 1,2,\dots,n$;
\item $a_i - b_j
\neq 1,2,3,\dots$ for $i = 1,2,\dots,n$ and $j = 1,2,\dots,m$; and
\item if $p=q$, then the definition makes sense only for
$|z|<1$.
\end{itemize}
\end{defin}
Refer to~\cite[Sec. 5]{Erdelyi53a} for a more thorough discussion
of the $G$-function. Both $H$-functions and $G$-functions are very
general functions whose special cases cover most of the
mathematical functions such as the trigonometric functions, Bessel
functions and generalized hypergeometric functions. Nonetheless,
$G$-functions, but not $H$-functions, are implementable in
\textit{Mathematica} as
\verb"MeijerG[{a1,...,an},{a(n+1),...,ap},{b1,...,bm},"\break\verb"{b(m+1),...,bq},z]".
Comparison between definitions~(\ref{AppHdefin})
and~(\ref{AppGdefin}) reveals that any $G$-function is an
$H$-function, but not vice versa; when $\alpha_i=\beta_j=1$ for
$i=1,\ldots,p$ and $j=1,\ldots,q$,
\begin{equation}\label{AppHG}
  H_{p,q}^{m,n} \left[ z \left|
\begin{matrix} (a_i,1)_{1}^{p} \\\\
(b_j,1)_{1}^{q}
\end{matrix}\right.\right] \equiv G_{p,q}^{m,n} \left( z\left|
\begin{matrix} (a_i)_{1}^{p} \\\\ (b_j)_{1}^{q}
\end{matrix} \right. \right).
\end{equation}
This implies that those $H$-functions which reduce to
$G$-functions can be evaluated in \textit{Mathematica}. Here are
some results, available from Kilbas and Saigo~\cite[Section
2]{Kilbas04}, that follow directly from the definition of the
$H$-function in~(\ref{AppH}).
\begin{property}{\rm The $H$-function is
symmetric in the set of pairs\break
$(a_1,\alpha_1),\ldots,(a_n,\alpha_n)$; in
$(a_{n+1},\alpha_{n+1}),\ldots,(a_p,\alpha_p)$; in
$(b_1,\beta_1),\ldots,(b_m,\beta_m)$ and in
$(b_{m+1},\beta_{m+1}),\ldots,(b_q,\beta_q)$. }
\end{property}
\begin{property}{\rm
If one of the pairs $(a_i,\alpha_i) , i = 1,2,\dots,n$, equals one
of the pairs $(b_j,\beta_j) , j = m+1, \dots, q$, and $n\geq 1,q >
m$, then, for instance,
\begin{equation}\label{AppHlower1}
  H_{p,q}^{m,n} \left[ z \left|
\begin{matrix} (a_i,\alpha_i)_{1}^{p} \\\\
(b_j,\beta_j)_{1}^{q-1},(a_1,\alpha_1)
\end{matrix}\right.\right] =
H_{p-1,q-1}^{m,n-1} \left[ z \left|
\begin{matrix} (a_i,\alpha_i)_{2}^{p} \\\\
(b_j,\beta_j)_{1}^{q-1}
\end{matrix}\right.\right]
\end{equation}
or, if one of the pairs $(a_i,\alpha_i) ,i = n+1, \dots, p$,
equals one of the pairs $(b_j,\beta_j), j = 1,2,\dots,m$ and
$m\geq 1,p>n$, then, for instance,
\begin{equation}\label{AppHlower2}
  H_{p,q}^{m,n} \left[ z \left|
\begin{matrix} (a_i,\alpha_i)_{1}^{p-1},(b_1,\beta_1) \\\\
(b_j,\beta_j)_{1}^{q}
\end{matrix}\right.\right] =
H_{p-1,q-1}^{m-1,n} \left[ z \left|
\begin{matrix} (a_i,\alpha_i)_{1}^{p-1} \\\\
(b_j,\beta_j)_{2}^{q}
\end{matrix}\right.\right].
\end{equation}
}\end{property}
\begin{property}{\rm There holds the relation
\begin{equation}\label{AppHanalytic}
H_{p,q}^{m,n} \left[z\left|
\begin{matrix} (a_i,\alpha_i)_{1}^{p}
\\\\ (b_j,\beta_j)_{1}^{q} \end{matrix}\right.\right] =
H_{q,p}^{n,m} \left[ \frac{1}{z}\left| \begin{matrix}
(1-b_i,\beta_i)_{1}^{q}
\\\\ (1-a_j,\alpha_j)_{1}^{p} \end{matrix}\right.
\right].
\end{equation}
}\end{property}
\begin{property}{\rm
For $y>0$, there holds the relation
\begin{equation}\label{AppHpower}
  H_{p,q}^{m,n} \left[ z \left|
\begin{matrix} (a_i,\alpha_i)_{1}^{p} \\\\
(b_j,\beta_j)_{1}^{q}
\end{matrix}\right.\right] =
y H^{m,n}_{p,q} \left[ z^y \left|
\begin{matrix}
(a_i,k\alpha_i)_{1}^{p}\\\\
(b_j,k\beta_j)_{1}^{q}
\end{matrix}\right.\right].
\end{equation}
}
\end{property}
\begin{property}{\rm
For $\sigma\in\mathbb{C}$, there holds the relation
\begin{equation}\label{AppHshift}
  z^\sigma H_{p,q}^{m,n} \left[ z \left|
\begin{matrix} (a_i,\alpha_i)_{1}^{p} \\\\
(b_j,\beta_j)_{1}^{q}
\end{matrix}\right.\right] =
H^{m,n}_{p,q} \left[ z \left|
\begin{matrix}
(a_i+\sigma \alpha_i,\alpha_i)_{1}^{p}\\\\
(b_j+\sigma \beta_j,\beta_j)_{1}^{q}
\end{matrix}\right.\right].
\end{equation}
}
\end{property}
\begin{property}\label{AppHreduction}{\rm
For $a,b\in\mathbb{C}$, there holds the relations
\begin{equation}\label{AppHzero1}
  H_{p,q}^{m,n} \left[ z \left|
\begin{matrix} (a,0),(a_i,\alpha_i)_{2}^{p} \\\\
(b_j,\beta_j)_{1}^{q}
\end{matrix}\right.\right] =
\Gamma(1-a) H^{m,n-1}_{p-1,q} \left[ z \left|
\begin{matrix}
(a_i,\alpha_i)_{2}^{p}\\\\
(b_j,\beta_j)_{1}^{q}
\end{matrix}\right.\right]
\end{equation}
when $\mbox{Re}(1-a) > 0$ and $n \geq 1$;
\begin{equation}\label{AppHzero2}
  H_{p,q}^{m,n} \left[ z \left|
\begin{matrix} (a_i,\alpha_i)_{1}^{p} \\\\
(b,0),(b_j,\beta_j)_{2}^{q}
\end{matrix}\right.\right] =
\Gamma(b) H^{m-1,n}_{p,q-1} \left[ z \left|
\begin{matrix}
(a_i,\alpha_i)_{1}^{p}\\\\
(b_j,\beta_j)_{2}^{q}
\end{matrix}\right.\right]
\end{equation}
when $\mbox{Re}(b) > 0$ and $m \geq 1$;
\begin{equation}\label{AppHzero3}
  H_{p,q}^{m,n} \left[ z \left|
\begin{matrix} (a_i,\alpha_i)_{1}^{p-1},(a,0) \\\\
(b_j,\beta_j)_{1}^{q}
\end{matrix}\right.\right] =
\frac{1}{\Gamma(a)} H^{m,n}_{p-1,q} \left[ z \left|
\begin{matrix}
(a_i,\alpha_i)_{1}^{p-1}\\\\
(b_j,\beta_j)_{1}^{q}
\end{matrix}\right.\right]
\end{equation}
when $\mbox{Re}(a) > 0$ and $p> n$; and
\begin{equation}\label{AppHzero4}
  H_{p,q}^{m,n} \left[ z \left|
\begin{matrix} (a_i,\alpha_i)_{1}^{p} \\\\
(b_j,\beta_j)_{1}^{q-1},(b,0)
\end{matrix}\right.\right] =
\frac{1}{\Gamma(1-b)} H^{m,n}_{p,q-1} \left[ z \left|
\begin{matrix}
(a_i,\alpha_i)_{1}^{p}\\\\
(b_j,\beta_j)_{1}^{q-1}
\end{matrix}\right.\right]
\end{equation}
when $\mbox{Re}(1-b) > 0$ and $q > m$.}
\end{property}
\begin{property}
The following list shows how it is possible to express some
mathematical functions in terms of the $H$-function:
\begin{eqnarray}
e^z &=& H_{0,1}^{1,0} \left[  -z\left|
\begin{matrix} \overline{\hspace*{0.2in}} \\\\
(0,1)\end{matrix}\right.\right], \qquad \forall
z\nonumber\\
\cos z &=& \frac{1}{\sqrt{\pi}} \,H_{0,2}^{1,0} \left[
\frac{z^2}{4}\left|
\begin{matrix} \overline{\hspace*{0.2in}}\\\\
(0,1),(\frac{1}{2},1)\end{matrix}\right.\right] , \qquad \forall z\nonumber\\
\sin z &=& \frac{2}{\sqrt{\pi}} \,H_{0,2}^{1,0} \left[
\frac{z^2}{4}\left|
\begin{matrix} \overline{\hspace*{0.2in}}\\\\
(0,1),(-\frac{1}{2},1)\end{matrix}\right.\right] , \qquad z \geq 0\nonumber\\
\ln (1+z) &=& H_{2,2}^{1,0} \left[ z\left|
\begin{matrix} (1,1),(1,1)\\\\
(1,1), (0,1)\end{matrix}\right.\right] , \qquad |z|<1\nonumber\\
K_\eta (z) &=& \frac{1}{2} \left(\frac{x}{2}\right)^{-a}
H_{0,2}^{1,0} \left[ \frac{z^2}{2}\left|
\begin{matrix} \overline{\hspace*{0.5in}}\\\\
(\frac{a-\eta}{2},1), (\frac{a+\eta}{2},1)
\end{matrix}\right.\right] , \qquad \forall z\label{AppBessel}
\end{eqnarray}
where the last one is the modified Bessel function of the third
kind.
\end{property}
\begin{property}
The hypergeometric function, when $p \leq q$ or $p=q+1$ with
$0<|z|<1$, can always be expressed in terms of the $H$-function:
\begin{equation}\label{AppGhyper}
_{p}F_{q} \left( a_1,\ldots,a_p; b_1,\ldots,b_q;z \right) =
\frac{\prod_{i = 1}^{p} \Gamma(a_i)}{\prod_{j = 1}^{q}
\Gamma(b_j)} \,H_{p,q+1}^{1,p} \left(-z\left|\begin{matrix}
(1-a_i,1)_{1}^{p}
\\\\ (0,1),(1-b_j,1)_{1}^{q} \end{matrix}\right.\right).
\end{equation}
\end{property}
Refer to~\cite{Kilbas04} for a more exhaustive list of
relationships between the $H$-function and some other mathematical
functions. With $\alpha_i=\beta_j=1$, $i=1,\ldots,p;j=1,\ldots,q$
all the above properties for $H$-functions except
Property~\ref{AppHreduction} yield corresponding properties for
$G$-functions due to the relationship~(\ref{AppHG}). Before
presenting two results from Prudnikov, Brychkov and
Marichev~\cite[P.346,368]{Prudnikov90} concerning calculations of
integrals involving elementary functions and one or two
$G$-functions that are used in this article, we define some more
notation as follows.
\begin{eqnarray}
&m,n,p,q,u,v,w,x=0,1,2,\ldots; r,s=1,2,3,\ldots~(r,s\mbox{ are coprime});\nonumber\\
&0\leq m\leq q,\quad 0\leq n \leq p,\quad 0\leq u\leq x,\quad
0\leq v\leq
w;\nonumber\\
&b^{\ast} = u+v-\dfrac{w+x}{2},\quad c^{\ast} =
m+n-\dfrac{p+q}{2};\label{Gnotation2}\\
&\mu = \displaystyle{\sum_{j=1}^q b_j-\sum_{i=1}^p a_i +
\frac{p-q}{2}+1,\quad\rho = \sum_{j=1}^x d_j-\sum_{i=1}^w
c_i + \frac{w-x}{2}+1};\nonumber\\
&\alpha,\sigma,\xi\in \mathbb{C};\quad \sigma,\xi\neq 0.\nonumber
\end{eqnarray}
For any integer $\ell>0$, define
\begin{equation}\label{Delta}
\Delta(\ell,a) = \left(\frac{a}{\ell}, \frac{a+1}{\ell},\ldots,
\frac{a+\ell-1}{\ell}\right),
\end{equation}
and
\begin{eqnarray}
&&\Delta(\ell,(a_i)_{1}^{p}) =
\left(\frac{a_1}{\ell},\frac{a_2}{\ell},\ldots,\frac{a_p}{\ell},
\frac{a_1+1}{\ell},\frac{a_2+1}{\ell},\ldots,\frac{a_p+1}{\ell},\ldots,\right.\nonumber\\
&&\hspace*{2in}\left.
\frac{a_1+\ell-1}{\ell},\frac{a_2+\ell-1}{\ell},\ldots,\frac{a_p+\ell-1}{\ell}
\right).\label{Delta2}
\end{eqnarray}

\begin{thm}[Prudnikov, Brychkov and
Marichev~\cite{Prudnikov90}] \label{AppG2Gthm}Define notation
in~(\ref{Gnotation2}). If one of the 35 conditions stated
in~\cite[P.346]{Prudnikov90} holds, there holds the relation,
\begin{eqnarray}
&&\int_0^\infty z^{\alpha-1}
   G^{u,v}_{w,x}\left(  \sigma z \left| \begin{matrix} (c_i)_{1}^{w} \\\\
(d_i)_{1}^{x}\end{matrix}\right.\right)
   G_{p,q}^{m,n}\left(  \xi z^{s/r}\left| \begin{matrix} (a_i)_{1}^{p} \\\\
(b_i)_{1}^{q}\end{matrix}\right.\right)
dz\nonumber\\
&& \hspace*{0.5in}=\frac{r^\mu s^{\rho+\alpha(x-w)-1}
\sigma^{-\alpha}}{(2\pi)^{b^{\ast}(s-1)+c^{\ast}(r-1)}}
   G_{rp+sx,rq+sw}^{rm+sv,sn+su}\left(
   \frac{\xi^r r^{r(p-q)}}{\sigma^s s^{s(w-x)}} \left|
   \begin{matrix} (A_i)_{1}^{rp+sx} \\\\
(B_j)_{1}^{rq+sw}\end{matrix}\right.\right)\label{AppG2GthmE}
\end{eqnarray}
where
\begin{eqnarray*}
(A_i)_{1}^{rp+sx}&=&(\Delta(r,a_1),\ldots,\Delta(r,a_n),
\Delta(s,1-\alpha-d_1),\ldots,\\
&&\qquad \qquad\Delta(s,1-\alpha-d_x),
\Delta(r,a_{n+1}),\ldots,\Delta(r,a_p))
\end{eqnarray*}
and
\begin{eqnarray*}
(B_j)_{1}^{rq+sw}&=&(\Delta(r,b_1),\ldots,\Delta(r,b_m),
\Delta(s,1-\alpha-c_1),\ldots, \\
&&\qquad \qquad \Delta(s,1-\alpha-c_w),
\Delta(r,b_{m+1}),\ldots,\Delta(r,b_q)).
\end{eqnarray*}
\end{thm}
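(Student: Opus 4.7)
The plan is to prove this classical Prudnikov--Brychkov--Marichev table integral by the standard Mellin--Barnes contour method. I would begin by replacing each $G$-function in the integrand with its defining contour integral from Definition~\ref{AppGdefin}, so that the left-hand side becomes a triple integral: two contour integrals over complex variables $t,\tau$ and the original $z$-integral on $(0,\infty)$. Swapping the order of integration --- legitimate under the convergence hypothesis encoded in the 35 admissibility cases quoted from Prudnikov--Brychkov--Marichev --- leaves a single $z$-integral of a pure power $z^{\alpha-1-t-(s/r)\tau}$, which is the Mellin kernel. Recognizing this as the Mellin transform of $G^{u,v}_{w,x}(\sigma z)$ evaluated at exponent $\alpha-(s/r)\tau$, and collapsing the $t$-contour into the closed-form ratio of gammas of $\alpha-(s/r)\tau$ that the Mellin--Barnes representation encodes, contracts the triple integral to a single contour integral in $\tau$.

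At this stage the integrand is a product and quotient of gamma factors whose arguments are linear in $\tau$ but with two different coefficients, namely $1$ and $s/r$. To cast this into the canonical Mellin--Barnes shape of a single $G$-function --- in which every gamma argument must be linear in one variable with a common coefficient --- I would perform a change of variable $\tau \mapsto r\tau'/s$ so the two coefficients become $r/s$ and $1$, and then apply the Gauss multiplication formula
\begin{equation*}
\Gamma(nz) \;=\; (2\pi)^{(1-n)/2} \, n^{nz-1/2} \, \prod_{k=0}^{n-1}\Gamma\!\left(z+\tfrac{k}{n}\right)
\end{equation*}
with $n=r$ on the gammas of argument $\cdot + r\tau'/s$ and with $n=s$ on the gammas of argument $\cdot + \tau'$. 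Each application splits a single gamma into $r$ or $s$ shifted copies, and these shifts are exactly the data encoded by the vectors $\Delta(r,\cdot)$ and $\Delta(s,\cdot)$ of~(\ref{Delta2}). Sorting the resulting gammas into the appropriate numerator/denominator slots of a Mellin--Barnes representation yields precisely the parameter vectors $(A_i)_1^{rp+sx}$ and $(B_j)_1^{rq+sw}$ with the claimed counts $rm+sv$ and $sn+su$ of "up" and "in" factors.

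What remains is pure bookkeeping of constants. The prefactor $r^{\mu}s^{\rho+\alpha(x-w)-1}\sigma^{-\alpha}(2\pi)^{-b^{\ast}(s-1)-c^{\ast}(r-1)}$ should arise by collecting three sources: (i) the $\sigma^{-\alpha}$ from the Mellin-transform collapse; (ii) every $n^{nz-1/2}$ factor produced by the multiplication formula, whose $z$-dependent pieces combine with the final change-of-variable to reconstitute the argument $\xi^{r}r^{r(p-q)}/(\sigma^{s}s^{s(w-x)})$ of the output $G$-function, while the $z$-independent pieces contribute the $r^{\mu}$ and $s^{\rho+\alpha(x-w)-1}$ on using the definitions of $\mu$ and $\rho$ in~(\ref{Gnotation2}); (iii) the $(2\pi)^{(1-n)/2}$ factors, whose total exponent reduces to $-b^{\ast}(s-1)-c^{\ast}(r-1)$ by the definitions of $b^{\ast}$ and $c^{\ast}$. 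Reading off the resulting Mellin--Barnes integrand as a $G$-function in the variable $\xi^{r}r^{r(p-q)}/(\sigma^{s}s^{s(w-x)})$ then completes the derivation.

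The main obstacle is not creativity but sheer bookkeeping, together with the justification of each analytic step under the convergence hypothesis. The algebraic identity itself is identical across all 35 admissibility cases; what varies is the precise choice of contour $\mathfrak{L}$ --- one of $\mathfrak{L}_{-\infty}, \mathfrak{L}_{+\infty}, \mathfrak{L}_{i\gamma\infty}$ from Definition~\ref{AppHdefin}, appropriately specialized to the $G$-function --- and the growth estimates on the gamma products that are needed to apply Fubini and to deform contours. Matching the 35 cases to valid contour choices, and carefully tracking the orientations so that the $rm+sv$ and $sn+su$ counts come out in the stated order rather than a permuted one, is where most of the work lies; each individual piece is classical, and once the Mellin-transform framework is set up the remainder proceeds by Gauss multiplication and algebra.
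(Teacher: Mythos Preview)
The paper does not prove this theorem at all: it is quoted verbatim from Prudnikov--Brychkov--Marichev~\cite{Prudnikov90} as a reference result in the appendix, with no accompanying argument. So there is no ``paper's own proof'' to compare against.

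That said, your outline is the standard derivation and is essentially correct. Replacing each $G$-function by its Mellin--Barnes contour integral, collapsing the $z$-integral via the Mellin transform, and then invoking the Gauss multiplication formula to split the gamma factors into $\Delta(r,\cdot)$ and $\Delta(s,\cdot)$ blocks is exactly how such table entries are established; the $(2\pi)$, $r^{\mu}$, $s^{\rho+\alpha(x-w)-1}$ constants do indeed fall out of the multiplication-formula prefactors as you describe. One small point worth tightening: the intermediate step is more cleanly phrased not as ``the Mellin transform of $G^{u,v}_{w,x}(\sigma z)$'' but as recognizing that the inner $z$-integral $\int_0^\infty z^{\alpha-1-t-(s/r)\tau}\,dz$ is itself divergent in the ordinary sense --- what one actually does is keep one $G$-function as a contour integral and integrate the other against $z^{\alpha-1-(s/r)\tau}$ termwise, using the known Mellin transform of a single $G$-function (which is just the ratio of gammas defining it). Apart from that phrasing, and the acknowledged bookkeeping of the 35 convergence cases, the plan is sound.
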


\begin{cor}[Prudnikov, Brychkov and
Marichev~\cite{Prudnikov90}]\label{AppGcor1} When $\sigma=y^{-1}$,
$w,x,u=1$, $v=d_1=0$, $c_1=\beta$ in
Theorem~\ref{AppG2Gthm},~(\ref{AppG2GthmE}) reduces to
\begin{eqnarray*}
&&\hspace*{-0.2in}\int_0^y z^{\alpha-1} (y-z)^{\beta-1}
G_{p,q}^{m,n}
\left(  \xi z^{s/r}\left| \begin{matrix} (a_i)_{1}^{p} \\\\
(b_i)_{1}^{q}\end{matrix}\right.\right)
dz\nonumber\\
&& =\frac{r^\mu s^{-\beta} \Gamma(\beta)}
{(2\pi)^{c^{\ast}(r-1)}y^{1-\alpha-\beta}}
G^{rm,rn+s}_{rp+s,rq+s}\left( \frac{\xi^r a^{s}}{r^{r(q-p)}}
\left|\begin{matrix} \Delta(s,1-\alpha),\Delta(r,(a_i)_{1}^{p})\\ \\
\Delta(r,(b_j)_{1}^{q}),\Delta(s,1-\alpha-\beta)
\end{matrix}\right.\right).\label{AppG2Gcor1E}
\end{eqnarray*}
\end{cor}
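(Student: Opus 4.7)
The plan is to derive the corollary as a direct specialization of Theorem \ref{AppG2Gthm}, where the inner $G$-function is chosen so that it collapses to the factor $(y-z)^{\beta-1}$ truncated on $(0,y)$. Concretely, I would use the elementary Mellin--Barnes evaluation
$$
G^{1,0}_{1,1}\!\left(z\left|\begin{matrix}\beta\\ 0\end{matrix}\right.\right)=\frac{(1-z)^{\beta-1}}{\Gamma(\beta)}\indic_{\{0<z<1\}},
$$
which is the $G$-function incarnation of the Beta integrand. Setting $\sigma=1/y$ then yields $(y-z)^{\beta-1}=\Gamma(\beta)\,y^{\beta-1}\,G^{1,0}_{1,1}(z/y\mid \beta;0)$, and the indicator support allows the integration to be extended from $(0,y)$ to $(0,\infty)$ without changing its value. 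This reduces the left-hand side of the corollary to exactly the form of the integral on the left of (\ref{AppG2GthmE}).

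Next, I would plug $u=1$, $v=0$, $w=x=1$, $c_1=\beta$, $d_1=0$ into the parameter dictionary (\ref{Gnotation2}). A direct calculation gives $\rho=d_1-c_1+(w-x)/2+1=1-\beta$ and $b^{\ast}=u+v-(w+x)/2=0$, so the prefactor $(2\pi)^{b^{\ast}(s-1)+c^{\ast}(r-1)}$ degenerates to $(2\pi)^{c^{\ast}(r-1)}$. The $s$-exponent in the prefactor collapses to $\rho+\alpha(x-w)-1=-\beta$, and $\sigma^{-\alpha}=y^{\alpha}$. After folding in the $\Gamma(\beta)\,y^{\beta-1}$ pulled out in Step 1, the full scalar becomes $\dfrac{r^{\mu}s^{-\beta}\Gamma(\beta)}{(2\pi)^{c^{\ast}(r-1)}}\,y^{\alpha+\beta-1}$, matching the stated prefactor.

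For the resulting $G$-function, I would specialize the concatenated parameter lists $(A_i)_1^{rp+sx}$ and $(B_j)_1^{rq+sw}$ from Theorem \ref{AppG2Gthm}. With $x=w=1$, only one $\Delta(s,\cdot)$ block enters each row: $\Delta(s,1-\alpha-d_1)=\Delta(s,1-\alpha)$ inserted among the $\Delta(r,a_i)$ upstairs, and $\Delta(s,1-\alpha-c_1)=\Delta(s,1-\alpha-\beta)$ inserted among the $\Delta(r,b_j)$ downstairs. Invoking the invariance of the $G$-function under permutations of its upper parameter list and of its lower parameter list, I would reorder these blocks to place $\Delta(s,1-\alpha)$ first on top and $\Delta(s,1-\alpha-\beta)$ last on the bottom, yielding the signature $G^{rm,\,rn+s}_{rp+s,\,rq+s}$ and the argument $\xi^{r}y^{s}/r^{r(q-p)}$.

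The main obstacle I anticipate is bookkeeping: verifying that one of the 35 admissibility conditions listed in Prudnikov, Brychkov and Marichev actually holds in this specialization (so that Theorem \ref{AppG2Gthm} may be invoked), and matching the order of the concatenated $\Delta$-blocks with the order appearing in the statement of the corollary. Both are mechanical but error-prone; they should be handled by reading off the relevant condition from the $u=v=w=x=1$ row of the Prudnikov table and by a careful use of the permutation invariance within each parameter row of the $G$-function.
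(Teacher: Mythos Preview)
Your proposal is correct and matches the paper's approach exactly: the paper does not supply a separate proof but simply cites Prudnikov--Brychkov--Marichev and indicates in the statement itself that the result is the specialization of Theorem~\ref{AppG2Gthm} with $\sigma=y^{-1}$, $u=w=x=1$, $v=d_1=0$, $c_1=\beta$, which is precisely what you carry out. One small point of precision: the $G$-function is not invariant under arbitrary permutations of its full upper or lower parameter lists, only within each of the four subgroups (see Property~9.1 in the appendix); the block moves you describe do respect those subgroup boundaries, so your argument goes through, but you should phrase the permutation step accordingly.
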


\end{document}